\theoremstyle{plain}
\newtheorem{theorem}{Theorem}
\newtheorem{corollary}[theorem]{Corollary}
\newtheorem{lemma}[theorem]{Lemma}
\newtheorem{proposition}[theorem]{Proposition}
\newtheorem{question}[theorem]{Question}
\theoremstyle{definition}
\newtheorem{definition}[theorem]{Definition}
\newtheorem{notation}[theorem]{Notation}
\newtheorem{example}[theorem]{Example}
\theoremstyle{remark}
\newtheorem{hypothesis}[theorem]{Hypothesis}
\newtheorem{remark}[theorem]{Remark}
\numberwithin{theorem}{section}
\newenvironment{ii}
{ \begin{enumerate}}
{\end{enumerate}}
\newcommand{\Spec}[1]{\mathrm{Spec}(#1)}
\newcommand{\Add}{\mbox{\rm{Add\,}}}
\newcommand{\add}{\mbox{\rm{add\,}}}
\newcommand{\ext}{\mbox{\rm{ext\,}}}
\newcommand{\Img}{\mbox{\rm{Im\,}}}
\newcommand{\Hom}[3]{\mathrm{Hom}_{#1}(#2,#3)}
\newcommand{\Ext}[4]{\mathrm{Ext}^{#1}_{#2}(#3,#4)}
\newcommand{\Der}[1]{\mathbf{D}({#1})}
\newcommand{\Cpx}[1]{\mathbf{C}({#1})} 
\newcommand{\Dle}[1]{\mathbf{D}^{\le{#1}}}
\newcommand{\Dge}[1]{\mathbf{D}^{\ge{#1}}}
\newcommand{\Ho}[1]{\mathrm{Ho}({#1})}
\newcommand{\DifA}{\mathbf{Dif}\A}
\newcommand{\GraA}{\mathbf{G}\A}
\newcommand{\CpxA}{\mathbf{C}\A} 
\newcommand{\DerA}{\mathbf{D}\A}
\newcommand{\rfmod}[1]{\mathrm{mod}\textrm{-}{#1}}
\newcommand{\rmod}[1]{\mathrm{Mod}\textrm{-}{#1}}
\newcommand{\ModR}{\rmod R}
\newcommand{\ModRR}{\rmod\R}
\newcommand{\Filt}[1]{\mathrm{Filt}\textrm{-}{#1}}
\newcommand{\Ab}{\mathrm{Ab}}
\newcommand{\pd}[2]{\mathrm{proj.dim}_{#1}{#2}}
\newcommand{\Supp}[1]{\mbox{\rm{Supp}} \,#1}
\newcommand{\Ker}[1]{\mbox{\rm{Ker}}\,#1}
\newcommand{\Coker}{\mbox{\rm{Coker}}}
\newcommand{\inv}{^{-1}}
\newcommand{\op}{\mathrm{op}}
\DeclareMathOperator{\hocolim}{hocolim}
\newcommand{\p}{\mathfrak{p}}
\newcommand{\q}{\mathfrak{q}}
\newcommand{\unit}{\mathds{1}}
\newcommand{\bbD}{\mathbb{D}}
\newcommand{\Z}{\mathbb{Z}}
\newcommand{\A}{\mathcal{A}}
\newcommand{\B}{\mathcal{B}}
\newcommand{\C}{\mathcal{C}}
\newcommand{\E}{\mathcal{E}}
\newcommand{\F}{\mathcal{F}}
\newcommand{\G}{\mathcal{G}}
\newcommand{\I}{\mathcal{I}}
\newcommand{\K}{\mathcal{K}}
\newcommand{\clP}{\mathcal{P}}
\newcommand{\R}{\mathcal{R}}
\newcommand{\clS}{\mathcal{S}}
\newcommand{\T}{\mathcal{T}}
\newcommand{\U}{\mathcal{U}}
\newcommand{\V}{\mathcal{V}}
\newcommand{\W}{\mathcal{W}}
\newcommand{\X}{\mathcal{X}}
\newcommand{\Y}{\mathcal{Y}}
\newcommand{\RHom}[3]{\mbox{\rm{{\bf R}Hom}}_{#1}(#2,#3)}
\newcommand{\LOtimes}[3]{#2 \otimes_{#1}^{\mathbf{L}} #3}
\newcommand{\dd}{\colon}
\newcommand{\la}{\longrightarrow}
\renewcommand{\iff}{if and only if }
\newcommand{\st}{such that }
\newcommand{\wrt}{with respect to }
\begin{document}

\title[Compactly generated pairs and co-$t$-structures]%
{On compactly generated torsion pairs and the classification of co-$t$-structures for commutative noetherian rings}

\author{Jan \v{S}\v{t}ov\'\i\v{c}ek} 
\address{Charles University, Faculty of Mathematics and Physics, Department of Algebra \\
Sokolovsk\'{a} 83, 186 75 Prague 8, Czech Republic}
\email{stovicek@karlin.mff.cuni.cz}

\author{David Posp{\'\i}{\v s}il} 
\email{pospisil.david@gmail.com}

\date{\today}
\subjclass[2010]{Primary: 18E30, 13C05. Secondary: 18G55, 16E45, 18D10.}
\keywords{Commutative noetherian ring, co-t-structure, stable derivator, compactly generated Hom-orthogonal pair.}
\thanks{This research was supported by GA~\v{C}R P201/12/G028.}

\begin{abstract}
We classify compactly generated co-$t$-structures on the derived category of a commutative noetherian ring. In order to accomplish that, we develop a theory for compactly generated Hom-orthogonal pairs (also known as torsion pairs in the literature) in triangulated categories that resembles Bousfield localization theory. Finally, we show that the category of perfect complexes over a connected commutative noetherian ring admits only the trivial co-$t$-structures and (de)suspensions of the canonical co-$t$-structure and use this to describe all silting objects in the category.
\end{abstract}

\maketitle

\setcounter{tocdepth}{1}
\tableofcontents

\section*{Introduction}

The aim of this paper is to give a classification of compactly generated co-$t$-structures on the unbounded derived category of a commutative noetherian ring. Along the way, we have found it useful to develop an ``unstable'' analogue of Bousfield localization theory for triangulated categories which rendered the attempts to classify compactly generated $t$-structures on one hand and compactly generated co-$t$-structures on the other hand as exactly the same problem. Thus, we do not have to start from scratch, but we can use existing results on $t$-structures. In our case this refers to the work of Alonso, Jerem{\'{\i}}as and Saor{\'{\i}}n~\cite{AJS}.

\smallskip

Co-$t$-structures have been introduced independently by Pauksztello~\cite{Pauk08} and Bondarko~\cite{Bond10}. Pauksztello was motivated by the analogies between rational homotopy theory and homological methods in commutative algebra~\cite{AH86} and his goal was to find a version of results on $t$-structures from~\cite{HKM02} which would work for the derived categories of cochain dg algebras. Bondarko's motivation~\cite{BondSurvey,Bond10} stemmed from problems related to Voevodsky's motives~\cite{BV08}.

Since then the concept turned useful in other contexts. A bijection between bounded co-$t$-structures on a small triangulated category and so-called silting subcategories has been established in~\cite{Bond10,MSSS11}. Silting subcategories are important representation theoretic objects which originated in~\cite{KV88} and have been studied in connection with $t$-structures and various applications in~\cite{AI12,HKM02,KY12}. They also occurred in connection with constructing geometric invariants of triangulated categories~\cite{JoPa11} and in other situations; see~\cite{Pauk11}.

In order to understand co-$t$-structures better, it seems helpful to provide more examples or, as is our case, a classification for a large class of triangulated categories. In general one has to be careful since even in~$\Der\Ab$, the unbounded derived category of abelian groups, there exists a proper class of co-$t$-structures. Indeed, there is a proper class of complete cotorsion pairs in $\Ab$ by~\cite[Example 2.2.2]{GT} and it is not difficult to produce an injective correspondence assigning to each such pair a co-$t$-structure in $\Der\Ab$. Hence we restrict our efforts to the problem of classifying compactly generated co-$t$-structures.

One readily notices analogies with Bousfield localizations, for which the classification~\cite{Nee92} in the case of commutative noetherian rings has been available for two decades. A Bousfield localization of a triangulated category is simply a Verdier localization $q^*\dd \T \to \T/\A$ which admits a right adjoint $q_*\dd \T/\A \to \T$. The important observation made by Bousfield and worked out abstractly for instance in~\cite{HPS97,Krause10,Nee01}, is that such a localization is fully described by the pair of classes $(\A,\B) = (\Ker q^*,\Img q_*)$ satisfying the following properties: $\A,\B$ are closed under suspensions and desuspensions, $\T(\A,\B) = 0$, and each $X \in \T$ admits an approximation triangle $A \to X \to B \to \Sigma A$. Such pairs are also called semi-orthogonal decompositions in the context of algebraic geometry~\cite{BoOr95}. Of special interest in our case are compactly generated (sometimes also called finite) localizations. We may start with a set of compact objects $\clS \subseteq \T$ in a triangulated category with coproducts and consider the Bousfield localization $q^*$ which is universal \wrt the property that $q^*|_\clS = 0$. It is well known in this case that:
\begin{ii}
\item The compact objects which are annihilated by $q^*$ are precisely those in the thick subcategory generated by $\clS$.
\item If $\T$ is nice enough, say well-generated in the sense of~\cite{Nee01}, we are in the recollement situation: $q_*$ has a right adjoint $q^!$ and the pair $(\B,\Y) = (\Img q_*,\Ker q^!)$ gives another Bousfield localization.
\end{ii}

A considerable part of the present paper is spent on proving analogs of these for torsion pairs or, as we call them, complete Hom-orthogonal pairs in triangulated categories. These are simply pairs $(\A,\B)$ \st $\T(\A,\B) = 0$ and each $X \in \T$ admits an approximation triangle $A \to X \to B \to \Sigma A$, dropping any assumption on $\A$ or $\B$ being closed under (de)suspensions. This makes sense since co-$t$-structures are special instances of such pairs, and our arguments would not become simpler if we specialized to co-$t$-structures. Interestingly, although the simple minded analogs of (i) and (ii) do hold under suitable and rather mild assumptions, the proofs become very technical.

Returning to our main classification goal, we prove that if $\T$ is at the base of a stable derivator and $(\A,\B)$ is generated by a set $\clS$ of compacts, then all other compacts in $\A$ are obtained from $\clS$ by taking extensions and summands. This generalizes (i) and implies among others that classifying compactly generated co-$t$-structures in $\T$ is the same as classifying right preaisles in the category of compact objects. Using an appropriate duality, we may classify left preaisles instead, which corresponds to the above mentioned classification of compactly generated $t$-structures.

Our generalization of (ii) works for all compactly generated algebraic triangulated categories and, although not directly related to the classification, may be of its own interest. It says that a compactly generated Hom-orthogonal pair $(\A,\B)$ always has a right adjacent complete Hom-orthogonal pair $(\B,\Y)$. As recollements proved useful in algebraic geometry and elsewhere, also adjacent pairs of $t$-structures and co-$t$-structures have their significance in studying weight filtrations of functors~\cite{BondSurvey,Bond10}.

\smallskip
 
We conclude the introduction by briefly outlining how the paper is organized. In Section~\ref{sec:approx} we recall necessary results from the approximation theory and the theory of cotorsion pairs for exact categories with transfinite compositions of inflations. We use this later for proving the generalization of the recollement situation. In Section~\ref{sec:triang} we discuss technical conditions on triangulated categories. In the first part we prove some basic results on stable derivators which we have not been able to find in the literature. We show that diagrams of global bicartesian squares are homotopy cartesian and establish a weak exactness of countable homotopy colimits in the spirit of~\cite{KeNi11}. In the second part we provide some details on models for compactly generated algebraic triangulated categories which we need later. In Section~\ref{sec:comp_gen} we prove the above mentioned generalizations of (i) and (ii) for compactly generated Hom-orthogonal pairs. In Section~\ref{sec:co-t-str} we obtain the classification of compactly generated co-$t$-structures on the derived category of a commutative noetherian ring and finally in Section~\ref{sec:compact} we determine which of these restrict to co-$t$-structures on the category of perfect complexes.

\section{Approximations and cotorsion pairs}
\label{sec:approx}

\subsection{Approximations}
\label{subsec:approx}

We start with an abstract general concept of approximations.

\begin{definition} \label{def:approx}
Let $\G$ be a category and $\F \subseteq \G$ a full subcategory. A morphism $f\dd F \to X$ in $\G$ with $F \in \F$ is called a \emph{right $\F$-approximation} of $X$ if any other morphism $f'\dd F' \to X$ with $F' \in \F$ factors through $f$. Dually, $g\dd X \to F$ with $F \in \F$ is called a \emph{left $\F$-approximation} of $X$ if every $g'\dd X \to F'$ with $F' \in \F$ factors through $g$.
\end{definition}

\subsection{Exact categories}
\label{subsec:exact}

In order to present existence theorems for left or right approximations, we restrict to the case where $\G$ is an exact category. The concept is originally due to Quillen, but the common reference for a simple axiomatic description is~\cite[Appendix A]{Keller90} and an extensive treatment is given in~\cite{Bueh10}.

An \emph{exact category} is an additive category $\E$ together with a distinguished class of diagrams of the form
\[ 0 \la X \overset{i}\la Y \overset{d}\la Z \la 0, \]
called \emph{conflations}, satisfying a certain collection of axioms which makes conflations behave similar to short exact sequences in an abelian category and allows to define Yoneda Ext groups with usual properties.

Adopting the terminology from~\cite{Keller90}, the second map in a conflation (denoted above by $i$) is called \emph{inflation}, while the third map (denoted by $d$) is referred to as \emph{deflation}.

All what we need to know about exact categories for the purpose of this text is summarized in the following proposition:

\begin{proposition} \label{prop:exact_categories} \cite{Bueh10,Keller90} ~
\begin{ii}
\item Let $\A$ be an abelian category and $\G \subseteq \A$ be an extension closed subcategory. Then $\G$, considered together with all short exact sequences in $\A$ whose all terms belong to $\G$, is an exact category.
\item Every small exact category arises up to equivalence as in (i).
\end{ii}
\end{proposition}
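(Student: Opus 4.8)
The plan is to prove part (i) first, since part (ii) is a consequence of a well-known embedding theorem once we identify the Yoneda Ext structure correctly. For part (i), let $\A$ be an abelian category and $\G \subseteq \A$ an extension-closed full subcategory (so $\G$ is closed under isomorphisms and, by extension-closedness applied to split extensions, under finite direct sums; hence $\G$ is additive). I would take as the distinguished class of conflations all sequences $0 \to X \to Y \to Z \to 0$ that are short exact in $\A$ and have all three terms in $\G$. The task is then to verify Quillen's axioms (in the streamlined form of~\cite{Keller90,Bueh10}): the class of conflations is closed under isomorphism and contains the split exact sequences on objects of $\G$; inflations and deflations are closed under composition; and pushouts of inflations along arbitrary maps in $\G$, respectively pullbacks of deflations, exist in $\G$ and again yield conflations. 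The only axiom with real content is the last one: given an inflation $i\dd X \rightarrowtail Y$ in $\G$ and a map $f\dd X \to X'$ with $X' \in \G$, form the pushout $Y' = (Y \oplus X')/\{(ix,-fx)\}$ in $\A$; one checks in $\A$ that $X' \rightarrowtail Y'$ is monic with cokernel $Z = \operatorname{Coker} i \in \G$, so $Y'$ sits in a short exact sequence $0 \to X' \to Y' \to Z \to 0$ with outer terms in $\G$, and extension-closedness of $\G$ forces $Y' \in \G$; thus the pushout is computed in $\G$ and is a conflation. The pullback statement is dual. The closure of inflations under composition is the "obscure axiom", which follows formally from the pushout axiom (see~\cite{Bueh10}), or can be checked directly by a diagram chase in $\A$. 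Finally, one notes that the Yoneda Ext groups computed in the exact category $\G$ agree with $\Ext{1}{\A}{-}{-}$ restricted to $\G$, because an extension of $Z$ by $X$ with middle term a priori in $\A$ automatically has middle term in $\G$ by extension-closedness.

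For part (ii), let $\E$ be a small exact category. The strategy is the classical one: embed $\E$ into the category $\A = \operatorname{Lex}(\E^{\op},\Ab)$ of left-exact (contravariant, additive) functors $\E \to \Ab$ via the Yoneda functor $h\dd \E \to \A$, $X \mapsto \E(-,X)$, where "left exact" means sending conflations of $\E$ to left-exact sequences of abelian groups. One shows that $\A$ is a (Grothendieck) abelian category, that $h$ is fully faithful, exact (sends conflations to short exact sequences in $\A$) and reflects exactness, and that its essential image is an extension-closed subcategory of $\A$. This is exactly the content of~\cite[App.~A]{Keller90} and~\cite[§A]{Bueh10}, which is the reference already cited in the statement, so I would invoke it directly rather than reprove it. Composing with part (i) then exhibits $\E$ as an extension-closed subcategory of an abelian category with the induced exact structure, which is what is asserted.

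I expect the main obstacle to be purely bookkeeping: verifying carefully that the pushout (resp.\ pullback) constructed in the ambient abelian category $\A$ really does land in $\G$ and produces a short exact sequence with all terms in $\G$, so that extension-closedness can be applied, and then that the resulting diagram satisfies the precise form of the pushout-of-inflation axiom (in particular that the induced square is bicartesian in $\G$, not merely in $\A$). None of this is deep, but it requires being attentive to which axiomatization of exact categories one uses and to the compatibility of Ext computed in $\G$ versus in $\A$. Since the statement is quoted verbatim from the literature, I would keep the write-up brief, spell out the pushout axiom as the representative case, and refer to~\cite{Keller90,Bueh10} for the remaining verifications and for part (ii).
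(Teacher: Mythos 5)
The paper states this proposition without proof, citing \cite{Bueh10,Keller90}; your proposal reproduces the standard argument from exactly those references — checking Quillen's axioms directly for (i), with the pushout/pullback axiom as the crux and extension-closedness forcing the pushout to land in $\G$, and the Gabriel--Quillen embedding into $\operatorname{Lex}(\E^\op,\Ab)$ for (ii). Your sketch is correct and matches the intended approach; one small clarification is that in the axiomatization of~\cite{Bueh10,Keller90} bicartesianness of the pushout square is a consequence, not an axiom, so there is nothing extra to verify beyond the inflation staying an inflation.
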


In our situation, we are concerned with exact categories which are not small, but still arise as in Proposition~\ref{prop:exact_categories}(i). In fact, one easily sees that Proposition~\ref{prop:exact_categories}(i) holds in the following more general form:

\begin{lemma} \label{lem:ext_subcat} \cite[Lemma 10.20]{Bueh10}
Let $\G$ be an exact category and $\F$ be a full subcategory which is closed under extensions. Then $\F$ together with all conflations in $\G$ whose all terms belong to $\F$, is an exact category.
\end{lemma}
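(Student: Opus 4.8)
This is \cite[Lemma~10.20]{Bueh10}; let me sketch how I would prove it. There are two natural routes. One is to reduce to Proposition~\ref{prop:exact_categories}(i) through the Gabriel--Quillen embedding $\G \hookrightarrow \A$ of $\G$ into a (possibly large) abelian category $\A$ as a subcategory closed under extensions, in which a sequence of $\G$ is a conflation exactly when it becomes short exact in $\A$; then $\F$ is closed under extensions in $\A$ as well, and the exact structure it inherits from $\A$ agrees with the proposed one, since the embedding reflects short exact sequences. The other, more self-contained route --- the one I would write out --- is to verify Quillen's axioms for $\F$ directly, using only that $\F$ is full in $\G$ and closed under extensions.

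One begins with the easy observations: $\F$ is an additive subcategory of $\G$ (closure under extensions applied to split conflations $X \la X \oplus Z \la Z$ gives closure under finite direct sums, and fullness transports the additive structure); since $\F$ is full, each conflation $X \overset{i}\la Y \overset{d}\la Z$ of $\G$ with all terms in $\F$ remains a kernel--cokernel pair in $\F$, so the proposed conflations do form such a class, evidently closed under isomorphisms, and $0 \la 0 \la 0$ witnesses that $\mathrm{id}_0$ is both an inflation and a deflation in $\F$. For the pullback axiom, take a deflation $d \dd Y \to Z$ of $\F$ lying in a conflation with all terms in $\F$ and a morphism $f \dd Z' \to Z$ of $\F$, and form the pullback $Y'$ of $d$ along $f$ inside $\G$; by the exact structure of $\G$ the induced map $Y' \to Z'$ is a deflation with kernel $\Ker d$, so $\Ker d \rightarrowtail Y' \twoheadrightarrow Z'$ is a conflation of $\G$ with outer terms in $\F$. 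Closure under extensions forces $Y' \in \F$, and then fullness makes the square a pullback in $\F$, so $Y' \to Z'$ is a deflation of $\F$; dually, using pushouts and the conflation of $\G$ built from the pushout object, one gets the pushout axiom.

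The step that genuinely uses closure under extensions --- in my view the crux --- is stability of deflations (and dually inflations) under composition. Given deflations $Y \overset{d}\twoheadrightarrow Z$ and $Z \overset{d'}\twoheadrightarrow W$ of $\F$, arising from conflations $\Ker d \rightarrowtail Y \twoheadrightarrow Z$ and $\Ker{d'} \rightarrowtail Z \twoheadrightarrow W$ with all terms in $\F$, the composite $d'd$ is a deflation of $\G$, and the Noether isomorphism for exact categories --- realize $\Ker{(d'd)}$ as the pullback of $\Ker{d'} \rightarrowtail Z$ along $d$, see \cite{Bueh10} --- produces a conflation $\Ker d \rightarrowtail \Ker{(d'd)} \twoheadrightarrow \Ker{d'}$ in $\G$. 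Since $\Ker d, \Ker{d'} \in \F$, closure under extensions gives $\Ker{(d'd)} \in \F$, so $d'd$ lies in a conflation of $\G$ with all terms in $\F$ and is a deflation of $\F$. As Quillen's obscure axiom is automatic \cite{Bueh10}, this shows $\F$ together with the indicated conflations is an exact category.
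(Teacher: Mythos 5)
The paper offers no proof of its own here --- it simply cites B\"uhler's Lemma~10.20 --- so there is nothing in the source to compare against beyond that reference. Your direct verification of Quillen's axioms is correct and is essentially the standard argument (and the one B\"uhler gives): the only non-routine point is closure of deflations under composition, and you correctly identify that the conflation $\Ker d \rightarrowtail \Ker(d'd) \twoheadrightarrow \Ker d'$ (B\"uhler's Lemma~3.5) is where closure under extensions is genuinely needed, with the pullback/pushout axioms and the additive structure handled by the same mechanism plus fullness. Nothing is missing.
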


\subsection{Cotorsion pairs}
\label{subsec:cotorsion}

An important source of approximations in exact categories are so-called cotorsion pairs.

\begin{definition} \label{def:cotorsion}
A pair of subcategories $(\A,\B)$ of an exact category $\G$ is said to be a \emph{cotorsion pair} if
\begin{align*}
\A &= \{ A \in \G \mid \Ext1\G AB = 0 \textrm{ for each } B \in \B \}, \\
\B &= \{ B \in \G \mid \Ext1\G AB = 0 \textrm{ for each } A \in \A \}.
\end{align*}

A cotorsion pair $(\A,\B)$ is \emph{complete} if for each $X \in \G$ there exist conflations
\[
0 \to B_X \to A_X \overset{f_X}\to X \to 0
\quad \textrm{and} \quad
0 \to X \overset{g^X}\to B^X \to A^X \to 0
\]
\st $A_X, A^X \in \A$ and $B_X, B^X \in \B$. We will call such conflations \emph{approximation sequences} for $X$.

The cotorsion pair $(\A,\B)$ is \emph{functorially complete} if the approximation sequences can be made functorial in~$X$.
\end{definition}

Approximation sequences for given $X \in \G$ as above are almost never unique, and neither are their functorial versions. To justify the terminology, it is an easy observation that $f_X$ is a right $\A$-approximation of $X$, while $g^X$ is a left $\B$-approximation of $X$.

\subsection{Filtrations and deconstructibility}
\label{subsec:deconstr}

Before discussing how one obtains complete cotorsion pairs, we need to define the concepts of transfinite composition, filtration and deconstructibility.

\begin{definition} \label{def:t_compos} \cite[\S2.1.1]{Hov99}, \cite[\S10.2]{Hirsch03}
Let $\I$ be a class of morphisms in a category~$\G$.

\begin{enumerate}
\item If $\lambda$ is an ordinal number, a \emph{$\lambda$-sequence of maps in $\I$} is a well ordered direct system $(X_\alpha, i_{\alpha\beta} \mid \alpha < \beta < \lambda)$ in $\G$ indexed by $\lambda$ \st
\begin{ii}
  \item For each limit ordinal $\mu < \lambda$, the cocone $(X_\mu, i_{\alpha\mu} \mid \alpha<\mu)$ of the subsystem $(X_\alpha, i_{\alpha\beta} \mid \alpha < \beta < \mu)$ is a colimit cocone. In other words, $X_\mu \cong \varinjlim_{\alpha<\mu} X_\alpha$ canonically;

  \item The morphism $i_{\alpha,\alpha+1}\dd X_\alpha \to X_{\alpha+1}$ belongs to $\I$ for each $\alpha+1<\lambda$. 
\end{ii}

\item The \emph{composition} of a $\lambda$-sequence $(X_\alpha, i_{\alpha\beta} \mid \alpha < \beta < \lambda)$ is the colimit map $X \to \varinjlim_{\alpha<\lambda} X_\alpha$, if the colimit exists.

\item A \emph{transfinite composition} of maps in $\I$ is a map in $\G$ that is the composition of a $\lambda$-sequence of maps in $\I$.
\end{enumerate}
\end{definition}

Our definition of filtration generalizes the corresponding concept in module theory from~\cite[Definition 3.1.1]{GT}.

\begin{definition} \label{def:filtration} \cite[Definition 2.9]{SaSt11}
Let $\G$ be an exact category and $\clS$ be a class of objects of $\G$. An object $X \in \G$ is said to be \emph{$\clS$-filtered} if $0 \to X$ is a transfinite composition of inflations with cokernel in $\clS$. That is, we require the existence of a well-ordered direct system $(X_\alpha, i_{\alpha\beta} \mid \alpha < \beta \le \lambda)$ \st the following holds:
\begin{ii}
\item $X_0 = 0$ and $X_\lambda = X$,

\item $X_\mu \cong \varinjlim_{\alpha<\mu} X_\alpha$ canonically for each limit ordinal $\mu<\lambda$,

\item For each $\alpha+1 \le \lambda$, there is a conflation in $\G$ of the form
\[ 0 \la X_\alpha \overset{i_{\alpha,\alpha+1}}\la X_{\alpha+1} \la S_\alpha \la 0 \]
with $S_\alpha \in \clS$.
\end{ii}

The direct system $(X_\alpha, i_{\alpha\beta} \mid \alpha < \beta \le \lambda)$ is then called an \emph{$\clS$-filtration} of $X$. The class of all $\clS$-filtered objects in $\G$ is denoted by $\Filt\clS$.
\end{definition}

Finally we arrive at the concept of deconstructibility.

\begin{definition} \label{def:deconstructible}
Let $\G$ be an exact category and $\F \subseteq \G$ be a class of objects. Then $\F$ is called \emph{deconstructible} if there exists a set (not a proper class!) $\clS \subseteq \G$ of objects \st $\F = \Filt\clS$.
\end{definition}

At this point we need to impose more assumptions on our exact categories. Despite the fact that the definition above works for arbitrary exact categories, in order to construct complete cotorsion pairs we need to specialize.

\begin{hypothesis} \label{hyp:exact}
Let $\G$ be an exact category. We adopt the assumptions on $\G$ from~\cite[Setup 1.1]{SaSt11}:
\begin{ii}
\item Arbitrary transfinite compositions of inflations exist and are themselves inflations.
\item Every object of $\G$ is small relative to the class of all inflations in the sense of~\cite[Definition 2.1.3]{Hov99}, or equivalently of~\cite[Definition 10.4.1]{Hirsch03}. That is, for every object of $Y \in \G$, there exists a cardinal $\kappa = \kappa(Y)$ \st for every regular cardinal $\lambda \ge \kappa$ and every $\lambda$-sequence of inflations $(X_\alpha, i_{\alpha\beta} \mid \alpha < \beta < \lambda)$, the canonical morphism of abelian groups
\[ \varinjlim \G(Y,X_\alpha) \la \G(Y, \varinjlim X_\alpha) \]
is an isomorphism.
\end{ii}
\end{hypothesis}

Condition (i) above asserts a weak version of left exactness for colimits, and it implies the existence and exactness of coproducts; see~\cite[Lemma 1.4]{SaSt11}. Condition (ii) is a technical one and is needed for the small object argument as applied in~\cite[Proposition 2.1]{SaSt11}. We shall exhibit two broad classes of exact categories satisfying Hypothesis~\ref{hyp:exact} which are relevant for the present paper:

\begin{example} \label{expl:efficient-mod}
If $R$ is a unital and associative (but not necessarily commutative) ring, $\ModR$ will stand for the category of all right $R$-modules. Then $\ModR$ with the abelian exact structure (that is, conflations are precisely short exact sequences) meets Hypothesis~\ref{hyp:exact}(i) and~(ii).

More generally, given a small preadditive category $\R$, we will denote by $\ModRR$ the category of all right $\R$-modules. That is, the objects of $\ModRR$ are additive functors
\[ \R^\op \la \Ab \]
and the morphisms are natural transformations. Then $\ModRR$ satisfies Hypothesis~\ref{hyp:exact}.

In fact yet more generally, any Grothendieck category with the abelian exact structure satisfies Hypothesis~\ref{hyp:exact}.
\end{example}

\begin{example} \label{expl:efficient-cpx}
Let $\Cpx\ModRR$ be the category of cochain complexes with the componentwise split exact structure. Then $\Cpx\ModRR$ satisfies Hypothesis~\ref{hyp:exact}; see~\cite[Example 2.8(2)]{SaSt11}. A more general situation of this type involving dg modules over small dg categories will be discussed in Example~\ref{expl:dg_modules_ring} and Remark~\ref{rem:cofibrant}.
\end{example}

A few useful properties of deconstructible classes in exact categories under our hypotheses are summarized in the following lemma.

\begin{lemma} \label{lem:deconstr_basic}
Let $\G$ be an exact category satisfying Hypothesis~\ref{hyp:exact}. Then the following hold:

\begin{ii}
\item If $\F \subseteq \G$ is deconstructible, then $\F$ is closed under extensions and coproducts, so $\F$ is naturally an exact category in the sense of Lemma~\ref{lem:ext_subcat}. Moreover, $\F$ itself satisfies Hypothesis~\ref{hyp:exact} and the inclusion $\F \to \G$ preserves $\lambda$-sequences of inflations and their compositions.

\item Let $\G' \subseteq \G$ be an extension closed full subcategory with the induced exact structure. Suppose further that $\G'$ satisfies Hypothesis~\ref{hyp:exact} and the inclusion $\G' \to \G$ preserves $\lambda$-sequences of inflations and their compositions. Then $\F \subseteq \G'$ is deconstructible in $\G'$ \iff it is deconstructible in $\G$. 

\item Deconstructibility is transitive. That is, if $\F' \subseteq \F \subseteq \G$ \st $\F$ is deconstructible in $\G$ and $\F'$ is deconstructible in $\F$, then $\F'$ is deconstructible in $\G$.
\end{ii}
\end{lemma}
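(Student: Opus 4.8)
The plan is to reduce the three assertions to two facts about the operator $\Filt(-)$ which hold under Hypothesis~\ref{hyp:exact} and are established in~\cite{SaSt11}: an Eklof-type lemma, that $\Filt\clS$ is closed under extensions for an arbitrary class $\clS$, and transitivity of filtrations, that $\Filt(\Filt\clS) = \Filt\clS$; equivalently, an object filtered by $\clS$-filtered objects is itself $\clS$-filtered. (Both ultimately rest on the small object argument and the weak left-exactness of transfinite compositions of inflations built into Hypothesis~\ref{hyp:exact}.) I expect these to be the technical heart of the matter; everything below is bookkeeping around them, and the one delicate point is that Hypothesis~\ref{hyp:exact} together with the colimit-preservation property descends to deconstructible subcategories --- this is exactly what makes~(iii) work.

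For~(i), write $\F = \Filt\clS$ with $\clS$ a set. Closure under extensions is the Eklof-type lemma; closure under coproducts follows since $\bigoplus_{i\in I} X_i$ is the composition of the chain of split inflations $\bigoplus_{j<\gamma} X_j \hookrightarrow \bigoplus_{j\le\gamma} X_j$ (coproducts exist by Hypothesis~\ref{hyp:exact}(i)), whose successive cokernels are the $X_i \in \F$, so that $\bigoplus_i X_i$ is $\{X_i\}$-filtered and hence lies in $\Filt(\Filt\clS) = \F$. Being extension-closed, $\F$ is an exact category by Lemma~\ref{lem:ext_subcat}. The crucial sub-fact I would isolate is that $\F$ is also closed in $\G$ under transfinite compositions of inflations: given a $\lambda$-sequence of $\G$-inflations with all objects in $\F$, concatenating an $\clS$-filtration of the initial object $X_0$ with the given chain (whose successive cokernels lie in $\Filt\clS$) exhibits the colimit $X_\lambda := \varinjlim_\G X_\alpha$, and likewise $X_\lambda/X_0$, as objects filtered by $\clS$-filtered objects, hence in $\F$ by transitivity, so $0 \to X_0 \to X_\lambda \to X_\lambda/X_0 \to 0$ is a conflation in $\F$. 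A routine transfinite induction on the length then shows that every transfinite composition of inflations in $\F$ exists, is computed as in $\G$, and is an inflation in $\F$; thus $\F$ satisfies Hypothesis~\ref{hyp:exact}(i) and the inclusion preserves $\lambda$-sequences of inflations and their compositions. Hypothesis~\ref{hyp:exact}(ii) is inherited verbatim, with the same cardinal $\kappa(Y)$, since $\F(Y,-) = \G(Y,-)$ and the relevant colimits in $\F$ and in $\G$ coincide.

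For~(ii), I would prove the stronger statement that the class $\Filt\clS$ of $\clS$-filtered objects is the same whether computed in $\G'$ or in $\G$, for any set $\clS \subseteq \G'$. One inclusion is clear: an $\clS$-filtration in $\G'$ is one in $\G$, because inflations of $\G'$ are inflations of $\G$, cokernels are unchanged, and the inclusion preserves the colimits at limit ordinals by hypothesis. For the converse, given an $\clS$-filtration $(X_\alpha)$ in $\G$, a transfinite induction shows all $X_\alpha \in \G'$: the successor step uses that $\G'$ is extension-closed and $\clS \subseteq \G'$, while at a limit ordinal $\mu$ the chain $(X_\alpha)_{\alpha<\mu}$ consists of inflations of $\G'$, so its composition computed in $\G'$ exists by Hypothesis~\ref{hyp:exact}(i) for $\G'$ and is carried to $\varinjlim_\G X_\alpha = X_\mu$ by the preservation hypothesis. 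Hence every $\clS$-filtration in $\G$ already lives in $\G'$, the two classes agree, and deconstructibility in $\G'$ and in $\G$ are trivially equivalent.

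For~(iii), part~(i) shows that $\F$ is an exact category satisfying Hypothesis~\ref{hyp:exact}, extension-closed in $\G$, with the inclusion $\F \to \G$ preserving $\lambda$-sequences of inflations and their compositions --- precisely the hypotheses of~(ii) with $\G'$ replaced by $\F$. Since $\F' \subseteq \F$ is deconstructible in $\F$, part~(ii) then yields that $\F'$ is deconstructible in $\G$.
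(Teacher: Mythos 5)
Your proof is correct and follows essentially the same route as the paper: both rest on the transitivity $\Filt(\Filt\clS) = \Filt\clS$ from \cite[Corollary 2.11]{SaSt11}, the fact that the cokernel of a transfinite composition is filtered by the successive cokernels, and a transfinite induction to show that $\lambda$-sequences of inflations in $\F$ (note the successor maps must be inflations \emph{of $\F$}, i.e.\ with cokernel in $\F$, which is what makes your parenthetical claim about cokernels lying in $\Filt\clS$ true rather than an extra hypothesis) are already $\lambda$-sequences in $\G$ with composition landing in $\F$. The paper organizes the induction as a joint statement (a)/(b) rather than isolating your ``sub-fact'' first, but the mathematical content is the same, and parts (ii) and (iii) are handled identically.
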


\begin{proof}
(i) By~\cite[Corollary 2.11]{SaSt11}, any $\F$-filtered object belongs to $\F$. It follows trivially that $\F$ is closed under extensions and it is closed under coproducts by the proof of~\cite[Lemma 1.4(1)]{SaSt11}. Suppose now that we have a $\lambda$-sequence $(X_\alpha, i_{\alpha\beta} \mid \alpha < \beta < \lambda)$ of inflations in $\F$. We will prove the following jointly by induction on $\lambda>0$:
\begin{enumerate}
\item[(a)] $(X_\alpha, i_{\alpha\beta} \mid \alpha < \beta < \lambda)$ is a $\lambda$-sequence of inflations in $\G$.
\item[(b)] The composition $f\dd X_0 \to X$ of the $\lambda$-sequence in $\G$ is an  inflation with cokernel in $\F$.
\end{enumerate}
Clearly, (a) and (b) are true for $\lambda = 1$. Note also that if $\lambda$ is arbitrary and (b) holds, then $X \in \F$ since $\F$ is extension closed. In particular, $f\dd X_0 \to X$ is a composition of the $\lambda$-sequence in $\F$ and it is an inflation in $\F$.

Suppose now that we have proved (a) and (b) for all $\mu < \lambda$. Hence given any limit ordinal $\mu<\lambda$, then $(X_\alpha, i_{\alpha\beta} \mid \alpha < \beta < \mu)$ is a $\lambda$-sequence of inflations in $\G$ whose composition in $\G$ is the same as in $\F$: the map $i_{0\mu}\dd X_0 \to X_\mu$. Thus (a) follows for $\lambda$. Moreover, the cokernel of the composition $f\dd X_0 \to X$ of $(X_\alpha, i_{\alpha\beta} \mid \alpha < \beta < \lambda)$ in $\G$ is filtered by the cokernels of $i_{\alpha,\alpha+1}$; see~\cite[Lemma 3.5]{Bueh10}. Appealing to \cite[Corollary 2.11]{SaSt11} again, $\Coker f$ belongs to $\F$, which proves (b).

To summarize, we have proved that Hypothesis~\ref{hyp:exact}(i) holds for $\F$ and that $\F \to \G$ preserves $\lambda$-sequences of inflations and their compositions. Therefore, Hypothesis~\ref{hyp:exact}(ii) follows for $\F$ from the fact that it holds for $\G$.

(ii) Let $\clS$ be a set of objects in $\F$. Using a similar argument as in (i), one proves that $\F = \Filt\clS$ in $\G'$ \iff $\F = \Filt\clS$ in $\G$.

(iii) This is a straightforward consequence of~(i) and~(ii).
\end{proof}

We conclude our discussion of filtrations with an important property of deconstructible classes in Grothendieck categories, which we need later.

\begin{proposition} \label{prop:deconstr_intersect} \cite[Proposition 2.9]{Sto11}
Let $\G$ be a Grothendieck category, considered as an exact category with the abelian exact structure. Then:
\begin{ii}
\item The closure of a deconstructible class under direct summands is deconstructible.
\item The intersection $\bigcap_{i \in I} \F_i$ of a collection of deconstructible classes $(\F_i \mid i \in I)$, indexed by a set $I$, is deconstructible.
\end{ii}
\end{proposition}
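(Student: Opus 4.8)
The plan is to derive both parts from the Hill lemma for Grothendieck categories (see~\cite{Sto11}): for a regular cardinal $\kappa$, a set $\clS$ of $<\kappa$-presentable objects and an object $M$ with a fixed $\clS$-filtration, there is a set $\mathcal{H}$ of subobjects of $M$ that contains every member of the filtration, is closed under arbitrary sums and intersections, satisfies ``$P/N$ is $\clS$-filtered whenever $N\subseteq P$ lie in $\mathcal{H}$'', and has the density property that for each $N\in\mathcal{H}$ and each $<\kappa$-presentable $Z\subseteq M$ there is $P\in\mathcal{H}$ with $N+Z\subseteq P$ and $P/N$ again $<\kappa$-presentable (in fact $\clS$-filtered of length $<\kappa$). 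I also use repeatedly that a deconstructible class is closed under transfinite extensions by~\cite[Corollary 2.11]{SaSt11} (so $\Filt{(\Filt\clS)}=\Filt\clS$), that direct summands of $<\kappa$-presentable objects are again $<\kappa$-presentable, and that directed unions of subobjects commute with finite intersections in a Grothendieck category.

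For part~(i) write $\F=\Filt\clS$ with $\clS$ a set of $<\kappa$-presentable objects ($\kappa$ regular uncountable, large enough that the cardinal bookkeeping below goes through), let $\overline{\F}$ be the closure of $\F$ under direct summands and let $\U$ be a set of representatives of the $<\kappa$-presentable objects of $\overline{\F}$; the claim is $\overline{\F}=\Filt\U$. For ``$\supseteq$'' note that $\U\subseteq\overline{\F}$ and that $\overline{\F}$ is closed under transfinite extensions: given a filtration of $M$ with quotients $M_{\alpha+1}/M_\alpha\in\overline{\F}$, choose $C_\alpha$ with $M_{\alpha+1}/M_\alpha\oplus C_\alpha\in\F$ and interleave an $\clS$-filtration of each $M_{\alpha+1}/M_\alpha\oplus C_\alpha$ into the given one, exhibiting $M\oplus\bigoplus_\alpha C_\alpha$ as $\clS$-filtered, hence in $\F$; so $M\in\overline{\F}$. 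For ``$\subseteq$'' take $X\in\overline{\F}$ and $Y$ with $M:=X\oplus Y\in\F$, fix an $\clS$-filtration of $M$ and let $\mathcal{H}$ be as above. Recursively build a continuous increasing chain $(P_\beta)$ in $\mathcal{H}$ with $P_\beta=(P_\beta\cap X)\oplus(P_\beta\cap Y)$: at successors enlarge $P_\beta$ inside $\mathcal{H}$ by a $<\kappa$-presentable piece not in $P_\beta$ and then alternately enlarge further, staying in $\mathcal{H}$ and keeping the quotient over $P_\beta$ small, so as to absorb the images of the two projections $M\to X$, $M\to Y$; the union of this $\omega$-chain of enlargements is a member of $\mathcal{H}$ that splits compatibly with $X\oplus Y$, with still $<\kappa$-presentable quotient over $P_\beta$; take unions at limits. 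Then $X$ is filtered by the $P_\beta\cap X$, whose consecutive quotients are direct summands of the $<\kappa$-presentable $\clS$-filtered objects $P_{\beta+1}/P_\beta$, hence isomorphic to members of $\U$; so $X\in\Filt\U$.

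For part~(ii) write $\F_i=\Filt\clS_i$ and pick a regular cardinal $\kappa>|I|^+$ such that all objects of $\bigcup_{i\in I}\clS_i$ are $<\kappa$-presentable. Let $\U$ be a set of representatives of the $<\kappa$-presentable objects of $\F:=\bigcap_{i\in I}\F_i$; the claim is $\F=\Filt\U$. Each $\F_i$ is closed under transfinite extensions, hence so is $\F$, and $\U\subseteq\F$, giving $\Filt\U\subseteq\F$. Conversely, for $M\in\F$ fix for each $i$ an $\clS_i$-filtration with Hill family $\mathcal{H}_i$, and build a continuous increasing chain $(N_\alpha)$ with $N_\alpha\in\bigcap_i\mathcal{H}_i$ throughout and, whenever $N_\alpha\neq M$, with $N_{\alpha+1}\supsetneq N_\alpha$ and $N_{\alpha+1}/N_\alpha$ $<\kappa$-presentable. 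To produce $N_{\alpha+1}$ from $N_\alpha$ choose a $<\kappa$-presentable $Z\not\subseteq N_\alpha$ and run a back-and-forth of length $|I|^+$ that cycles cofinally often through every index, at each step applying the density clause of the Hill lemma for the relevant $\mathcal{H}_i$ (to the subobject $N_\alpha\in\mathcal{H}_i$ and a small generator of the current stage over $N_\alpha$) and taking unions at limits; since $\kappa$ is regular and $>|I|^+$ the result is a subobject $N_{\alpha+1}\in\bigcap_i\mathcal{H}_i$ containing $Z$ with $N_{\alpha+1}/N_\alpha$ $<\kappa$-presentable, and by the Hill lemma it is $\clS_i$-filtered for every $i$, hence lies in $\F$, hence is isomorphic to a member of $\U$. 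The chain exhausts $M$, so $M\in\Filt\U$.

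The technical weight sits entirely in the two recursions and the density clause feeding them. In~(i) the delicate point is that the projections $M\to X$, $M\to Y$ need not preserve membership in $\mathcal{H}$, so restoring the splitting requires the $\omega$-length alternation plus a check that its union genuinely splits as $(P\cap X)\oplus(P\cap Y)$; in~(ii) it is the length bound on the cycling recursion, which forces $\kappa>|I|^+$ and a cofinality argument showing that a $|I|^+$-indexed increasing union of members of the various $\mathcal{H}_i$ still lies in each $\mathcal{H}_i$. One must also dispatch the routine cardinal arithmetic that, for $\kappa$ regular and chosen large enough relative to the lengths involved, the quotients produced by these recursions remain $<\kappa$-presentable, so that they actually land in $\U$.
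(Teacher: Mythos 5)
Your proof is correct and follows essentially the same strategy as the cited reference \cite{Sto11}, which the paper invokes without reproducing the argument: both parts rest on the Hill lemma for Grothendieck categories, with (i) obtained by a Kaplansky-style back-and-forth that restores the direct-sum splitting inside the Hill family, and (ii) by a closing-off recursion landing simultaneously in all the families $\mathcal{H}_i$.
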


\subsection{Existence of complete cotorsion pairs}
\label{subsec:complete}

Now we have introduced all the terminology which is necessary to present a result on the existence and the structure of certain complete cotorsion pairs. The version here comes from~\cite[Corollary 2.15]{SaSt11}, but there exist several other versions under various assumptions in the literature. We say that a class $\F \subseteq \G$ of objects of an exact category is \emph{generating} if every object $X \in \G$ admits a deflation of the form $\coprod_{i \in I} F_i \to X$ with all $F_i \in \F$.

\begin{proposition} \label{prop:completeness} \cite[Corollary 2.15(2) and Remark 3.8]{SaSt11}
Let $\G$ be an exact category as in Hypothesis~\ref{hyp:exact}, and let $\clS \subseteq \G$ be a set (not a proper class!) of objects \st $\Filt\clS$ is generating. Denote
\begin{align*}
\B &= \{ B \in \G \mid \Ext1\G SB = 0 \textrm{ for each } S \in \clS \}, \\
\A &= \{ A \in \G \mid \Ext1\G AB = 0 \textrm{ for each } B \in \B \}.
\end{align*}
Then the following hold:

\begin{ii}
\item $(\A,\B)$ is a functorially complete cotorsion pair in $\G$.
\item $X \in \G$ belongs to $\A$ \iff $X$ is a summand of an $\clS$-filtered object.
\end{ii}
\end{proposition}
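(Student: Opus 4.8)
The plan is to prove Proposition~\ref{prop:completeness} by invoking the small object argument in the form already developed in~\cite{SaSt11}, so that essentially all the work reduces to verifying the hypotheses of \cite[Corollary 2.15]{SaSt11}. First I would observe that since $\Filt\clS$ is generating and $\clS$ is a set, the pair of classes $(\A,\B)$ defined by $\Ext1\G{-}{-}$-orthogonality with respect to $\clS$ is exactly the cotorsion pair cogenerated by $\clS$ in the sense of that reference; the equalities in Definition~\ref{def:cotorsion} that make $(\A,\B)$ a genuine cotorsion pair follow formally from the fact that $\B = \clS^{\perp_1}$ is defined by vanishing of $\Ext1\G{-}{-}$ against a set, together with the standard double-orthogonality argument. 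The key input from Hypothesis~\ref{hyp:exact} is that transfinite compositions of inflations exist and are inflations, and that every object is small relative to inflations, which is precisely what licenses the small object argument in the exact-categorical setting.

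Next I would carry out the small object argument. Starting from an arbitrary $X \in \G$, one uses the generating hypothesis to pick a deflation $\coprod_{i} F_i \to X$ with $F_i \in \Filt\clS \subseteq \A$; iterating the construction that ``kills'' the groups $\Ext1\G S{-}$ for $S \in \clS$ transfinitely, and invoking smallness to see that the process stabilizes, produces a conflation $0 \to B_X \to A_X \to X \to 0$ with $A_X$ an $\clS$-filtered object (hence in $\A$) and $B_X \in \clS^{\perp_1} = \B$. This gives the first approximation sequence. For the second sequence $0 \to X \to B^X \to A^X \to 0$, the standard trick applies: take the first approximation sequence for $B^X$ constructed so that one can splice, or more precisely apply the Salce-type argument — embed $X$ into an object of $\B$ using that $\B$ is (co)generating enough, then approximate; in the cited corollary this is all packaged, and functoriality is obtained because the small object argument is itself functorial in $X$. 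This yields part~(i).

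For part~(ii), the inclusion ``summand of an $\clS$-filtered object $\Rightarrow A$'' is immediate: $\clS$-filtered objects lie in $\A$ because $\Ext1\G{-}B$ vanishes on $\clS$ and $\Ext1\G{-}B$ sends conflations to long exact sequences and commutes with the relevant transfinite compositions (here one uses Hypothesis~\ref{hyp:exact} and the Eklof-type lemma, i.e.\ \cite[Corollary 2.11]{SaSt11}), and $\A$ is clearly closed under summands since it is an orthogonal class. Conversely, given $A \in \A$, apply the first approximation sequence to get $0 \to B' \to A' \to A \to 0$ with $A' \in \Filt\clS$ and $B' \in \B$; since $A \in \A$ we have $\Ext1\G A{B'} = 0$, so this conflation splits, exhibiting $A$ as a summand of the $\clS$-filtered object $A'$.

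The main obstacle I anticipate is not any single deep step but the bookkeeping in the transfinite small object argument: one must check that the construction can be run inside the exact category $\G$ (not an ambient abelian category), that the intermediate objects remain $\clS$-filtered, and that the smallness cardinal $\kappa$ from Hypothesis~\ref{hyp:exact}(ii) can be chosen uniformly enough for the iteration to terminate and for functoriality to hold. Since all of this is exactly the content of~\cite[Corollary 2.15 and Remark 3.8]{SaSt11}, the honest proof is short: state that the hypotheses of that corollary are met — $\G$ is as in Hypothesis~\ref{hyp:exact}, $\clS$ is a set, $\Filt\clS$ is generating — and quote it for~(i) and the first half of~(ii), then give the two-line splitting argument above for the converse in~(ii).
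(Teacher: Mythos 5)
The paper offers no proof of this proposition at all — it is stated as a direct citation of \cite[Corollary 2.15(2) and Remark 3.8]{SaSt11}. Your proposal takes the same route (invoke that reference, verify its hypotheses are met) and additionally sketches what is inside the citation; the sketch of the small object argument for (i), and the splitting argument for the converse of (ii) — use that the constructed $A_X$ is $\clS$-filtered and that $\Ext1\G A{B_X}=0$ forces the conflation to split — are both correct and match the standard proof in \cite{SaSt11}.
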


\section{The hierarchy of triangulated categories}
\label{sec:triang}

Although the main aim of this paper is to study co-$t$-structures in the derived category of a commutative noetherian ring, some results are true for much more general triangulated categories. Here we summarize which triangulated categories we will be considering.

The most general object of our interest will be a triangulated category with small coproducts. We refer to~\cite{Nee01} for basic properties of such categories.

A more special but yet very general and abstract is the class of triangulated categories of the form $\T = \bbD(e)$, where $\bbD$ is a stable derivator in the sense of~\cite{CisNee08,Gr12,Malt07} and $e$ is the one-point category. We will discuss such categories in~\S\ref{subsec:derivators}.

Specializing further, we get the class of compactly generated algebraic triangulated categories in the sense of~\cite{KellerDG,KrauseChicago}. We devote \S\S\ref{subsec:algebraic} and~\ref{subsec:model} to basic properties of such categories.

\subsection{Stable derivators}
\label{subsec:derivators}

The theory of derivators was initiated by Grothendieck~\cite{GrothDerivators}, Heller~\cite{Heller88}, Keller~\cite{Keller91} and Franke~\cite{Franke96}, where the term derivator was coined by Grothend\-ieck. A motivation for defining derivators is an attempt to fix the unfortunate fact that diagram categories of a triangulated category need not be triangulated. Derivators circumvent this problem by including ``derived versions'' of the diagram categories in the structure. More historical notes and motivating points can be found in the introduction of~\cite{CisNee08} or~\cite{Gr12}.

A \emph{stable derivator} (also called \emph{triangulated derivator} in the literature) $\bbD$ is a strict $2$-functor
\[ Dia^\op \la CAT, \]
whose domain is a $2$-subcategory $Dia$ of the $2$-category $Cat$ of all small categories, and which satisfies certain axioms. The codomain of $\bbD$ is the ''$2$-category'' of all (not necessarily small) categories. In this paper, we will always assume that $Dia = Cat$ and we will only be interested in the case where $\bbD(I)$ is a category with small coproducts for every small category $I$. In fact, \cite{Gr12} includes the latter assumption already in the definition of a derivator; see~\cite[Proposition 1.7]{Gr12}.

We omit the precise lengthy definition of a stable derivator here. There are various good sources available where the reader might wish to look for the definition as well as for basic properties. Many of them are proved in~\cite{CisNee08}, while a nice and detailed treatment of the topic involving a simplification of the axioms is given in~\cite{Gr12}. Concise treatments can also be found in~\cite{Cis03,Malt07}.

A word of warning regarding the just mentioned references: There is some ambiguity in what $Cat^\op$ means for natural transformations. One may formally invert their direction as in~\cite{Cis03,CisNee08,KeNi11,Malt07}, but one may also keep their direction as in~\cite{Gr12}. This difference is inessential but one should keep it in mind. We will stick to the majority convention that the direction of $2$-morphisms is inverted as well. When referring to statements in~\cite{Gr12}, we will always mean their appropriately dualized versions.

Now suppose that $\bbD$ is a stable derivator, $e$ the terminal category in $Cat$ and $I$ is any small category. If we denote $\T = \bbD(e)$, then $\bbD(I)$ can be viewed as an approximation of the diagram category $\T^{I^\op}$ and there is always a comparison functor $d_I\dd \bbD(I) \to \T^{I^\op}$ (see~\cite[\S1.10]{CisNee08}). The favorable fact is that unlike $\T^{I^\op}$, the category $\bbD(I)$ \emph{is} canonically triangulated:

\begin{proposition} \label{prop:deriv_triang}
Let $\bbD\dd Cat^\op \to CAT$ be a stable derivator. Then:
\begin{ii}
\item $\bbD(I)$ is canonically triangulated for each $I \in Cat$;
\item Given any morphism $u\dd I \to J$ in Cat, then $u^*\dd \bbD(J) \to \bbD(I)$ is a triangulated functor. Here we denote $\bbD(u)$ by $u^*$ as is customary.
\end{ii}
\end{proposition}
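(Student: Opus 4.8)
The plan is to deduce both parts from two standard facts about stable derivators, established in~\cite{CisNee08,Malt07} (see also~\cite{Gr12}): first, that the base $\bbD(e)$ of any stable derivator carries a canonical triangulated structure; and second, that an exact morphism of stable derivators, i.e.\ one commuting with all homotopy left and right Kan extensions, restricts to a triangulated functor on bases. The work then consists in recalling these and in handling the bookkeeping via \emph{shifted derivators}.

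For~(i), fix a small category $I$ and let $\bbD^I$ be the shifted prederivator: $\bbD^I(K)=\bbD(I\times K)$ on objects and $\bbD^I(w)=(\mathrm{id}_I\times w)^*$ on a functor $w\dd K\to K'$, with the $2$-functoriality inherited from $\bbD$. I would first recall that $\bbD^I$ is again a derivator, and a stable one, the derivator axioms reducing to those for $\bbD$: the coproduct axiom is immediate; the evaluation functors of $\bbD^I$ at the objects $k\in K$ factor the jointly conservative evaluations of $\bbD$ at the objects $(i,k)\in I\times K$; homotopy Kan extensions along $w$ in $\bbD^I$ are computed by those along $\mathrm{id}_I\times w$ in $\bbD$; and the pointwise formula in $\bbD^I$ follows from the one in $\bbD$ via the identification of comma categories $(\mathrm{id}_I\times w)/(i,k')\cong(I/i)\times(w/k')$ together with homotopy finality of the inclusion of a terminal object. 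For stability, one checks that a square in $\bbD^I(\square)=\bbD(I\times\square)$, where $\square=[1]^2$, is homotopy cocartesian (resp.\ cartesian) \iff it is so when regarded in $\bbD$, because the homotopy Kan extensions along the inclusions of the span and cospan subposets of $\square$ are inherited from $\bbD$ and can be computed one object of $I$ at a time; hence the self-duality of (co)cartesian squares in $\bbD$ passes to $\bbD^I$. Since $\bbD^I(e)=\bbD(I)$, the first standard fact endows $\bbD(I)$ with a canonical triangulated structure: writing $\ulcorner\subset\square$ for the full subposet obtained by deleting the terminal object, the suspension $\Sigma_I$ is the composite $\bbD(I)=\bbD^I(e)\to\bbD^I(\ulcorner)\to\bbD^I(\square)\to\bbD(I)$ that extends an object to the span $\ulcorner$ with $0$ at the two non-initial vertices, takes the homotopy left Kan extension to $\square$, and restricts to the terminal vertex; stability makes $\Sigma_I$ an equivalence, and the distinguished triangles are exactly those isomorphic to the triangles read off from the homotopy cocartesian squares in $\bbD^I(\square)$ with a zero corner. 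The verification of the remaining axioms, the octahedral axiom included, is the content of the references cited above.

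For~(ii), given $u\dd I\to J$, the functors $(u\times\mathrm{id}_K)^*\dd\bbD^J(K)\to\bbD^I(K)$ assemble, by $2$-functoriality of $\bbD$, into a morphism of prederivators $\bbD^J\to\bbD^I$ whose value at $e$ is $u^*\dd\bbD(J)\to\bbD(I)$. This morphism is exact: for $w\dd K\to K'$ the comparison transformation is the mate of the identity $2$-cell filling the commutative square whose horizontal legs are $\mathrm{id}_{(-)}\times w$ and whose vertical legs are $u\times\mathrm{id}_{(-)}$, and this square is homotopy exact, being a pullback of small categories in which the horizontal legs are products with a fixed functor---a standard class of homotopy exact squares~\cite{CisNee08,Malt07}. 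Since the triangulated structure on $\bbD(I)$ was built solely out of extension by zero, homotopy Kan extensions along $\ulcorner\hookrightarrow\square$ (and its dual), and restriction, the second standard fact shows that $u^*$ is triangulated; concretely, the comparison isomorphism $u^*\Sigma_J\cong\Sigma_I u^*$ is the pasting of the three base-change isomorphisms for the three steps defining $\Sigma$.

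The main obstacle is not one hard argument but the accumulated derivator bookkeeping: the delicate points are the pointwise Kan extension axiom for the shifted derivator $\bbD^I$, the fact that homotopy (co)cartesianness of squares can be tested object-by-object over $I$ so that stability descends to $\bbD^I$, and the homotopy exactness of the product squares used in~(ii). All three belong to the standard theory of (stable) derivators, so in the write-up I would record the key identifications and otherwise cite~\cite{CisNee08,Gr12,Malt07}.
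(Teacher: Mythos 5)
Your proposal is correct and reproduces, in outline, exactly the argument found in the two sources that the paper itself cites for this proposition. The paper's own ``proof'' is simply a reference to~\cite[Th\'eor\`eme 1]{Malt07} and~\cite[Theorem 4.16 and Corollary 4.19]{Gr12}; your reconstruction via shifted derivators $\bbD^I$ (to reduce~(i) to the base case) and exact morphisms of derivators arising from homotopy exact product squares (for~(ii)) is precisely the strategy of~\cite{Gr12}, so you have effectively written out the cited proof rather than found an alternative route.

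One small point worth tightening in a write-up: the claim that stability descends to $\bbD^I$ because (co)cartesianness ``can be tested one object of $I$ at a time'' should be stated as the lemma that a coherent square in $\bbD^I(\Box)=\bbD(I\times\Box)$ is (co)cartesian iff each of its restrictions along $(i\times\mathrm{id}_\Box)^*\dd\bbD(I\times\Box)\to\bbD(\Box)$ is, which in turn rests on the pointwise detection of essential images of fully faithful Kan extensions (Der4 plus fully faithfulness of $(i_\ulcorner)_!$). This is exactly the kind of derivator bookkeeping you flag, and it is carried out in~\cite{Gr12}; citing it there, as the paper does, is the economical choice.
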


\begin{proof}
The description of the triangulated structure and a sketch of the proof are given in~\cite[Th\'eor\`eme 1]{Malt07}, along with the promise to provide the details in a future paper. In the meantime, a thorough account has been given in~\cite[Theorem 4.16 and Corollary 4.19]{Gr12}.
\end{proof}

Next let us focus on the possible variants of homotopy pullbacks and pushouts in the context of triangulated categories and stable derivators. A version which works for \emph{any} triangulated category can be found for example in~\cite[Definition 1.4.1]{Nee01}:

\begin{definition} \label{def:htp_cartesian}
A commutative square in a triangulated category
\[
\begin{CD}
W @>{x}>> X       \\
@V{y}VV @VV{u}V   \\
Y @>{v}>> Z       \\
\end{CD}
\]
is called \emph{homotopy cartesian} if there exists a triangle of the form
\[
\begin{CD}
W @>{\begin{pmatrix}\phantom{-}x \\ -y\end{pmatrix}}>>
X \oplus Y @>{\begin{pmatrix}u & v\end{pmatrix}}>>
Z @>>> \Sigma W.
\end{CD}
\]
\end{definition}

Following the spirit of derivators, another definition seems more convenient if $\T = \bbD(e)$. In order to state it, we adopt the following notation from~\cite{Malt07}: For $n < \omega$, we denote $\Delta_n = \{0,1,\dots,n\}$ and consider $\Delta_n$ with the natural ordering. As customary, we will view $\Delta_n$ as well as any other posets in the argument of $\bbD$ as small categories.

\begin{definition} \label{def:bicartesian} \cite[Definitions 3.9 and 4.1]{Gr12}
Let $\bbD$ be a stable derivator. Denote $\Box = \Delta_1^\op \times \Delta_1^\op$, and let $i_\ulcorner\dd \ulcorner \to \Box$ and $i_\lrcorner\dd \lrcorner \to \Box$ denote the inclusions from the subposets
\[
\begin{CD}
(0,0) @<<< (0,1)   \\
@AAA       @.      \\
(1,0) @.        
\end{CD}
\qquad \textrm{and} \qquad
\begin{CD}
      @.   (0,1)\phantom{,}   \\
@.         @AAA               \\
(1,0) @<<< (1,1),
\end{CD}
\]
respectively. An object $A \in \bbD(\Box)$ is called \emph{bicartesian} if it belongs to the essential image of the functor ${(i_\ulcorner)}_!\dd \bbD(\ulcorner) \to \bbD(\Box)$ (which is left adjoint to ${(i_\ulcorner)}^* = \bbD(i_\ulcorner)\dd \bbD(\Box) \to \bbD(\ulcorner)$). Equivalently, we can dually define bicartesian squares as those in the essential image of ${(i_\lrcorner)}_*\dd \bbD(\lrcorner) \to \bbD(\Box)$.
\end{definition}

The coming proposition answers in an expected way the natural question of how the two definitions relate. As we were unable to find this basic fact in the literature\footnote{It was later pointed to us by Bernhard Keller and Denis-Charles Cisinski that a very similar result has been independently obtained in \cite[Theorem 6.1]{GPS14} by Groth, Ponto and Shulman at around the same time.}, we are including a full proof.

\begin{proposition} \label{prop:htp_vs_bicart}
Let $\bbD$ be a stable derivator and $\T = \bbD(e)$. If $A \in \bbD(\Box)$ is a bicartesian object, then its diagram
\[
d_\Box(A)\dd \qquad
\begin{CD}
A_{00} @>>> A_{01}   \\
@VVV       @VVV      \\
A_{10} @>>> A_{11}      
\end{CD}
\]
is a homotopy cartesian square in $\T$.
\end{proposition}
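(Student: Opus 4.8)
The plan is to extract the triangle required by Definition~\ref{def:htp_cartesian} directly from the fact that a bicartesian $A$ is a homotopy left Kan extension. Since $A$ lies in the essential image of $(i_\ulcorner)_!$, the square $A$ is cocartesian, so the terminal vertex $A_{11}$ of $d_\Box(A)$ is a homotopy pushout of the span $A_{01} \leftarrow A_{00} \to A_{10}$. The whole statement then reduces to showing that this homotopy pushout is a cone, in the triangulated category $\T$, of the anti-diagonal morphism $A_{00} \xrightarrow{\binom{\phantom{-}x}{-y}} A_{01} \oplus A_{10}$, with $A_{01}\oplus A_{10} \to A_{11}$ equal to $(u\ v)$; equivalently, that the three-term complex of Definition~\ref{def:htp_cartesian} extends to a distinguished triangle.

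First I would recall from~\cite[\S4]{Gr12} how cones and the suspension $\Sigma$ are produced on $\T=\bbD(e)$: a morphism $f$ of $\T$ is an object of $\bbD(\Delta_1)$, coning off $f$ is a homotopy left Kan extension that places a zero object suitably and yields a cocartesian square with terminal vertex $\mathrm{cone}(f)$, and the distinguished triangles are exactly those isomorphic to the resulting $\mathrm{dom}\,f\to f\to\mathrm{cone}(f)\to\Sigma(\mathrm{dom}\,f)$. I would also invoke two routine facts about stable derivators from~\cite{Gr12}: cocartesian squares obey the horizontal pasting law (in a horizontal composite of two squares with the left one cocartesian, the right one is cocartesian iff the outer rectangle is), and homotopy left Kan extensions along fully faithful inclusions of posets are computed pointwise by colimits over the relevant slice categories. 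With these in hand, the problem becomes a comparison of two coherent cocartesian squares: the given $A$, and the one obtained by coning off $A_{00}\to A_{01}\oplus A_{10}$.

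The technical core is then to assemble, over a suitable finite poset enlarging $\Box$ --- a ``$2\times 3$'' poset such as $\Delta_1^\op\times\Delta_2^\op$, or a doubled square carrying an extra zero vertex --- a single coherent diagram that simultaneously witnesses the coproduct $A_{01}\oplus A_{10}$, the anti-diagonal map out of $A_{00}$, the pushout computing $A_{11}$, and a zero object, and whose left-hand square restricts to a diagram canonically isomorphic to $A$. Concretely one starts from a coherent diagram built from zeros and the span $A_{01}\leftarrow A_{00}\to A_{10}$ and propagates it by successive homotopy left Kan extensions along fully faithful inclusions; tracking which vertices become zero and which become pushouts (each relevant slice category turning out to be empty, a single point, or a span) produces, on one restriction, the sought three-term complex, while an auxiliary cocartesian square of shape $A_{00}\to 0\to\Sigma A_{00}$ embedded in the diagram produces the connecting vertex $\Sigma A_{00}$. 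The pasting law glues these pieces into one distinguished triangle.

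Having built this diagram, I would finish by restricting it to $\Box$ and to the discrete subcategory $\{(0,1),(1,0)\}$, comparing with $d_\Box(A)$ to confirm that the first two maps of the triangle are $\binom{x}{-y}$ and $(u\ v)$; since the triangle is by construction one of those used in~\cite{Gr12} to \emph{define} the triangulated structure on $\T$, it is distinguished, which is exactly the content of Definition~\ref{def:htp_cartesian}. The step I expect to be the main obstacle is this middle construction: choosing the enlarging poset economically, checking that at every vertex the comma category computing the pointwise left Kan extension has the expected contractible-or-empty shape, and --- the perennial subtle point in such arguments --- keeping the signs straight so that the middle term of the triangle is the \emph{anti}-diagonal $\binom{x}{-y}$ rather than the diagonal, which is exactly what forces the minus sign in Definition~\ref{def:htp_cartesian}.
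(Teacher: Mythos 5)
Your high-level plan matches the paper's: extend the bicartesian square $A$ by homotopy Kan extensions along inclusions of larger posets, track which subsquares stay bicartesian, and read off a defining triangle for the triangulated structure. But the proposal leaves the genuinely hard part as an acknowledged ``main obstacle'' rather than carrying it out, and the step that is skipped is not routine.

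The specific gap: you suggest $\Delta_1^\op\times\Delta_2^\op$ (or ``a doubled square with an extra zero vertex'') as the enlarging poset, but neither of these by itself produces a coherent diagram containing a vertex that is the \emph{coproduct} $A_{01}\oplus A_{10}$ together with the anti-diagonal map out of $A_{00}$ and the codiagonal map into $A_{11}$. Restricting $A$ to $\{(0,1),(1,0)\}$ and then Kan-extending does not remember the maps $x,y,u,v$ coherently, so one cannot just ``propagate by successive homotopy left Kan extensions'' starting from the span. The paper has to pass first to $\Delta_2^\op\times\Delta_2^\op$ to install the zero objects and $\Sigma A_{00}$ (giving $B$), and then --- crucially --- build a genuinely four-dimensional object $C\in\bbD(J)$ over a subposet $J\subseteq\Delta_2^\op\times\Delta_2^\op\times\Delta_1^\op\times\Delta_1^\op$ whose additional two directions are exactly what make a biproduct vertex appear coherently (via the bicartesian square $X\oplus Y\to X,\ X\oplus Y\to Y,\ 0$). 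Moreover, identifying the two component maps of $W\to X\oplus Y$ as $x$ and $-y$ is not a matter of ``keeping signs straight'' in the abstract: the paper needs to compare two different embeddings $e_1,e_2\dd\Delta_1^\op\times\Delta_2^\op\to\Delta_2^\op\times\Delta_2^\op$ differing by the flip $\sigma$, pin down canonical isomorphisms $\beta_1,\beta_2$ via their restriction along a chosen $\Delta_1^\op\hookrightarrow\Delta_1^\op\times\Delta_2^\op$, and then invoke the sign convention for $\sigma$. Until these constructions are actually made, the proof is a plausible outline, not a proof; the parts you defer are exactly the content of the paper's argument.
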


\begin{proof}
We denote $W = A_{00}$, $X = A_{01}$, $Y = A_{10}$ and $Z = A_{11}$, so that $d_\Box(A) \in \T^{\Box^\op}$ has the form:
\[
\begin{CD}
W @>{x}>> X       \\
@V{y}VV @VV{u}V   \\
Y @>{v}>> Z       \\
\end{CD}
\]

In order to prove that the latter diagram is homotopy cartesian in $\T$, we construct two objects from $A$ in other categories in the image of $\bbD$ and compare their diagrams.

First, consider the following full subposet $I$ of $\Delta_2^\op \times \Delta_2^\op$:
\[
I\dd \qquad
\begin{CD}
(0,0) @<<< (0,1) @<<< (0,2)   \\
@AAA       @AAA               \\
(1,0) @<<< (1,1)              \\
@AAA                          \\
(2,0),
\end{CD}
\]
along with the obvious embeddings $i_1\dd \Box = \Delta_1^\op \times \Delta_1^\op \to I$ and $i_2\dd I \to \Delta_2^\op \times \Delta_2^\op$. Then one of our objects of interest will be $B = (i_2)_!(i_1)_*(A)$, whose diagram is of the form
\begin{equation} \label{eqn:2x2square}
d_{\Delta_2^\op \times \Delta_2^\op}(B)\dd \qquad
\begin{CD}
W @>{x}>> X @>>> 0           \\
@V{y}VV @V{u}VV @VVV         \\
Y @>{v}>> Z @>{p}>> V        \\
@VVV    @V{q}VV @VV{r}V      \\
0   @>>>  U @>{s}>> \Sigma W,
\end{CD}
\end{equation}
where $\Sigma X$ is the suspension of $X$. Let us be a little more specific. The fact that we have zeros in the two corners follows from~\cite[Proposition 3.6]{Gr12}. We deduce that all the objects in $\bbD(\Box)$ induced by poset embeddings $\Box \to \Delta_2^\op \times \Delta_2^\op$ onto the ``small squares'' in $\Delta_2^\op \times \Delta_2^\op$ are bicartesian by~\cite[Propositions 3.10 and 4.6]{Gr12} (Maltsiniotis calls such a $B$ \emph{polycartesian} in~\cite[\S3]{Malt07}). Abusing the terminology, we will simply say that all small squares in the above diagram are bicartesian. It follows immediately from the definition of $\Sigma$ (see~\cite[Definition 3.16]{Gr12}) that the lower right corner is occupied by $\Sigma W$ up to a canonical isomorphism.

Second, consider the full subposet $J \subseteq \Delta_2^\op \times \Delta_2^\op \times \Delta_1^\op \times \Delta_1^\op$, whose Hasse diagram is as follows (to make the picture readable, we write the coordinates in the form $abcd$ instead of $(a,b,c,d)$):
\[
\xymatrix{
& && 0201 \ar[dl] \\
& 1100 & 0101 && 1201 \ar[ul] \ar[dl] \\
J\dd & 1110 \ar[u] && 1101 \ar[ul] \ar[dl] \ar[ull] & 1211 \ar[u] \ar[dl]|\hole & 2201 \ar[ul] \ar[dl] \\
& & 1001 & 1111 \ar[u] \ar[ull]|(.335)\hole & 2101 \ar[ul] \ar[dl] & 2211 \ar[u] \ar[ul]|\hole \ar[dl] \\
& && 2001 \ar[ul] & 2111 \ar[u] \ar[ul]|\hole \\
}
\]
Consider also the full subposet $J_1 \subseteq J$ depicted below, along with the obvious embedding $j_1\dd J_1 \to J$:
\[
\xymatrix{
& && 0201 \\
& &&& \\
J_1\dd & 1110 &&& 1211 \ar@/_/[uul] \ar@/^/[lll] & 2201 \ar[uull] \ar[ddll] \\
& &&&& 2211 \ar[u] \ar[ul]|\hole \ar[dl] \\
& && 2001 & 2111 \ar@/_/[l] \ar@/_/[uulll]|(.205)\hole \\
}
\]
Now we will construct an object $C \in \bbD(J)$ from $B \in \bbD(\Delta_2^\op \times \Delta_2^\op)$ as follows. We take the two embeddings $k\dd \lrcorner \to \Delta_2^\op \times \Delta_2^\op$ and $j_2\dd \lrcorner \to J_1$ given by
\begin{equation} \label{eqn:corner_maps}
\begin{matrix}
k\dd   &(1,0) &\mapsto &(2,1), &(0,1) &\mapsto &(1,2), &(1,1) &\mapsto &(2,2), \\
j_2\dd &(1,0) &\mapsto &2111,  &(0,1) &\mapsto &1211,  &(1,1) &\mapsto &2211.  \\
\end{matrix}
\end{equation}
Then the object $k^*(B)$ has the diagram of the shape
\[
\begin{CD}
     @.      V       \\
@.         @VV{r}V   \\
U @>{s}>> \Sigma W,
\end{CD}
\]
and we put $C = (j_1)_*(j_2)_!k^*(B) \in \bbD(J)$. We claim that the diagram of $C$ is up to isomorphism of the shape below, the maps among $X,Y,X\oplus Y$ are the product/coproduct maps, and all small squares are bicartesian.
\begin{equation} \label{eqn:4dim}
\begin{split}
\xymatrix{
& && 0 \ar@{<-}[dl] \\
& W & X && Y \ar@{<-}[ul] \ar@{<-}[dl] \\
d_J(C)\dd & 0 \ar@{<-}[u] && X \oplus Y \ar@{<-}[ul] \ar@{<-}[dl] \ar@{<-}[ull]^{\left(\begin{smallmatrix}\phantom{-}x \\ -y\end{smallmatrix}\right)} & V \ar@{<-}[u] \ar@{<-}[dl]|\hole_(.3){p\!} & 0 \ar@{<-}[ul] \ar@{<-}[dl] \\
& & Y & Z \ar@{<-}[u]|{(u,v)} \ar@{<-}[ull]|(.35)\hole & X \ar@{<-}[ul] \ar@{<-}[dl] & \Sigma W \ar@{<-}[u] \ar@{<-}[ul]|\hole_(.7){r} \ar@{<-}[dl]^{s} \\
& && 0 \ar@{<-}[ul] & U \ar@{<-}[u] \ar@{<-}[ul]|\hole^(.3){q\!} \\
}
\end{split}
\end{equation}
To see this, notice first that the occurrence of the zero objects in the diagram is a consequence of~\cite[Proposition 3.6]{Gr12}.

The bicartesian property of all squares can be obtained in a straightforward manner from~\cite[Propositions 3.10 and 4.6]{Gr12}. At that point it is also helpful to use the following observation: If $J_1 \subseteq J' \subseteq J$ and we factor correspondingly the inclusion $j_1\dd J_1 \to J$ as
\[ J_1 \overset{j'}\la J' \overset{j''}\la J, \]
then a commutative square which is contained in $J'$ is bicartesian in $C$ \iff it is bicartesian in $C' = (j')_*(j_2)_!k^*(B) \in \bbD(J')$. All we need here is to notice that canonically $C' \cong (j'')^*(C)$ by~\cite[Proposition 1.20]{Gr12}.

Let us focus on objects in $d_J(C)$ now. Comparing $d_J(C)$ with $d_{\Delta_2^\op \times \Delta_2^\op}(B)$ in (\ref{eqn:2x2square}) and using the fact that a bicartesian square is up to a canonical isomorphism determined by its restriction to $\ulcorner$ or $\lrcorner$ (cf.~\cite[Lemma 1.21]{Gr12}), one easily sees that the objects agree with our claim up to isomorphism. The only part of the diagram which may deserve some attention is the square
\[
\begin{CD}
X \oplus Y @>>> Y\phantom{,}  \\
@VVV          @VVV            \\
X     @>>>      0,
\end{CD}
\]
but the existence of this bicartesian square follows from the proof of~\cite[Proposition 4.7]{Gr12}.

Finally, we will consider the morphisms in $d_J(C)$. Let $\ell \dd \Delta_2^\op \times \Delta_2^\op \setminus \{(0,0)\} \to \Delta_2^\op \times \Delta_2^\op$ be the canonical inclusion and denote by $m\dd \Delta_2^\op \times \Delta_2^\op \setminus \{(0,0)\} \to J$ the inclusion given by the picture:
\[
\xymatrix@-1pc{
& (0,2) \ar[dl] &&& &
& 0201 \ar[dl] \\
(0,1) && (1,2) \ar[ul] \ar[dl] && &
0101 && 1211 \ar[ul] \ar[dl] \\
& (1,1) \ar[ul] \ar[dl] && (2,2) \ar[ul] \ar[dl] & \overset{m}\la &
& 1111 \ar[ul] \ar[dl] && 2211 \ar[ul] \ar[dl] \\
(1,0) && (2,1) \ar[ul] \ar[dl] && &
1001 && 2111 \ar[ul] \ar[dl] \\
& (2,0) \ar[ul] &&& &
& 2001 \ar[ul] \\
}
\]
Then, by the construction $C$ and the fact that all commutative squares in both $B$ and $C$ are bicartesian, we obtain an isomorphism $\alpha\dd \ell^*(B) \overset{\cong}\la m^*(C)$ in $\bbD(\Delta_2^\op \times \Delta_2^\op \setminus \{(0,0)\})$ whose restriction along $k$ (see~(\ref{eqn:corner_maps})) gives the canonical isomorphism
\[ k^*(\alpha)\dd k^*(B) \la k^*m^*(C) = (j_2)^*(j_1)^*(j_1)_*(j_2)_!k^*(B), \]
coming from the units/counits of the adjunctions $\big((j_1)^*,(j_1)_*\big)$ and $\big((j_2)_!,(j_2)^*\big)$. In fact, this property uniquely determines $\alpha$ since the functors $(j_1)_*$ and $(j_2)_!$ are fully faithful by \cite[Proposition 1.20]{Gr12}.

Thus, we can assume that in~(\ref{eqn:4dim}), the maps $p,q,r,s$ are placed where drawn and the compositions
\[
X \la X \oplus Y \la Z
\qquad \textrm{and} \qquad
Y \la X \oplus Y \la Z
\]
equal $u$ and $v$, respectively.

In order to compute the map $W \to X \oplus Y$ in~(\ref{eqn:4dim}), we will consider two embeddings $e_1, e_2\dd \Delta_1^\op \times \Delta_2^\op \to \Delta_2^\op \times \Delta_2^\op$ and two embeddings $f_1, f_2\dd \Delta_1^\op \times \Delta_2^\op \to J$, defined as follows. We take $e_1 = d^1 \times 1_{\Delta_2^\op}$, where $d^1$ is the face map sending $0 \mapsto 0$ and $1 \mapsto 2$ (this matches the convention in~\cite[p. 348]{Gr12}). We further put $e_2 = \sigma' e_1$, where $\sigma'\dd \Delta_2^\op \times \Delta_2^\op \to \Delta_2^\op \times \Delta_2^\op$ is given by $(n,n') \mapsto (n',n)$. Regarding the maps $f_i$, $i=1,2$, these are given by the following pictures, where $\bar i$ stands for $3-i$:
\[
\begin{CD}
(0,0) @<<< (0,1)      @<<< (0,2)  \\
@AAA       @AAA            @AAA   \\
(1,0) @<<< (1,1)      @<<< (1,2)  \\
\end{CD}
\quad\overset{f_i}\la\quad
\begin{CD}
1100  @<<< \bar{i}i01 @<<< 2201    \\
@AAA       @AAA            @AAA    \\
1110  @<<< \bar{i}i11 @<<< 2211    \\
\end{CD}
\]
Again by the construction we have isomorphisms $\beta_i\dd e_i^*(B) \overset{\cong}\la f_i^*(C)$, and these can be taken canonically in the following sense. If $k'\dd \Delta_1^\op \to \Delta_1^\op \times \Delta_2^\op$ is the map sending $n \in \Delta_1^\op$ to $(1,n+1) \in \Delta_1^\op \times \Delta_2^\op$ (i.e.\ the embedding of $\Delta_1^\op$ into $\Delta_1^\op \times \Delta_2^\op$ as the bottom right horizontal arrow), we require that
\[ (k')^*(\beta_i)\dd (k')^*(e_i)^*(B) \la (k')^*f_i^*(C) = (k')^*f_i^*(j_1)_*(j_2)_!k^*(B) \]
comes from the units/counits of the adjunctions involved. This determines $\beta_i$ uniquely.

Having these isomorphisms, we would like to conclude that the compositions
\[
W \la X \oplus Y \la X
\qquad \textrm{and} \qquad
W \la X \oplus Y \la Y
\]
in diagram~(\ref{eqn:4dim}) are equal to $x$ and $y$, respectively. This is almost true, but a little more care is needed. As a matter of the fact, the compositions
\[
\Box \overset{1_{\Delta_1^\op} \times d^1}\la \Delta_1^\op \times \Delta_2^\op \overset{e_i}\la \Delta_2^\op \times \Delta_2^\op,
\qquad
i = 1,2,
\]
mapping $\Box$ to the ``biggest'' square in $\Delta_2^\op \times \Delta_2^\op$ are not equal, but rather $e_2(1_{\Delta_1^\op} \times d^1) = e_1(1_{\Delta_1^\op} \times d^1)\sigma$, where $\sigma\dd \Box \to \Box$ acts by $(n,n') \mapsto (n',n)$. As explained in~\cite[Remark 4.13]{Gr12}, this amounts to the necessity of changing a sign at either of $x,y$, and we do so at $y$.

Finally, we deduce that the maps $W \to X \oplus Y$ and $X \oplus Y \to Z$ in~(\ref{eqn:4dim}) are $(^{\phantom{-}x}_{-y})$ and $(u,v)$, respectively. Note that the compositions
\[
X \la X \oplus Y \la X
\qquad \textrm{and} \qquad
Y \la X \oplus Y \la Y
\]
are isomorphisms by~\cite[Proposition 4.5]{Gr12}, but we know more. Thanks to our canonical identifications via $\alpha,\beta_1,\beta_2$, these compositions are even identity morphisms. Hence the claim about the shape of diagram~(\ref{eqn:4dim}) is settled.

By restricting $C$ to $\bbD(\Delta_1^\op \times \Delta_2^\op)$ along the obvious inclusion $\Delta_1^\op \times \Delta_2^\op \to J$, we get an object with diagram
\[
\begin{CD}
W @>{\left(\begin{smallmatrix}\phantom{-}x \\ -y\end{smallmatrix}\right)}>>
X \oplus Y @>>> 0   \\
@VVV @VV{(u,v)}V @VVV \\
0 @>>> Z @>{rp}>> \Sigma W
\end{CD}
\]
so that both ``small'' squares are bicartesian. Using the description of the triangulated structure from~\cite{Malt07,Gr12}, we obtain a triangle
\[
\begin{CD}
W @>{\left(\begin{smallmatrix}\phantom{-}x \\ -y\end{smallmatrix}\right)}>>
X \oplus Y @>{(u,v)}>> Z @>{rp}>> \Sigma W
\end{CD}
\]
in $\T = \bbD(e)$. In view of Definition~\ref{def:htp_cartesian}, this precisely says that $d_\Box(A)$ is homotopy cartesian.
\end{proof}

The last result connected to derivators which we need concerns homotopy colimits of countable chains. Here we first recall~\cite[Definition 1.6.4]{Nee01}.

\begin{definition} \label{def:htp_colim}
Let $\T$ be a triangulated category with countable coproducts and
\[ X_0 \overset{f_0}\la X_1 \overset{f_1}\la X_2 \overset{f_2}\la \cdots \]
be a sequence of objects and morphisms in $\T$. Then a \emph{homotopy colimit} of the chain is a cocone $(g_i\dd X_i \to \hocolim X_i)$ of the latter diagram, which is given (up to a non-canonical isomorphism uniquely) by the triangle
\[
\begin{CD}
\coprod_{i \ge 0} X_i @>{1 - \coprod f_i}>> \coprod_{i \ge 0} X_i @>{(g_i)}>> \hocolim X_i @>>> \Sigma \coprod_{i \ge 0} X_i.
\end{CD}
\]

If $u\dd X \to Y$ is a morphism between two such countable systems in $\T$, which in other words means that we have a commutative diagram of the form
\[
\begin{CD}
X_0 @>{f_0}>> X_1 @>{f_1}>> X_2 @>{f_2}>> X_3 @>{f_3}>> \cdots\phantom{,} \\
@V{u_0}VV @V{u_1}VV @V{u_2}VV @V{u_3}VV \\
Y_0 @>{g_0}>> Y_1 @>{g_1}>> Y_2 @>{g_2}>> Y_3 @>{g_3}>> \cdots, \\
\end{CD}
\]
then the axioms of triangulated categories allow us to define (non-uniquely) a map
\[ \hocolim u_i\dd \hocolim X_i \la \hocolim Y_i \]
\st the diagram with triangles defining the homotopy colimits in rows commutes:
\[
\begin{CD}
\coprod_{i \ge 0} X_i @>{1 - \coprod f_i}>> \coprod_{i \ge 0} X_i @>>> \hocolim X_i @>>> \Sigma \coprod_{i \ge 0} X_i \\
@V{\coprod u_i}VV @V{\coprod u_i}VV @V{\hocolim u_i}VV @V{\coprod \Sigma u_i}VV \\
\coprod_{i \ge 0} Y_i @>{1 - \coprod g_i}>> \coprod_{i \ge 0} Y_i @>>> \hocolim Y_i @>>> \Sigma \coprod_{i \ge 0} Y_i
\end{CD}
\]
\end{definition}

\begin{remark} \label{rem:Milnor_colim}
Some may view our terminology unfortunate as homotopy colimits usually have a different meaning in the context of derivators; see e.g.~\cite{Malt07} or~\cite[Definition 1.4]{Gr12}. Keller and Nicolas~\cite{KeNi11} use the term Milnor colimit instead, but they prove in~\cite[Proposition A.5]{KeNi11} that the Milnor homotopy colimits and derivator homotopy colimits are closely connected. More precisely, the two concepts are related essentially in the same way as bicartesian squares and homotopy cartesian squares in Proposition~\ref{prop:htp_vs_bicart}. This and the fact that we will work with Neeman's~\cite{Nee01} definition in the rest of the paper, should justify our choice of the terminology.
\end{remark}

Our result, which is a modification of~\cite[Corollary A.6]{KeNi11}, says that homotopy colimits are exact in a weak sense.

\begin{proposition} \label{prop:deriv_htp_colim}
Let $\T = \bbD(e)$, where $\bbD$ is a stable derivator \st $\bbD(I)$ has countable coproducts for each small category $I$. Suppose that $Y = (Y_i, g_i \mid i < \omega)$ is a countable direct system in $\T$ and $(u_i\dd X \to Y_i)$ a cone. Then there exists a commutative diagram in $\T$
\[
\begin{CD}
\Sigma X @= \Sigma X @= \Sigma X @= \Sigma X @= \cdots \\
@A{w_0}AA @A{w_1}AA @A{w_2}AA @A{w_3}AA \\
Z_0 @>{h_0}>> Z_1 @>{h_1}>> Z_2 @>{h_2}>> Y_3 @>{h_3}>> \cdots \\
@A{v_0}AA @A{v_1}AA @A{v_2}AA @A{v_3}AA \\
Y_0 @>{g_0}>> Y_1 @>{g_1}>> Y_2 @>{g_2}>> Y_3 @>{g_3}>> \cdots \\
@A{u_0}AA @A{u_1}AA @A{u_2}AA @A{u_3}AA \\
X @= X @= X @= X @= \cdots \\
\end{CD}
\]
\st the columns are triangles, all the squares
%
%
in the middle row are homotopy cartesian, and there is a triangle of the form
\[
\begin{CD}
X @>{\hocolim u_i}>> \hocolim Y_i @>{\hocolim v_i}>> \hocolim Z_i @>{\hocolim w_i}>> \Sigma X.
\end{CD}
\]
\end{proposition}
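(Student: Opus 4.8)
The plan is to argue entirely inside the stable derivator $\bbD$: I would assemble a single coherent diagram over a suitable product index category whose underlying diagram in $\T = \bbD(e)$ is the ladder of the statement, and then read the three assertions off it. Throughout, ``parametrised by the tower direction'' will mean taking the product of a finite index poset with the poset $\omega$ indexing the towers (with the variance conventions of~\S\ref{subsec:derivators}) and performing the corresponding homotopy Kan extensions; since $\bbD$ is a derivator these commute with one another as expected, homotopy colimits along $\omega$ agree with Neeman's $\hocolim$ of Definition~\ref{def:htp_colim} up to the comparison recalled in Remark~\ref{rem:Milnor_colim}, and the result is a modification of~\cite[Corollary A.6]{KeNi11}.

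First I would realise the data coherently and take coherent cofibres. The tower $(Y_i,g_i)$ together with the cone $(u_i\dd X \to Y_i)$ amounts to a morphism $u\dd \pi^*X \to Y$ of $\omega$-towers, where $\pi^*X$ is the constant tower on $X$; since an arbitrary tower in $\T$ need not lift to $\bbD(\omega)$, this has to be carried out via the telescope presentation as in~\cite[Appendix~A]{KeNi11}, where the objects involved are coproducts and the maps are single morphisms, both of which lift, and one checks compatibility with the shift. Applying this to $X$, to $Y$ and to $u$ at once produces a coherent model, still denoted $u$. Then, sticking in zero objects and performing the left Kan extensions that produce cofibre squares — exactly the procedure in the proof of Proposition~\ref{prop:htp_vs_bicart}, now carried out parametrised by $\omega$ — yields a coherent object over $\omega \times \clP$, where $\clP = \Delta_1^\op \times \Delta_2^\op$ encodes a triangle (each fibre has $A,B,0$ in its first row and $0,C,\Sigma A$ in its second, with both squares bicartesian). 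By construction the row of $X$'s is $\pi^*X$ and the row of $\Sigma X$'s is $\Sigma\pi^*X = \pi^*(\Sigma X)$, so the horizontal maps in those two rows are identities; restricting the coherent object along a $4$-chain $\hookrightarrow \clP$ gives the whole ladder in $\T$, and the $\clP$-fibre over each $i$ is, by the description of the triangulated structure used at the end of the proof of Proposition~\ref{prop:htp_vs_bicart}, a distinguished triangle $X \to Y_i \to Z_i \to \Sigma X$.

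It then remains to check the two finer points. For the middle-row squares, fix $i$ and consider the $3$-cube sitting inside the coherent object over columns $i,i+1$ and the cofibre square in $\clP$: its two ``column'' faces are the cofibre squares of $u_i$ and $u_{i+1}$, hence bicartesian, and the face $\bigl(\begin{smallmatrix} X & \to & 0 \\ \downarrow & & \downarrow \\ X & \to & 0\end{smallmatrix}\bigr)$ (identity vertical maps) is bicartesian as well, so the pasting lemmas for bicartesian squares in stable derivators~\cite[Propositions 3.10 and 4.6]{Gr12} force the remaining face $\bigl(\begin{smallmatrix} Y_i & \to & Y_{i+1} \\ \downarrow & & \downarrow \\ Z_i & \to & Z_{i+1}\end{smallmatrix}\bigr)$ to be bicartesian, whence homotopy cartesian in $\T$ by Proposition~\ref{prop:htp_vs_bicart}. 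For the last triangle, I would apply the homotopy colimit along $\omega$ to the whole coherent object: being a left adjoint it preserves zero objects and bicartesian squares, so it takes the coherent column-triangle to a triangle $\hocolim \pi^*X \to \hocolim Y_i \to \hocolim Z_i \to \hocolim \Sigma\pi^*X$ in $\T$; since $\hocolim$ of the constant tower on $X$ (resp.\ on $\Sigma X$) is $X$ (resp.\ $\Sigma X$) and the induced first map is $\hocolim u_i$ by the functoriality in Definition~\ref{def:htp_colim}, this is exactly the required triangle $X \to \hocolim Y_i \to \hocolim Z_i \to \Sigma X$.

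I expect the main obstacle to be the coherence bookkeeping. The realisation step genuinely cannot be done naively, since arbitrary towers do not lift to $\bbD(\omega)$, so it must be routed through the telescope construction of~\cite{KeNi11}; and verifying that the parametrised Kan extensions produce precisely the diagram described, with all the asserted squares bicartesian, is the same lengthy — if by now routine — type of computation as in the proof of Proposition~\ref{prop:htp_vs_bicart}. A lesser but real difficulty is keeping the comparison between the derivator homotopy colimit along $\omega$ and Neeman's $\hocolim$ under control, as in Remark~\ref{rem:Milnor_colim} and~\cite[Proposition A.5]{KeNi11}, when phrasing the final triangle.
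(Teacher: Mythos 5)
Your overall strategy coincides with the paper's: lift the data to a coherent object over $\omega^\op$ (via Keller--Nicol\'as' telescope presentation), form the coherent cofibre over $\Delta_1^\op\times\Delta_2^\op\times\omega^\op$, read the ladder off its underlying diagram, deduce bicartesianness of the middle squares by the pasting lemmas for stable derivators, and obtain the final triangle from the behaviour of the homotopy colimit along $\omega$. The one place your proposal diverges, and where it is noticeably more laborious than the paper, is the lifting of the morphism: you propose to lift the \emph{morphism of towers} $u\dd \pi^*X \to Y$ through the telescope presentation, which requires a separate verification of compatibility with the shift map and produces a non-canonical lift. The paper instead takes $\tilde X = 0_!(X)$, where $0\dd e \to \omega^\op$ picks out the object $0$, and obtains $\tilde u\dd 0_!(X) \to \tilde Y$ purely formally via the adjunction $(0_!,0^*)$ applied to $u_0$ alone; that $d_{\omega^\op}(\tilde u)$ agrees with the given cone is then a straightforward check via \cite[Lemma~1.19]{Gr12}. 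This trick simultaneously removes the need to lift a morphism of towers and renders the identification $\pi_!(\tilde X)\cong X$ trivial ($\pi_!0_!=(\pi 0)_!=\mathrm{id}_!$), whereas your route requires the homotopy finality of $\omega^\op\to e$ to see $\pi_!\pi^*X\cong X$. Both routes are sound, but the adjunction trick is what turns the ``coherence bookkeeping'' you correctly flag as the main obstacle into a few lines, so you should absorb it; the rest of your argument (the $3$-cube pasting for the middle squares and the left-adjointness of $\pi_!$ for the last triangle, compared with Neeman's $\hocolim$ as in Remark~\ref{rem:Milnor_colim}) matches the paper's proof modulo the explicit appeal to \cite[Proposition~A.5(3)]{KeNi11}.
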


\begin{proof}
We can view the direct system $Y$ as an object of the diagram category $\T^\omega$ and, using~\cite[Proposition A.5(1)]{KeNi11}, lift it to an object $\tilde Y \in \bbD(\omega^\op)$ (i.e.\ $d_{\omega^\op}(\tilde Y) \cong Y$). As in~\cite{KeNi11}, we denote, for each $n < \omega$, by $n\dd e \to \omega^\op$ the functor sending $e$ to $n$. For $n = 0$ we are given the morphism $u_0\dd X \to Y_0 = 0^* \tilde Y$. Using the adjunction $(0_!,0^*)$ we get a morphism
\[ \tilde u\dd 0_!(X) \la \tilde Y \]
in $\bbD(\omega^\op)$. If we denote $\tilde X = 0_!(X)$, it is easy to see using~\cite[Lemma 1.19]{Gr12} or~\cite[Lemma A.3]{KeNi11} that the map $d_{\omega^\op}(\tilde u)\dd d_{\omega^\op}(\tilde X) \to d_{\omega^\op}(\tilde Y)$ in $\T^\omega$ is isomorphic to
\[
\begin{CD}
Y_0 @>{g_0}>> Y_1 @>{g_1}>> Y_2 @>{g_2}>> Y_3 @>{g_3}>> \cdots \\
@A{u_0}AA @A{u_1}AA @A{u_2}AA @A{u_3}AA \\
X @= X @= X @= X @= \cdots \\
\end{CD}
\]

As in~\cite{KeNi11}, we complete $\tilde u$ to a triangle in $\bbD(\omega^\op)$. Following~\cite[\S4.2]{Gr12} or~\cite{Malt07}, this amounts to constructing an object $T \in \bbD(\Delta_1^\op \times \Delta_2^\op \times \omega^\op)$, whose shape, using the partial diagram functor $d\dd \bbD(\Delta_1^\op \times \Delta_2^\op \times \omega^\op) \to \bbD(\omega^\op)^{\Delta_1 \times \Delta_2}$, is
\[
\begin{CD}
\tilde X @>{\tilde u}>> \tilde Y @>>> 0          \\
@VVV @V{\tilde v}VV @VVV                         \\
0 @>>> \tilde Z @>{\tilde w}>> \Sigma \tilde X   \\
\end{CD}
\]
and both the small squares are bicartesian. The diagram of $T$ in $\bbD(e)$, obtained by the full diagram functor $d_{\Delta_1^\op \times \Delta_2^\op \times \omega^\op}\dd \bbD(\Delta_1^\op \times \Delta_2^\op \times \omega^\op) \to \bbD(e)^{\Delta_1 \times \Delta_2 \times \omega}$, is of the form
\[
\xymatrix{
& X \ar@{=}[dd]|\hole \ar[rr]^{u_0} \ar[dl] && Y_0 \ar[dd]^(.3){g_0}|\hole \ar[dl]_{v_0} \ar[rr] && 0 \ar[dd] \ar[dl]   \\
0 \ar[dd] \ar[rr] && Z_0 \ar[dd]^(.3){h_0} \ar[rr]^(.7){w_0} && \Sigma X \ar@{=}[dd]   \\
& X \ar@{=}[dd]|\hole \ar[rr]^(.3){u_1}|\hole \ar[dl] && Y_1 \ar[dd]^(.3){g_1}|\hole \ar[dl]_{v_1} \ar[rr]|(.52)\hole && 0 \ar[dd] \ar[dl]   \\
0 \ar[dd] \ar[rr] && Z_1 \ar[dd]^(.3){h_1} \ar[rr]^(.7){w_1} && \Sigma X \ar@{=}[dd]   \\
& X \ar@{=}[dd]|(.48)\hole \ar[rr]^(.3){u_2}|\hole \ar[dl] && Y_2 \ar[dd]^(.3){g_2}|(.48)\hole \ar[dl]_{v_2} \ar[rr]|(.52)\hole && 0 \ar[dl] \ar[d]   \\
0 \ar[d] \ar[rr] && Z_2 \ar[d] \ar[rr]^(.7){w_2} && \Sigma X \ar@{=}[d] & \vdots   \\
\vdots & \vdots & \vdots & \vdots & \vdots
}
\]
Note that all the horizontal small squares are bicartesian by the construction, and so are the leftmost vertical small squares by~\cite[Proposition 4.5]{Gr12}. Using~\cite[Proposition 4.6]{Gr12}, one obtains that the middle vertical small squares, those with the diagrams of the form
\begin{equation} \label{eqn:middle_sq}
\begin{CD}
Z_i     @<{v_i}<<     Y_i       \\
@V{h_i}VV          @VV{g_i}V    \\
Z_{i+1} @<{v_{i+1}}<< Y_{i+1},  \\
\end{CD}
\end{equation}
are bicartesian as well. Hence diagram~(\ref{eqn:middle_sq}) is a homotopy cartesian square by Proposition~\ref{prop:htp_vs_bicart}.

Now the diagram from the statement of Proposition~\ref{prop:deriv_htp_colim} is obtained as the diagram of $t^*(T) \in \bbD(\Delta_3^\op \times \omega^\op)$, where $t$ is the obvious embedding $\Delta_3^\op \times \omega^\op \to \Delta_1^\op \times \Delta_2^\op \times \omega^\op$. The fact that there is a triangle of the form
\[
\begin{CD}
X @>{\hocolim u_i}>> \hocolim Y_i @>{\hocolim v_i}>> \hocolim Z_i @>{\hocolim w_i}>> \Sigma X.
\end{CD}
\]
follows from (the proof of) \cite[Proposition A.5(3)]{KeNi11}.
\end{proof}

\subsection{Algebraic triangulated categories}
\label{subsec:algebraic}

A triangulated category is called \emph{algebraic} if it can be constructed as the stable category $\underline{\C}$ of a Frobenius exact category~$\C$. We recall that an exact category is \emph{Frobenius} if it has enough projective and injective objects and the classes of projective and injective objects coincide. We refer to~\cite[Chapter I]{Hap88} for details.

Our main object of interest are compactly generated algebraic triangulated categories.

\begin{definition} \label{def:comp_gen_triang}
Let $\T$ be a category with small coproducts. An object $X \in \T$ is called \emph{compact} if for any collection $(Y_i \mid i \in I)$ of objects of $\T$, the canonical morphism
\[ \coprod \T(X,Y_i) \la \T(X, \coprod Y_i) \]
is an isomorphism. The full subcategory of all compact objects in $\T$ will be denoted by $\T^c$.

$\T$ is said to be \emph{compactly generated} if there is a set $\clS \subseteq \T$ of compact objects such that every non-zero $Y \in \T$ admits a non-zero morphism $S \to Y$ with $S \in \clS$.
\end{definition}

In order to give a description of algebraic compactly generated triangulated categories, we need the concept of dg categories introduced by Kelly~\cite{Kelly65} and Eilenberg in the 1960's. Bondal and Kapranov~\cite{BoKa89,BoKa90} later used this concept to enhance triangulated categories. Employing this nowadays standard idea, we use here the terminology and notation from~\cite{KellerDG}, where we also refer to for details.

\begin{definition} \label{def:dg}
A \emph{differential graded category} $\A$ (\emph{dg category} for short) is a category enriched over complexes of abelian groups so that the composition fulfills the Leibniz rule. That is, the morphisms sets $\A(X,Y)$ carry the structure of complexes with a differential $d$, and if $f,g$ are composable morphisms which are homogeneous of degrees $p$ and $q$, then $d(f \circ g) = d(f) \circ g + (-1)^pf \circ d(g)$.
\end{definition}

Starting with a small dg category $\A$, we adopt almost the same notations as in~\cite[\S\S 1,2 and 4]{KellerDG}:

\begin{itemize}
\item $\DifA$ will stand for the dg category of all right dg modules over $\A$; we refer to~\cite[\S1.2]{KellerDG} for details.

\item $\GraA$ is the category with the same objects, but $\GraA(X,Y)$ is the collection of all morphisms of degree zero between the underlying graded modules of $X$ and $Y$. Formally, $\GraA(X,Y)$ is the degree zero part of $\DifA(X,Y)$.

\item $\CpxA$ will denote the category of right dg modules over $\A$ with ordinary morphisms of dg modules. Formally we can write $\CpxA(X,Y) = Z^0(\DifA)(X,Y)$; see~\cite[\S2.1]{KellerDG}.

\item $\DerA$ will stand for the derived category of $\A$, which we obtain from $\CpxA$ by formally inverting all quasi-isomorphisms of dg modules.
\end{itemize}

\begin{example} \label{expl:dg_modules_ring}
Note that if $R$ is a ring and $\A$ is the dg category with one object whose endomorphism ring is $R$ with the trivial grading, then $\CpxA$ is equivalent to $\Cpx\ModR$ and $\DerA$ is triangle equivalent to $\Der\ModR$.

With this in mind, it is useful to mention a more general version of Example~\ref{expl:efficient-cpx}. Given a small dg category $\A$, we can equip $\CpxA$ with the semisplit exact structure: Conflations are defined to be those short exact sequences $0 \to X \to Y \to Z \to 0$ of dg modules which are split as short exact sequences of graded $\A$-modules. The resulting exact structure satisfies Hypothesis~\ref{hyp:exact}.
\end{example}

The following result by Keller is crucial for understanding compactly generated algebraic triangulated categories.

\begin{proposition} \label{prop:alg_comp_gen}
Let $\T$ be a triangulated category with small coproducts. Then the following are equivalent:
\begin{ii}
\item $\T$ is algebraic and compactly generated.
\item $\T$ is triangle equivalent to $\DerA$ for a small dg category $\A$.
\end{ii}
\end{proposition}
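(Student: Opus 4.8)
The plan is to prove the equivalence of Proposition~\ref{prop:alg_comp_gen} in two directions, relying on the structural description of algebraic triangulated categories via Frobenius categories and on Keller's theory of dg modules and their derived categories.

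\textbf{Direction (ii)$\Rightarrow$(i).} First I would observe that $\CpxA$, equipped with the semisplit exact structure discussed in Example~\ref{expl:dg_modules_ring}, is a Frobenius exact category whose projective-injective objects are precisely the contractible dg modules (the ``disk'' modules), and that its stable category is the homotopy category $\Htp{\A}$ of dg modules. Since $\DerA$ is the Verdier quotient of $\Htp{\A}$ by the acyclic dg modules, which itself can be realized as the stable category of a suitable Frobenius subcategory (the one whose projective-injectives are the $\clH$-projective, or equivalently $\clH$-injective, dg modules together with contractibles), one gets that $\DerA$ is algebraic. Concretely, one may take the exact category of $\clH$-injective (fibrant) dg modules with the componentwise-split conflations that have fibrant cokernel — this is Frobenius and its stable category is triangle equivalent to $\DerA$. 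For compact generation, I would note that the representable dg modules $\A(-,X)$ for $X$ an object of $\A$ form a set of compact objects (compactness follows from $\Hom_{\DerA}(\A(-,X),M) \cong H^0(M(X))$ commuting with coproducts), and they generate: if $M \in \DerA$ is nonzero then some $H^n(M(X)) \neq 0$, which gives a nonzero map from a shift of $\A(-,X)$ to $M$.

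\textbf{Direction (i)$\Rightarrow$(ii).} Suppose $\T = \underline{\C}$ for a Frobenius category $\C$, and $\T$ is compactly generated by a set $\clS$ of compact objects. The standard construction (due to Keller) is to form the dg category $\A$ whose objects are the objects of $\clS$ (lifted to $\C$, possibly after closing $\clS$ under shifts), and whose morphism complexes are the Hom-complexes computed in a dg enhancement of $\underline\C$ coming from $\C$ — explicitly, using the dg category of acyclic-complex-valued or rather the dg structure on $\C$ provided by a complete resolution / the Frobenius structure. One then has a natural dg functor sending $\T$ to $\DifA$ via $Y \mapsto \underline{\C}(-, Y)|_{\clS}$, inducing a triangulated functor $F\dd \T \to \DerA$. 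The key points to check are: $F$ sends each $S \in \clS$ to the representable $\A(-,S)$; $F$ commutes with coproducts; $F$ restricted to $\clS$ is fully faithful (by construction of $\A$); hence $F$ is fully faithful on the localizing subcategory generated by $\clS$, which is all of $\T$ by compact generation; and $F$ is essentially surjective because its image is a localizing subcategory containing all representables $\A(-,S)$, which generate $\DerA$. Therefore $F$ is a triangle equivalence.

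\textbf{Main obstacle.} I expect the technical heart to be the construction of the dg enhancement of $\underline{\C}$ and the verification that $F$ is well-behaved — specifically, producing a dg category $\A$ together with a dg functor to $\DifA$ such that the induced functor on homotopy/derived categories agrees with $\T \to \DerA$ up to natural isomorphism, and that it is fully faithful on $\clS$. This is precisely where Keller's machinery (the use of $\C$-acyclic complexes, or equivalently the dg-categorical model of $\underline\C$) is needed, and it is the only genuinely substantive step; the rest is formal manipulation with localizing subcategories, compact generation, and the fact that a coproduct-preserving triangulated functor which is fully faithful on a generating set of compacts and essentially surjective is an equivalence. Since the excerpt explicitly cites~\cite{KellerDG} as the reference for all of this, I would present the proof as an appeal to that source for the enhancement together with the generation argument sketched above, rather than reproving Keller's theorem from scratch.

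\begin{proof}
We only sketch the argument, referring to~\cite{KellerDG} for the details of the dg-categorical constructions.

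\smallskip
(ii)$\Rightarrow$(i): Let $\A$ be a small dg category. By Example~\ref{expl:dg_modules_ring}, $\CpxA$ carries the semisplit exact structure, and with this structure it is a Frobenius category whose projective-injective objects are the contractible dg modules; its stable category is the homotopy category of dg modules $\Htp\A$. The derived category $\DerA$ is the Verdier quotient of $\Htp\A$ by the thick subcategory of acyclic dg modules. Passing to the full subcategory of $\clH$-injective (fibrant) dg modules and endowing it with the componentwise-split conflations having fibrant cokernel yields again a Frobenius exact category whose stable category is triangle equivalent to $\DerA$; hence $\DerA$ is algebraic. For each object $X$ of $\A$ the representable dg module $\A(-,X)$ is compact in $\DerA$, since $\Hom_{\DerA}(\A(-,X),M)\cong H^0(M(X))$ and cohomology commutes with coproducts in $\CpxA$. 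These objects generate: if $M\in\DerA$ is nonzero, then $H^n(M(X))\neq 0$ for some $X$ and some $n$, giving a nonzero morphism $\Sigma^{-n}\A(-,X)\to M$. Thus $\DerA$ is compactly generated.

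\smallskip
(i)$\Rightarrow$(ii): Write $\T=\underline\C$ for a Frobenius category $\C$, compactly generated by a set $\clS$ of compact objects, which we may assume closed under suspensions. Following~\cite{KellerDG}, one constructs from $\C$ a small dg category $\A$ whose objects are (lifts to $\C$ of) the objects of $\clS$ and whose morphism complexes compute the graded Hom-spaces $\bigoplus_n\underline\C(S,\Sigma^n S')$, together with a dg functor realizing $Y\mapsto\underline\C(-,Y)|_{\clS}$, which induces a coproduct-preserving triangulated functor $F\dd\T\to\DerA$ sending each $S\in\clS$ to the representable $\A(-,S)$ and which is fully faithful on $\clS$. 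Since both $\clS$ and the representables $\{\A(-,S)\mid S\in\clS\}$ are sets of compact generators, and $F$ commutes with coproducts and is fully faithful on $\clS$, a standard argument shows $F$ is fully faithful on the localizing subcategory generated by $\clS$, which is all of $\T$. Likewise the essential image of $F$ is a localizing subcategory containing all representables, hence equals $\DerA$. Therefore $F$ is a triangle equivalence.
\end{proof}
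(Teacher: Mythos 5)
Your proof is correct and follows the same route as the paper, which simply cites Keller~\cite{KellerDG} for (ii)$\Rightarrow$(i) and Krause~\cite[Theorem 7.5]{KrauseChicago} (refining Keller~\cite[Theorem 4.3]{KellerDG}) for (i)$\Rightarrow$(ii). Your sketch fills in the content of those citations — the Frobenius model of $\DerA$ via a suitable class of dg modules with the semisplit exact structure, compactness of representables, and Keller's dg enhancement of $\underline\C$ on a set of compact generators — so it matches the paper's approach with more detail spelled out; the only cosmetic remark is that the paper itself works with cofibrant ($K$-projective) dg modules in Remark~\ref{rem:cofibrant}, whereas you use the dual $K$-injective (fibrant for the injective model structure) variant, which is equally valid.
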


\begin{proof}
The implication (ii)~$\implies$~(i) follows from the basic properties of $\DerA$ discussed in~\cite[\S4.1]{KellerDG} (see also Remark~\ref{rem:cofibrant}). Conversely, (i)~$\implies$~(ii) has been proved in \cite[Theorem 7.5]{KrauseChicago}, which slightly improves the original argument in~\cite[Theorem 4.3]{KellerDG}.
\end{proof}

\subsection{Projective model structure for $\DerA$}
\label{subsec:model}

We will need a more explicit description of the derived category $\DerA$. First we formalize an easy observation.

\begin{lemma} \label{lem:CA_modules}
Let $\A$ be a small dg category. Then there exists a small preadditive category $\R$ \st $\CpxA$ is equivalent to $\ModRR$.
\end{lemma}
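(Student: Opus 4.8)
The plan is to realize $\CpxA$ as a category of additive functors on a suitably chosen small preadditive category $\R$, so that the identification $\CpxA \simeq \ModRR$ becomes an instance of the standard equivalence between modules over a category of finitely presented projectives and functors on that category. The natural candidate for $\R$ is a ``de-graded'' or ``linearized'' version of $\A$ that encodes not only the objects of $\A$ but also the suspensions (shifts) of the free dg modules $\A(-,X)$, together with enough morphisms to record the differential.

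First I would describe $\R$ explicitly. Its objects are the pairs $(X,n)$ with $X \in \A$ and $n \in \Z$, which we think of as the shifted representable dg modules $\Sigma^n \A(-,X)$. The morphism group $\R\big((X,m),(Y,n)\big)$ should be defined so as to be naturally isomorphic to $\CpxA\big(\Sigma^m \A(-,X), \Sigma^n \A(-,Y)\big)$; by the dg Yoneda lemma this is the degree-$(n-m)$ component of the complex $\A(X,Y)$, but to see that a morphism of dg modules (not just a graded one) is being recorded one must also track compatibility with the differential. The clean way to do this is to take $\R$ to be the category whose objects are $\{(X,n) \mid X \in \A,\ n \in \Z\}$ and to set $\R\big((X,m),(Y,n)\big)$ to be the group of pairs $(f_0,f_1)$ with $f_0 \in \A(X,Y)^{n-m}$ and $f_1 \in \A(X,Y)^{n-m-1}$ subject to the relation $d(f_1) = 0$ appropriately — in other words, $\R$ is the category $Z^0$ of the dg category of shifted representables, which is precisely $\GraA$-restricted-to-representables equipped with the cocycle condition. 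Composition is inherited from that of $\A$ twisted by the differential (the usual ``twisted complex'' composition formula), and one checks associativity and that identities exist directly from the Leibniz rule in Definition~\ref{def:dg}.

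Next I would set up the equivalence $\CpxA \xrightarrow{\sim} \ModRR$. Send a dg module $M$ to the additive functor $\R^\op \to \Ab$ given on objects by $(X,n) \mapsto Z^0\big(\DifA(\Sigma^n \A(-,X), M)\big) \cong M(X)^{n}$, i.e.\ the degree-$n$ component of $M(X)$, with the $\R$-action reassembling the graded module structure and the differential of $M$. Functoriality in $M$ is immediate. To produce a quasi-inverse, from an additive functor $F\dd \R^\op \to \Ab$ one reconstructs a graded $\A$-module whose $X$-component in degree $n$ is $F(X,n)$, and one reads off the differential from the action of the distinguished morphisms in $\R$ that relate $(X,n)$ and $(X,n-1)$; the cocycle condition built into $\R$ guarantees that this differential squares to zero, and compatibility with composition in $\A$ is exactly the functoriality of $F$. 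One then verifies that these two assignments are mutually inverse up to natural isomorphism, and that they are additive and exact, so in particular they match up the conflations on both sides (the split-exact structure on $\CpxA$ corresponds to the levelwise split exact structure inherited from $\Ab$, consistent with Example~\ref{expl:dg_modules_ring}).

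The main obstacle, and the only genuinely delicate point, is getting the morphism sets and composition in $\R$ exactly right so that a degree-zero natural transformation of functors corresponds to a genuine chain map of dg modules, not merely to a morphism of underlying graded modules. This is the place where the twisting by the differential of $\A$ enters, and it must be done carefully enough that associativity of composition in $\R$ holds on the nose; the sign bookkeeping from the Leibniz rule is the part most likely to need attention. Once $\R$ is correctly defined, the equivalence itself is the usual Yoneda-style argument and goes through routinely. I would also remark that $\R$ is small because $\A$ is small and $\Z$ is a set, which is what the statement requires.
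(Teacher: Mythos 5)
Your overall plan — build a small preadditive category $\R$ out of compact projective generators of $\CpxA$ and apply the Gabriel--Mitchell recognition theorem — is essentially the route the paper takes, but the specific $\R$ you construct does not work, and the problem is more serious than sign bookkeeping. The shifted representables $\Sigma^n\A(-,X)$ are \emph{not} projective objects of the abelian category $\CpxA$: the functor $M \mapsto \CpxA\big(\Sigma^n\A(-,X), M\big)$ computes the degree-$n$ cocycles $Z^n\big(M(X)\big)$, not the full degree-$n$ component $M(X)^n$, and passing to cocycles is only left exact. In particular, your claimed identification $Z^0\big(\DifA(\Sigma^n\A(-,X), M)\big) \cong M(X)^n$ is false; by the dg Yoneda lemma the left-hand side is $Z^n\big(M(X)\big)$. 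Since your $\R$ is not made of projectives, the Yoneda functor $M \mapsto \CpxA(-,M)|_\R$ is not exact and cannot be an equivalence. (Your description of the hom groups is also internally inconsistent: ``$Z^0$ of the dg category of shifted representables'' would yield a single cocycle $f \in \A(X,Y)^{n-m}$ with $d(f)=0$, not a pair $(f_0,f_1)$ with a constraint on $f_1$ only; neither version is what you actually need.)

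The objects that \emph{do} represent the exact functors $M \mapsto M(X)^n$ are the ``disk'' or ``cone'' dg modules $F_\lambda\big(\Sigma^n\A(-,X)\big)$, where $F_\lambda \dd \GraA \to \CpxA$ is the left adjoint of the forgetful functor $F \dd \CpxA \to \GraA$: by adjunction and ordinary Yoneda in $\GraA$ one has $\CpxA\big(F_\lambda(\Sigma^n\A(-,X)), M\big) \cong \GraA\big(\Sigma^n\A(-,X), F(M)\big) \cong M(X)^n$ naturally, and these are compact projectives precisely because $F$ is exact, faithful and preserves small coproducts. With this corrected $\R$ the hom group $\R\big((X,m),(Y,n)\big)$ comes out as $\A(X,Y)^{n-m} \oplus \A(X,Y)^{n-m-1}$ with a composition twisted by the differential but with \emph{no} cocycle constraint, which is the thing your $(f_0,f_1)$ formula was groping towards. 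The paper avoids writing $\R$ down explicitly at all: it simply observes that $F_\lambda$ carries compact projective generators of $\GraA$ (which are the shifted representables) to compact projective generators of $\CpxA$, so $\CpxA$ has a set of compact projective generators and is therefore equivalent to a module category. That abstract packaging is exactly what your argument is missing.
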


\begin{proof}
Clearly, $\CpxA$ is an abelian category and, by the proof of~\cite[Lemma 2.2]{KellerDG}, the forgetful functor $F\dd \CpxA \to \GraA$ has a left adjoint $F_\lambda\dd \GraA \to \CpxA$. Since $F$ is exact, faithful and preserves small coproducts, any set $\clP$ of compact (in the sense of Definition~\ref{def:comp_gen_triang}) projective generators for $\GraA$ is sent by $F_\lambda$ to a set $\R$ of compact projective generators of $\CpxA$.

Since $\GraA$ clearly has a set of compact projective generators, so has it $\CpxA$. Finally, we use the standard fact that if $\R$ is such a set, then the Yoneda functor
\[ Y\dd \CpxA \to \ModRR, \quad X \mapsto \CpxA(-,X)|_\R, \]
is an equivalence of categories.
\end{proof}

Now we are in a position to give a description of $\Der\A$ in terms of a model structure on $\CpxA$. We recall that a \emph{model structure} on a category is a triple $(\mathrm{Cof},\mathrm{W},\mathrm{Fib})$ of classes of morphisms, called \emph{cofibrations}, \emph{weak equivalences} and \emph{fibrations}, respectively, which satisfy certain axioms. As the axioms and other basic terminology are standard, we refer to~\cite[Definition 1.1.3]{Hov99} or \cite[Definition 7.1.3]{Hirsch03} for details.

Since $\CpxA$ is an abelian (and even a module) category, we may (and will) consider only so-called \emph{abelian model structures} on $\CpxA$. That is, we require compatibility of the model structure with the abelian exact structure on $\CpxA$ in the sense of~\cite[Definition 2.1]{Hov02} (see also~\cite[Proposition 4.2]{Hov02}):
\begin{ii}
\item cofibrations are precisely monomorphisms with cofibrant cokernels,
\item fibrations are precisely epimorphisms with fibrant kernels.
\end{ii}

Hovey~\cite[Theorem 2.2]{Hov02} proved that an abelian exact structure on a given abelian category is determined by the triple $(\C,\W,\F)$, where $\C$ is the class of cofibrant, $\W$ is the class of trivial and $\F$ the class of fibrant objects. Here, an object $X$ is called \emph{trivial} if $0 \to X$ is a weak equivalence. Moreover, he also characterized the triples $(\C,\W,\F)$ which do come from an abelian model structure as those satisfying the conditions:
\begin{ii}
\item $\W$ is closed under summands and has the 2-out-of-3 property \wrt short exact sequences,
\item $(\C,\W\cap\F)$ and $(\C\cap\W,\F)$ are functorially complete cotorsion pairs.
\end{ii}

A very nice introduction to abelian model structures is also given in~\cite[\S1]{Beck12}. In particular, a proof of the following crucial folklore result can be found there.

\begin{proposition} \label{prop:models_for_dg} \cite[Proposition 1.3.5(1)]{Beck12}
Let $\A$ be a small dg category. Then there exists a unique abelian exact structure on $\CpxA$ \st the weak equivalences are precisely the quasi-isomorphisms and every object is fibrant.

In particular, the trivially cofibrant objects are the projective ones in $\CpxA$, and the homotopy category $\Ho\CpxA$ for this model structure is equivalent to $\DerA$.
\end{proposition}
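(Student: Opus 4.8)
The plan is to apply Hovey's correspondence between abelian model structures and compatible triples $(\C,\W,\F)$, using the cotorsion-pair machinery of Section~\ref{sec:approx}. First I would fix the candidate triple: take $\W$ to be the class of \emph{acyclic} dg modules (those $X$ with $H^n(X)=0$ for all $n$), take $\F = \CpxA$ (every object fibrant), and take $\C = {}^{\perp}\W$, i.e.\ the left $\Ext1{}$-orthogonal of $\W$ in the abelian exact structure on $\CpxA$. One then has to verify Hovey's conditions. That $\W$ is closed under summands and has the $2$-out-of-$3$ property with respect to short exact sequences is immediate from the long exact homology sequence. Since $\F$ is everything, the pair $(\C\cap\W,\F)$ is trivially a (functorially complete) cotorsion pair with $\C\cap\W = {}^{\perp}\CpxA$ being the projectives of $\CpxA$; completeness here is just the statement that $\CpxA$ has enough projectives, which holds because by Lemma~\ref{lem:CA_modules} it is a module category $\ModRR$.

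The substantive point is that $(\C,\W)$ is a \emph{functorially complete} cotorsion pair and that $\W$ is its right-hand class, i.e.\ $\W = \C^{\perp}$. This is where I would invoke Proposition~\ref{prop:completeness}. By Example~\ref{expl:dg_modules_ring} (or Example~\ref{expl:efficient-cpx}) the category $\CpxA$ with its \emph{abelian} exact structure satisfies Hypothesis~\ref{hyp:exact}. I would exhibit a \emph{set} $\clS$ of dg modules such that $\W = \{B \mid \Ext1{\CpxA}{S}{B}=0 \text{ for all } S\in\clS\}$ and such that $\Filt\clS$ is generating; then Proposition~\ref{prop:completeness} gives at once that $(\A,\B)=(\C,\W)$ is a functorially complete cotorsion pair with $\C$ equal to the summand-closure of $\Filt\clS$. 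The natural choice of $\clS$: for the module category $\CpxA = \ModRR$, let $P$ run over a set of projective generators (the objects $\R$ from Lemma~\ref{lem:CA_modules}); the disc complexes $D^n(P)$ (with $P$ in degrees $n-1,n$ and identity differential) are projective, hence harmless, and one checks that for a representing set of ``sphere-type'' generators the orthogonal class is precisely the acyclic modules — concretely, $\Ext1{\CpxA}{S^{n}(P)}{B}$ with suitable $S^n(P)$ detects $H^{n}(B)(P)$. The generating property of $\Filt\clS$ follows because the disc complexes $D^n(P)$ already form a generating set of projectives and lie in $\Filt\clS$ (a disc is an extension of two spheres).

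With the two cotorsion pairs in hand, Hovey's theorem~\cite[Theorem 2.2]{Hov02} (as recalled in the excerpt just before the statement) produces an abelian model structure on $\CpxA$ whose weak equivalences are exactly the maps with acyclic ``kernel/cokernel'' in the $\W$-sense, i.e.\ the quasi-isomorphisms, and in which every object is fibrant; its homotopy category is therefore $\DerA$ by construction of the derived category as the localization at quasi-isomorphisms. Uniqueness is clear: an abelian model structure is determined by its triple $(\C,\W,\F)$, the requirement that every object be fibrant forces $\F=\CpxA$, and the requirement that the weak equivalences be the quasi-isomorphisms forces $\W$ to be the acyclic objects (an object is trivial iff $0\to X$ is a weak equivalence iff $X$ is acyclic), whence $\C = {}^{\perp}\W$ is also determined. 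The final sentence of the statement — that the trivially cofibrant objects are the projectives — is just the identification $\C\cap\W = {}^{\perp}\CpxA$ made above. The main obstacle I anticipate is purely bookkeeping: pinning down the generating set $\clS$ so that its orthogonal is \emph{exactly} the acyclic modules and confirming $\Filt\clS$ is generating; once Proposition~\ref{prop:completeness} applies, everything else is formal.
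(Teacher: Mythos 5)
Your approach is correct and matches the cited source: the paper offers no proof of its own, only the citation to \cite[Proposition 1.3.5(1)]{Beck12}, and Becker's argument is exactly the Hovey--cotorsion-pair strategy you describe (verify Hovey's conditions for the triple $(\C,\W,\F)$, with the substantive step being that $(\C,\W)$ is a functorially complete cotorsion pair via Proposition~\ref{prop:completeness}). Two slips are worth correcting. First, the description of the generating set $\clS$ is muddled: you take $P$ to range over the objects of $\R$ from Lemma~\ref{lem:CA_modules}, but these \emph{are already} the projective ``disc-like'' generators of $\CpxA$ (the $F_\lambda$-images of graded representables), not objects of some underlying additive category from which complexes are built, so ``$D^n(P)$ with $P$ in degrees $n-1,n$ and identity differential'' has no meaning for a general dg category. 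The set you want is $\clS = \{A\hat~[n] \mid A \in \A,\ n \in \Z\}$, the shifted representable (``free'') dg modules, as the paper spells out in Remark~\ref{rem:cofibrant}. Keller's conflation $0 \to A\hat~[-1] \to F_\lambda F(A\hat~) \to A\hat~ \to 0$, which has projective middle term, then identifies $\Ext{1}{\CpxA}{A\hat~[n]}{X}$ with a shifted cohomology group of $X(A)$, so $\clS^\perp$ is exactly the class of acyclic dg modules; and $\Filt\clS$ is generating because it contains each $F_\lambda F(A\hat~)$. Second, Examples~\ref{expl:efficient-cpx} and~\ref{expl:dg_modules_ring} concern the \emph{semisplit} exact structure, not the abelian one; that the abelian exact structure on $\CpxA$ satisfies Hypothesis~\ref{hyp:exact} follows instead from Example~\ref{expl:efficient-mod} via Lemma~\ref{lem:CA_modules}, since $\CpxA$ is equivalent to a module category $\ModRR$ and hence is Grothendieck.
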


\begin{definition} \label{def:cofibrant}
From now on, an object $X \in \CpxA$ will be called \emph{cofibrant} if it is cofibrant exclusively \wrt the particular model structure from Proposition~\ref{prop:models_for_dg}. A complex $X \in \Cpx\ModR$, where $R$ is a ring, will be called \emph{cofibrant} if it is cofibrant in $\CpxA$ as in Example~\ref{expl:dg_modules_ring}.
\end{definition}

\begin{remark} \label{rem:cofibrant}
The description of cofibrant dg modules or complexes is not straightforward. Spaltenstein~\cite{Spa88} calls them $K$-projective while Keller~\cite[p. 69]{KellerDG} describes them using ``property (P)''. A very similar description to Keller's is obtained, if we combine \cite[Proposition 1.3.5(1)]{Beck12} with Proposition~\ref{prop:completeness}: A dg module is cofibrant \iff it is a summand of an $\clS$-filtered dg module, where $\clS = \{A\hat~[n] \mid A \in \A \textrm{ and } n \in \Z \}$ and $A\hat~$ are the ``free'' dg modules in the sense of~\cite[\S1.2]{KellerDG}.

In fact, now it becomes easy to see how the previously defined concepts relate together. Given a small dg category $\A$, the class $\C$ of cofibrant dg modules forms a deconstructible subcategory of $\CpxA$ by Propositions~\ref{prop:completeness}(ii) and~\ref{prop:deconstr_intersect}(i). Hence $\C$ satisfies Hypothesis~\ref{hyp:exact} by Lemma~\ref{lem:deconstr_basic}(i). It is easily checked that the abelian and the semisplit (Example~\ref{expl:dg_modules_ring}) exact structures coincide when restricted to $\C$, so that $\C$ is also a Frobenius exact category by~\cite[Lemma 2.2]{KellerDG}. The category $\DerA$ is then obtained as the stable category $\underline\C$, which can also be seen from the abelian model structure with help of~\cite[Proposition 1.1.15 and Corollary 1.1.16]{Beck12}.
\end{remark}

As a consequence of the existence of a model structure for $\DerA$, we also get the following connection to stable derivators from~\S\ref{subsec:derivators}. We also remark that analogous less general, but in our case completely relevant results can be found in~\cite[Appendice]{Malt07} and~\cite[Proposition 1.30]{Gr12}.

\begin{proposition} \label{prop:alg_comp_gen_deriv} \cite{Cis03}
Let $\T$ be a compactly generated algebraic triangulated category. Then there is a stable derivator $\bbD\dd Cat^\op \to CAT$ \st $\T = \bbD(e)$ and $\bbD(I)$ has small coproducts for each small category $I$.
\end{proposition}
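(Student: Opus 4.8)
The plan is to realize $\T$ as the base of a derivator coming from a model category, using the theorem of Cisinski that (sufficiently nice) model categories give rise to derivators. First I would apply Proposition~\ref{prop:alg_comp_gen} to replace $\T$ by $\DerA$ for some small dg category $\A$, and Lemma~\ref{lem:CA_modules} to identify the ambient abelian category $\CpxA$ with a module category $\ModRR$ over a small preadditive category; in particular $\CpxA$ is a Grothendieck category, hence locally presentable and cocomplete. Proposition~\ref{prop:models_for_dg} then endows $\CpxA$ with an abelian model structure whose weak equivalences are the quasi-isomorphisms, in which every object is fibrant, and whose homotopy category $\Ho\CpxA$ is triangle equivalent to $\DerA$. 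The point worth recording at this stage is that this model structure is \emph{cofibrantly generated}: it is produced via the small object argument from \emph{sets} of maps, since by Remark~\ref{rem:cofibrant} its cofibrant objects are the summands of $\clS$-filtered dg modules for the explicit set $\clS$ there, and its trivially cofibrant (that is, projective) objects are summands of coproducts of the representable dg modules; cf.\ Proposition~\ref{prop:completeness}. Combined with local presentability, this says that $\CpxA$ is a \emph{combinatorial} model category.

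Next I would invoke the results of Cisinski~\cite{Cis03} (less general but, in our case, entirely sufficient variants of which also appear in~\cite[Appendice]{Malt07} and~\cite[Proposition~1.30]{Gr12}): for a combinatorial model category, every diagram category $\CpxA^{I^\op}$ with $I$ a small category carries the projective model structure, with objectwise weak equivalences and fibrations, and the assignment $\bbD(I) = \Ho{\CpxA^{I^\op}}$ extends to a strict $2$-functor $\bbD\dd Cat^\op \to CAT$ satisfying the axioms of a derivator. Concretely: the coproduct axiom and the joint conservativity of the point evaluations $\bbD(I) \to \prod_{i \in I}\bbD(e)$ are formal; the homotopy left and right Kan extensions along any $u\dd I \to J$ provide the left and right adjoints $u_!$ and $u_*$ to $u^* = \bbD(u)$; the base-change axiom follows from the computation of these homotopy Kan extensions over comma categories; and the axiom on the partial diagram functor $\bbD(I \times \Delta_1) \to \bbD(I)^{\Delta_1}$ follows from the compatibility and functoriality of the projective model structures. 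By construction $\bbD(e) = \Ho\CpxA \simeq \DerA = \T$.

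It then remains to verify the two extra properties demanded in the statement. For stability, note that $\CpxA$ is pointed and that the suspension functor induced on the base $\bbD(e) = \DerA$ is the shift $X \mapsto X[1]$, which is an autoequivalence of any triangulated category; equivalently, in $\DerA$ a commutative square is homotopy cartesian exactly when it is homotopy cocartesian, since a distinguished triangle stays distinguished after rotation. By the standard characterization of stable derivators among pointed ones (see~\cite{Gr12}), $\bbD$ is stable. For the small coproducts, each $\CpxA^{I^\op}$ is again cocomplete — colimits in functor categories are formed objectwise and $\CpxA$ is cocomplete — and the homotopy category of any cocomplete model category has all small coproducts, obtained by cofibrantly replacing and taking the coproduct there; in fact, as the excerpt notes, the presence of coproducts in $\bbD(I)$ is already part of Groth's notion of derivator.

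The substantial input here is Cisinski's theorem, which I would treat as a black box. The genuine work, were one to write out a self-contained argument, would be to pin down in~\cite{Cis03} the precise hypotheses under which a model category yields a \emph{full} derivator over $Dia = Cat$ (rather than merely a left or a right derivator), to confirm that a combinatorial model category meets them, and to carry out the strictification needed so that $I \mapsto \Ho{\CpxA^{I^\op}}$ becomes an honest strict $2$-functor. Everything else above is either a direct appeal to the results already established in this section or a routine homotopical check.
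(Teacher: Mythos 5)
Your argument is correct and follows the same route the paper intends: the paper gives no proof beyond the citation to Cisinski and the surrounding remark that it is "a consequence of the existence of a model structure for $\DerA$", and you have simply filled in the details of that citation by observing that $\CpxA$ with the model structure of Proposition~\ref{prop:models_for_dg} is combinatorial and then invoking Cisinski's theorem, followed by the routine checks of stability and coproducts. The only genuine content you add is making explicit that the model structure is cofibrantly generated (via Proposition~\ref{prop:completeness}) and that $\CpxA$ is locally presentable (via Lemma~\ref{lem:CA_modules}), both of which are accurate and exactly what is needed to meet the hypotheses of the cited result.
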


\section{Compactly generated Hom-orthogonal pairs}
\label{sec:comp_gen}

\subsection{Completeness and structure results}
\label{subsec:structure}

The central concept of this text is a common generalization of $t$-structures~\cite{BBD81} and co-$t$-structures (also known as weight structures)~\cite{Bond10,MSSS11,Pauk08}, whose definitions are recalled later in Section~\ref{sec:co-t-str}.

\begin{definition} \label{def:Hom-pair} \cite[Definition 3.2]{SaSt11}
Let $\T$ be an additive category and $\clS$ be a class of objects. We denote $\clS^\perp = \{B \in \T \mid \T(S,B) = 0 \textrm{ for all } S \in \clS\}$ and dually ${^\perp \clS} = \{A \in \T \mid \T(A,S) = 0 \textrm{ for all } S \in \clS\}$. Then we call a pair $(\A,\B)$ of full subcategories a \emph{Hom-orthogonal pair} if $\A^\perp = \B$ and $\A = {^\perp \B}$. Given $\clS \subseteq \A$ \st $\B = \clS^\perp$, we say that the Hom-orthogonal pair is \emph{generated by $\clS$}.
\end{definition}

Note that given any set of objects $\clS \subseteq \T$, the pair $(\A,\B) = ({^\perp (\clS^\perp)}, \clS^\perp)$ is always a Hom-orthogonal pair generated by $\clS$.

In our context, $\T$ will usually be a triangulated category. Hom-orthogonal pairs in such categories are typically only useful if they are complete in the following sense:

\begin{definition} \label{def:complete}
Let $\T$ be a triangulated category. A Hom-orthogonal pair $(\A,\B)$ in $\T$ is called \emph{complete} if for each $X \in \T$ there is a (not necessarily unique) triangle
\[ A \la X \la B \la \Sigma A \]
with $A \in \A$ and $B \in \B$. Similarly to Definition~\ref{def:cotorsion}, we will call such a triangle an \emph{approximation triangle} for $X$.
\end{definition}

One readily sees that the map $A \to X$ is a right $\A$-approximation while $X \to B$ is a left $\B$-approximation. As discussed in~\cite[\S3]{SaSt11}, both $t$-structures and co-$t$-structures are essentially none other than complete Hom-orthogonal pairs with extra closure properties: $\Sigma\A \subseteq \A$ in the case of $t$-structures and $\Sigma\inv\A \subseteq \A$ in the case of co-$t$-structures.

\begin{remark} \label{rem:torsion_pairs}
Complete Hom-orthogonal pairs in triangulated categories have been recently called \emph{torsion pairs} or \emph{torsion theories} it the literature, for instance in~\cite[Definition 2.2]{IY08} or~\cite{AI12}. We prefer to avoid the term here since the approximation triangle is far from being unique in general, causing some well-known arguments about torsion pairs in abelian categories to fail. One should also beware that torsion pairs according to~\cite[Definition I.2.1]{BR07} are in fact precisely $t$-structures.
\end{remark}

If the triangulated category $\T$ is algebraic, the relation to cotorsion and complete cotorsion pairs is straightforward.

\begin{lemma} \label{lem:cotorsion_vs_Hom} \cite[Proposition 3.3]{SaSt11}
Let $\C$ be a Frobenius exact category and $\T = \underline\C$ the corresponding algebraic triangulated category. Then the assignment
\[ (\A,\B) \longmapsto (\A,\Sigma\B) \]
gives a bijective correspondence between cotorsion pairs in $\C$ and Hom-orthogonal pairs in $\T$.

Moreover, $(\A,\B)$ is a complete cotorsion pair in $\C$ \iff $(\A,\Sigma\B)$ is a complete Hom-orthogonal pair in $\T$.
\end{lemma}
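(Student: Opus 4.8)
The plan is to translate between cotorsion pairs in the Frobenius exact category $\C$ and Hom-orthogonal pairs in its stable category $\T = \underline\C$ via the standard identity $\Ext1\C XY \cong \underline\C(X,\Sigma\inv Y) \cong \T(\Sigma X, Y)$, which holds in any Frobenius exact category. First I would recall the basic fact: since projective objects in $\C$ are $\Ext$-acyclic and coincide with injectives, for any $X,Y \in \C$ there is a natural isomorphism $\Ext1\C XY \cong \T(X,\Sigma Y)$, where $\Sigma$ is the suspension of $\T$ (given by the cosyzygy in $\C$, i.e.\ $Y \hookrightarrow I \twoheadrightarrow \Sigma Y$ with $I$ injective). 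Then $\Ext1\C XY = 0$ \iff $\T(X,\Sigma Y) = 0$. Using this, a pair of full subcategories $(\A,\B)$ of $\C$ satisfies $\A = \{X \mid \Ext1\C X{B'} = 0 \ \forall B' \in \B\}$ if and only if $\A = \{X \mid \T(X, \Sigma B') = 0 \ \forall B' \in \B\} = {}^\perp(\Sigma\B)$ in $\T$, and symmetrically for the other half, which shows $(\A,\B)$ is a cotorsion pair in $\C$ \iff $(\A,\Sigma\B)$ is a Hom-orthogonal pair in $\T$ (note $\Sigma$ is an autoequivalence of $\T$, so $\Sigma\B$ ranges over all full subcategories as $\B$ does, and the passage is clearly bijective with inverse $(\A,\B') \mapsto (\A,\Sigma\inv\B')$). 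One small point to check is that the subcategories appearing are closed under the projective/injective objects and hence are well-defined full subcategories of $\T$; this is automatic since projectives lie in both $\A$ and $\B$ for a cotorsion pair (as $\Ext1\C$ vanishes on them) and map to $0$ in $\T$.

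For the completeness statement, I would argue that the two short exact approximation sequences of a complete cotorsion pair in $\C$ assemble into an approximation triangle in $\T$, and conversely. Given $X \in \C$ and a conflation $0 \to B_X \to A_X \xrightarrow{f_X} X \to 0$ with $A_X \in \A$, $B_X \in \B$, this conflation becomes a triangle $B_X \to A_X \xrightarrow{f_X} X \to \Sigma B_X$ in $\T$ (every conflation in a Frobenius category induces a triangle in the stable category, via the standard construction). Rotating, we get a triangle $A_X \xrightarrow{f_X} X \to \Sigma B_X \to \Sigma A_X$; since $A_X \in \A$ and $\Sigma B_X \in \Sigma\B$, this is precisely an approximation triangle for $X$ with respect to $(\A,\Sigma\B)$. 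This shows that completeness of the cotorsion pair (existence of the first approximation sequence) implies completeness of the Hom-orthogonal pair. Conversely, given an approximation triangle $A \to X \to B' \to \Sigma A$ in $\T$ with $A \in \A$ and $B' \in \Sigma\B$, one can use the standard fact that any triangle in $\underline\C$ is isomorphic to one induced by a conflation in $\C$; pulling this back and using that $B' = \Sigma B$ with $B \in \B$, together with the fact that $\A$ and $\B$ contain all projective-injectives so the conflation can be adjusted to have terms exactly in $\A$ and $\B$ after adding a projective summand, yields the conflation $0 \to B \to A \to X \to 0$ up to the usual modification. One then observes the second (dually constructed) approximation sequence likewise corresponds to the rotation of the same triangle in the other direction, so no extra work is needed — both sequences of the complete cotorsion pair and the single approximation triangle carry the same information.

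The main obstacle I expect is the bookkeeping around adding or absorbing projective-injective summands when passing between a conflation in $\C$ and an abstract triangle in $\T = \underline\C$: a triangle in the stable category is only isomorphic (not equal) to one coming from a conflation, and the ``error'' is precisely a direct summand that is projective-injective, hence zero in $\T$ but not in $\C$. One must check that such a modification does not disturb membership in $\A$ or $\B$ — but this is exactly guaranteed by the observation that for a cotorsion pair in $\C$ both $\A$ and $\B$ are closed under direct summands and contain all projective-injective objects (the latter because $\Ext1\C$ vanishes whenever either argument is projective-injective). With that closure in hand, the translation is routine. I would also note explicitly that since this equivalence is realized simply by applying the autoequivalence $\Sigma$ to the second component and the two $\Ext$-orthogonality conditions match the two $\T$-orthogonality conditions verbatim under the isomorphism $\Ext1\C XY \cong \T(\Sigma X, Y)$, both the correspondence and its compatibility with completeness follow, and the proof is essentially a citation to~\cite[Proposition 3.3]{SaSt11} with these details spelled out.
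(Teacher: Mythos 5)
Your overall strategy is the right one: translate between $\Ext$-orthogonality in $\C$ and $\Hom{\T}--$-orthogonality in $\T=\underline\C$ using the standard isomorphism $\Ext1\C XY \cong \T(X,\Sigma Y)$, and translate between conflations and triangles while absorbing projective-injective summands. The correspondence between cotorsion pairs and Hom-orthogonal pairs, the forward direction of the completeness equivalence, and the passage from an approximation triangle to the conflation $0\to B_X\to A_X\to X\to 0$ are all handled correctly. One superficial but repeated slip: you write ``$\Ext1\C XY \cong \underline\C(X,\Sigma\inv Y)\cong\T(\Sigma X,Y)$'' at the start and ``$\Ext1\C XY\cong\T(\Sigma X,Y)$'' at the end, which would force the correspondence $(\A,\B)\mapsto(\A,\Sigma\inv\B)$ rather than $(\A,\B)\mapsto(\A,\Sigma\B)$; the correct formula, which you also state in the middle and which is what you actually use, is $\Ext1\C XY\cong\T(X,\Sigma Y)$. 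Fix the two misstatements so the computation is internally consistent.

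The genuine gap is in the sentence claiming that ``the second (dually constructed) approximation sequence likewise corresponds to the rotation of the same triangle in the other direction, so no extra work is needed.'' This is false as stated. Rotating the approximation triangle $A\to X\to\Sigma B\to\Sigma A$ (with $A\in\A$, $B\in\B$) the other way gives $X\to\Sigma B\to\Sigma A\to\Sigma X$, whose corresponding conflation is $0\to X\to(\Sigma B)'\to(\Sigma A)'\to 0$; but the middle term lies in $\Sigma\B$ and the last in $\Sigma\A$, neither of which is $\B$ or $\A$ in general, since a cotorsion pair is not closed under suspension. The second conflation $0\to X\to B^X\to A^X\to 0$ with $B^X\in\B$, $A^X\in\A$ requires additional input: either apply completeness of $(\A,\Sigma\B)$ to the object $\Sigma X$ to obtain a triangle $A_1\to\Sigma X\to\Sigma B_1\to\Sigma A_1$, desuspend and rotate to get $X\to B_1\to A_1\to\Sigma X$ with $B_1\in\B$, $A_1\in\A$, and then realize this by a conflation as before; or invoke the Salce lemma in the Frobenius category $\C$, which has enough projectives and injectives, to deduce the existence of one type of approximation sequence from the other. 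Either route closes the gap, but some argument beyond rotating the single triangle for $X$ is required.
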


However, the following result from~\cite{AI12}, which generalizes both \cite[Theorem III.2.3]{BR07} and \cite[Theorem 5]{Pauk11}, holds for arbitrary triangulated categories with coproducts.

\begin{proposition} \label{prop:compactly-generated-pairs} \cite[Theorem 4.3]{AI12}
Let $\T$ be triangulated category with small coproducts and let $\clS \subseteq \T$ be a set of compact objects in $\T$. Then the Hom-orthogonal pair $(\A,\B)$ generated by $\clS$ is complete.
\end{proposition}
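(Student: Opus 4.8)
The plan is to build the approximation triangle for an arbitrary $X \in \T$ by a transfinite process that kills all maps from $\clS$ in the cone. First I would record the two orthogonality observations: the class $\B = \clS^\perp$ is automatically closed under coproducts (because every $S \in \clS$ is compact), and it is closed under suspensions, desuspensions and extensions only to the extent the construction forces — here we only need coproducts. Dually, the class $\A = {^\perp\B}$ contains $\clS$ and is closed under coproducts, suspensions, desuspensions, extensions and direct summands, all of which follow formally from being a left Hom-orthogonal to a fixed class. These closure properties of $\A$ will be what guarantees the ``$A$'' term of the triangle lands in $\A$.

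Next I would run the familiar Bousfield-style small-object construction, but with all (de)suspensions thrown in. Fix $X$ and set $X_0 = X$. Given $X_\alpha$, consider the set $M_\alpha$ of all morphisms $\Sigma^n S \to X_\alpha$ with $S \in \clS$ and $n \in \Z$ — or, what amounts to the same, all morphisms from objects of the closure of $\clS$ under suspensions and desuspensions — and form the triangle
\[
\coprod_{f \in M_\alpha} (\text{source of } f) \overset{(f)}{\la} X_\alpha \la X_{\alpha+1} \la \Sigma\!\!\coprod_{f \in M_\alpha}(\text{source of }f).
\]
At limit ordinals take $X_\mu = \hocolim_{\alpha<\mu} X_\alpha$. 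By a standard compactness/cardinality argument (the sources of the $f$'s are compact, hence see only set-many ordinals worth of the tower, and a regular cardinal $\lambda$ large enough relative to $X$ and $\clS$ suffices), the map $X \to X_\lambda$ eventually becomes a left $\B$-approximation: any $\Sigma^n S \to X_\lambda$ factors through some $X_\alpha$ by compactness, and it was killed at stage $\alpha$, so $\T(\Sigma^n S, X_\lambda) = 0$ for all $S \in \clS$ and all $n$, i.e.\ $X_\lambda \in \clS^\perp = \B$ (the vanishing for all $n$ is exactly what makes $X_\lambda$ lie in $\B$, not merely right-orthogonal in one degree). Completing $X \to X_\lambda$ to a triangle $A \to X \to X_\lambda \to \Sigma A$, it remains to check $A \in \A$.

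For the last point I would identify $A$, up to the octahedral axiom, with a homotopy colimit of the partial cones $C_\alpha := \mathrm{cone}(X \to X_\alpha)[-1]$, and observe that each $C_\alpha$ is built from the coproducts $\coprod \Sigma^n S$ by finitely many extensions and by one homotopy colimit at each limit stage. Every $\Sigma^n S$ lies in $\A$ since $\clS \subseteq \A = {^\perp\B}$ and $\A$ is closed under (de)suspensions; $\A$ is closed under coproducts and extensions; and $\A$ is closed under the homotopy colimits occurring here because $\B$ is closed under coproducts (so that for $B \in \B$ one has $\T(\hocolim A_\alpha, B) = 0$ from the defining triangle of $\hocolim$ together with the Milnor $\lim^1$ sequence, using $\T(\Sigma\coprod A_\alpha, B) \to \T(\coprod A_\alpha,B)$). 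Hence $A \in \A$, and $A \to X \to X_\lambda \to \Sigma A$ is the desired approximation triangle. The main obstacle — and the only genuinely delicate point — is the bookkeeping in the transfinite induction: making precise the cardinal $\lambda$ via the compactness of the objects of $\clS$ so that the construction stabilizes, and checking that the homotopy colimit identifications of the $C_\alpha$'s are compatible enough that $A$ really is obtained by the permitted closure operations on $\A$; everything else is the standard formal yoga with Hom-orthogonality and triangulated colimits.
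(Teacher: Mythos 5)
The proposal has a genuine gap, and it is located precisely where the statement diverges from the familiar Bousfield localization setting. You assert that $\A = {}^{\perp}\B$ is closed under suspensions and desuspensions ``formally from being a left Hom-orthogonal to a fixed class.'' This is false. From $\T(A,B)=0$ for all $B\in\B$ one gets $\T(\Sigma A,B)=\T(A,\Sigma^{-1}B)$, which vanishes only if $\Sigma^{-1}B\in\B$; likewise $\T(\Sigma^{-1}A,B)=\T(A,\Sigma B)$ requires $\Sigma\B\subseteq\B$. For a general Hom-orthogonal pair --- and this proposition is exactly the case with no closure assumption on $\A$ or $\B$ under (de)suspension, which is what distinguishes it from a Bousfield localization --- neither inclusion holds. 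Since your construction attaches cones on $\coprod\Sigma^n S$ for \emph{all} $n\in\Z$, the fiber $A$ of $X\to X_\lambda$ is built from objects $\Sigma^n S$ that need not lie in $\A$, and there is no reason for $A$ to land in $\A$. (What you would obtain is the localizing subcategory generated by $\clS$, which is in general strictly larger than $\A$.) The stage at which $X_\lambda$ enters $\B$ is not the problem; the failure is that the resulting triangle is not an approximation triangle for $(\A,\B)$.

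The fix, and what the cited proof in~\cite[Theorem 4.3]{AI12} actually does (compare the proof of Theorem~\ref{thm:description_of_compacts}(i) in this paper), is to kill maps \emph{only in degree zero}: at each step take a right $\Add\clS$-approximation $g_i\dd S_i\to X_i$ with $S_i\in\Add\clS$ (exists because $\clS$ is a set, e.g.\ $S_i=\coprod_{S\in\clS}\coprod_{f\in\T(S,X_i)}S$) and complete to a triangle $S_i\to X_i\to X_{i+1}\to\Sigma S_i$. Then each map $\T(S,X_i)\to\T(S,X_{i+1})$ is zero for $S\in\clS$, so after $\omega$ steps $\T(S,\hocolim X_i)\cong\varinjlim\T(S,X_i)=0$ by compactness, giving $\hocolim X_i\in\B$; no transfinite process and no small-object cardinality bookkeeping is needed. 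On the other side, the fiber $A$ of $X\to\hocolim X_i$ is the homotopy colimit of a chain $0=Y_0\to Y_1\to\cdots$ whose successive cones are the $S_i\in\Add\clS\subseteq\A$, and one concludes $A\in{}^\perp\B=\A$ by the Milnor $\varprojlim^1$-argument exactly as in Lemma~\ref{lem:eklof} --- which uses only closure of $\A$ under extensions and coproducts, not under (de)suspension. A secondary, though less central, problem with your version: transfinite homotopy colimits at limit ordinals and the corresponding compactness isomorphism are not available in a bare triangulated category with coproducts; this is another reason the standard argument stays at $\omega$.
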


The latter proposition provides us with a rich source of complete Hom-orthogonal pairs in triangulated categories. In order to obtain a classification, however, we need more. Starting with $\clS \subseteq \T^c$ as in Proposition~\ref{prop:compactly-generated-pairs}, we wish to know which other compact objects of $\T$ possibly belong to $\A$. In order to obtain an answer we will need to throw in more assumptions. In order to facilitate our analysis, we introduce some notations first.

\begin{notation} \label{not:ext-product}
Let $\T$ be triangulated and $\U$, $\V$ be two classes of objects. Then $\U \star \V$ stands for the class of all objects $X \in \T$ which admit a triangle of the form
\[ U \la X \la V \la \Sigma U \]
with $U \in \U$ and $V \in \V$. Similarly, we define for each $i \ge 1$
\[ \U^{\star i} = \underbrace{\U \star \cdots \star \U}_{i\ \mathrm{times}}. \]
Finally, we will use the following notation:
\begin{ii}
\item $\ext\U$ stands for the closure of $\U$ under extensions and summands;
\item $\add\U$ stands for the closure of $\U$ under finite coproducts and summands;
\item $\Add\U$ stands for the closure of $\U$ under all (possibly infinite) coproducts and under summands.
\end{ii}
\end{notation}

Note that notation above makes sense since it is a well-known consequence of the octahedral axiom that the operation $\star$ is associative. That is, $(\U \star \V) \star \W = \U \star (\V \star \W)$ for every triple $\U,\V,\W \subseteq \T$. Moreover, it is an easy observation that $X \in \ext\U$ \iff $X$ is a summand of an object in $\U^{\star i}$ for some $i \ge 1$.

Now we are able to obtain a first version of a general ``classification'' of compactly generated Hom-orthogonal pairs, which is inspired by and generalizes~\cite[Theorem A.7]{KeNi11} on compactly generated $t$-structures. It can also be viewed (under suitable assumptions) as a generalization of results on compactly generated Bousfield localizations~\cite{Nee92-2}. As in~\cite{KeNi11}, we need to assume that the triangulated category in question is at the base of a stable derivator (see~\S\ref{subsec:derivators}).

\begin{theorem} \label{thm:description_of_compacts}
Let $\T = \bbD(e)$, where $\bbD$ is a stable derivator \st $\bbD(I)$ has small coproducts for each small category $I$. Suppose that $\clS \subseteq \T$ is a set of compact objects in $\T$ and $(\A,\B)$ is the Hom-orthogonal pair in $\T$ generated by $\clS$. Then:
\begin{ii}
\item An object $X \in \T$ belongs to $\A$ \iff $X$ is a summand of a homotopy colimit of a sequence
\[ 0 = Y_0 \overset{f_0}\la Y_1 \overset{f_1}\la Y_2 \la \cdots, \]
where each $f_i$ occurs in a triangle $Y_i \overset{f_i}\to Y_{i+1} \to S_i \to \Sigma Y_i$ with $S_i \in \Add\clS$.
\item $\A \cap \T^c = \ext\clS$.
\end{ii}
\end{theorem}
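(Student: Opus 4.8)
The plan is to prove part (i) first and then deduce part (ii) from it, since (ii) is essentially the ``finitely generated'' shadow of the transfinite/countable description in (i).

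\textbf{Proof of (i).} For the ``if'' direction, I would first check that each $Y_i$ built as above lies in $\A$. This is an induction on $i$: we have $Y_0 = 0 \in \A$, and if $Y_i \in \A$ then the triangle $Y_i \to Y_{i+1} \to S_i \to \Sigma Y_i$ with $S_i \in \Add\clS$ exhibits $Y_{i+1}$ as an extension of $S_i$ by $Y_i$; since $\A = {}^\perp(\clS^\perp)$ is closed under extensions (the class ${}^\perp\B$ is always closed under extensions in a triangulated category) and contains $\Add\clS$ (as $\B = \clS^\perp$ is closed under coproducts and $\A$ is closed under coproducts and summands), we get $Y_{i+1} \in \A$. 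Next, to see that $\hocolim Y_i \in \A$, I would use the defining triangle $\coprod Y_i \xrightarrow{1 - \coprod f_i} \coprod Y_i \to \hocolim Y_i \to \Sigma \coprod Y_i$: since $\A$ is closed under coproducts, $\coprod Y_i \in \A$, and since $\A$ is also closed under the cone of a map between its objects — which again follows from $\A = {}^\perp\B$ being closed under extensions together with closure under $\Sigma$? No: $\A$ need not be closed under $\Sigma$ here. Instead I would argue directly that $\T(\hocolim Y_i, B) = 0$ for all $B \in \B$: applying $\T(-,B)$ to the defining triangle and using that the $Y_i$ are... this needs compactness of the $Y_i$, which we don't have. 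The cleaner route, which I expect to be the intended one: apply Proposition~\ref{prop:deriv_htp_colim}. Given $B \in \B$ and a map $g\dd \hocolim Y_i \to B$, I would realize it via the derivator as in the proof of that proposition and show it is zero by testing against each $Y_i$; since each composite $Y_i \to \hocolim Y_i \to B$ factors through $S_i$-type pieces and lands in $\A$, it is zero, and a Milnor colimit argument (the $\varprojlim^1$ vanishing coming from the structure of the chain) forces $g = 0$. Closure of $\A$ under summands then finishes the ``if'' direction.

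For the ``only if'' direction, take $X \in \A$. The idea is to build the approximating chain by a transfinite small-object-type construction truncated to length $\omega$. Starting with $Y_0 = 0$ and the zero map to $X$, at each stage I would form $Y_{i+1}$ by adding a copy of every object of $\clS$ mapping to the cone of $Y_i \to X$, i.e. take $S_i = \coprod_{\clS \ni S \to C_i} S \in \Add\clS$ where $C_i = \mathrm{cone}(Y_i \to X)$, and let $Y_{i+1}$ be the induced extension so that the map $Y_{i+1} \to X$ kills these; compactness of the objects of $\clS$ guarantees that in the homotopy colimit, $\T(S, \mathrm{cone}(\hocolim Y_i \to X)) = 0$ for all $S \in \clS$, i.e. $\mathrm{cone}(\hocolim Y_i \to X) \in \clS^\perp = \B$. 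Thus we obtain a triangle $\hocolim Y_i \to X \to B \to$ with $B \in \B$ and (by the ``if'' direction just proved) $\hocolim Y_i \in \A$. But now $X \in \A$ and $\T(\A,\B) = 0$ forces the map $X \to B$ to be zero, so the triangle splits and $X$ is a summand of $\hocolim Y_i$. The main obstacle here is making the small-object argument fit the length-$\omega$ framework and correctly identifying the maps so that the telescope really does kill all morphisms from $\clS$ into the cone — this is exactly where Proposition~\ref{prop:deriv_htp_colim} and the compactness of $\clS$ are essential, and the derivator is needed to produce the chain of triangles coherently rather than just up to non-canonical choices.

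\textbf{Proof of (ii).} The inclusion $\ext\clS \subseteq \A \cap \T^c$ is immediate: $\clS \subseteq \A$ by construction and $\A$ is closed under extensions and summands (shown above), while $\clS \subseteq \T^c$ and $\T^c$ is closed under extensions and summands, so $\ext\clS \subseteq \T^c$. For the reverse inclusion, take a compact $X \in \A$. By (i), $X$ is a summand of $\hocolim Y_i$ for a chain as described. Since $X$ is compact, the identity map $X \to \hocolim Y_i$ (splitting the summand inclusion) factors through some $Y_n$, so $X$ is a summand of $Y_n$. Now I would show by induction that each $Y_n$ lies in $\ext\clS$ up to the size of the $S_i$: the issue is that $S_i \in \Add\clS$ may be an infinite coproduct. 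Here compactness of $X$ saves us again — since $X$ is a summand of $Y_n$ and $X$ is compact, one can replace each $S_i$ (for $i < n$) by a finite subcoproduct, i.e. by an object of $\add\clS$, without changing the relevant summand; more precisely, the retraction $Y_n \to X$ and the compactness of $X$ let one factor through a sub-chain built from $\add\clS$-pieces. Then $Y_n$ (or rather the relevant summand, namely $X$) lies in $\ext\clS$ because it is a summand of an iterated extension $(\add\clS)^{\star n} \subseteq (\ext\clS)^{\star n}$, and by the remark preceding the theorem $X \in \ext\clS$. The delicate point is the passage from $\Add\clS$ to $\add\clS$; I expect this to require a careful but standard compactness argument (factor the relevant map through a finitely generated subobject at each of the finitely many stages), and it is the one genuinely technical step in deducing (ii) from (i).
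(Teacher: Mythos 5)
Your plan matches the paper's proof at every structural step, so let me just flag two places where the sketch should be sharpened. First, in the ``if'' direction of (i) the derivator is not actually needed: the paper isolates the relevant $\varprojlim^1$ argument as Lemma~\ref{lem:eklof}, a purely triangulated statement using the Milnor exact sequence from~\cite{Bel00} for the telescope together with the Mittag--Leffler criterion; the surjectivity of $\T(\Sigma Y_{i+1},Z) \to \T(\Sigma Y_i,Z)$ comes exactly from the hypothesis that the cones $S_i$ lie in ${}^\perp Z$, which is what makes $\varprojlim^1$ vanish. Proposition~\ref{prop:deriv_htp_colim} (and hence the derivator) enters only in the ``only if'' direction, precisely where you invoke it, to organize the small-object tower and the compatible maps to $X$ into a coherent diagram of triangles and to produce the triangle $\hocolim Y_i \to X \to \hocolim X_i \to \Sigma\hocolim Y_i$.

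Second, the step you call ``one genuinely technical step'' in (ii) — upgrading ``$X$ is a compact summand of $Y_n \in (\Add\clS)^{\star n}$'' to ``$X \in \ext\clS$'' — cannot be done by naively replacing each $S_i$ with a finite subcoproduct ``without changing the relevant summand''; the infinite coproducts really do have to be stripped away one layer at a time. The paper descends through the tower: starting from a split mono $s\dd X \to C_i$ with $C_i \in (\Add\clS)^{\star i}$ and a triangle $C_{i-1} \to C_i \to S_i' \to \Sigma C_{i-1}$ with $S_i' \in \Add\clS$, compactness of $X$ factors $X \to C_i \to S_i'$ through some $S_i'' \in \add\clS$, yielding a compact fiber $Z_{i-1}$ mapping to $C_{i-1}$; iterating gives a finite chain $Z_0 \to \cdots \to Z_{i-1} \to X$ with all cones in $\add\clS$. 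The payoff is that $Z_0 \to X$ composes with $s$ to give a map $Z_0 \to C_i$ that factors through $0$, hence $Z_0 \to X$ itself is zero since $s$ is split mono, and so the triangle $\Sigma\inv C' \to Z_0 \to X \to C'$ (with $C' \in (\add\clS)^{\star i}$ by the octahedral axiom) has last map a split monomorphism. That is the missing mechanism; once it is filled in, your argument is the paper's.
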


As an immediate corollary we get:

\begin{corollary} \label{cor:description_of_compacts}
Let $\T = \bbD(e)$ as in Theorem~\ref{thm:description_of_compacts}. Then there is a bijective correspondence between
\begin{ii}
\item compactly generated Hom-orthogonal pairs $(\A,\B)$ in $\T$ and
\item full subcategories of $\clS \subseteq \T^c$ closed under extensions and summands.
\end{ii}
The correspondence is given by the mutually inverse assignments $(\A,\B) \mapsto \A \cap \T^c$ and $\clS \mapsto ({^\perp (\clS^\perp)}, \clS^\perp)$.
\end{corollary}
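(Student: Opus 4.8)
The plan is to reduce the whole statement to Theorem~\ref{thm:description_of_compacts}(ii) together with Proposition~\ref{prop:compactly-generated-pairs}; once these are in hand, only a formal manipulation of orthogonal classes remains.

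First I would check that both assignments are well defined. If $(\A,\B)$ is a compactly generated Hom-orthogonal pair, say generated by a set $\clS_0$ of compact objects (so $\clS_0 \subseteq \A$ and $\B = \clS_0^{\perp}$), then $\A \cap \T^c = \ext\clS_0$ by Theorem~\ref{thm:description_of_compacts}(ii); since $\ext\clS_0$ is by construction closed under extensions and summands, the assignment $(\A,\B) \mapsto \A \cap \T^c$ really does produce a full subcategory of $\T^c$ of the required type. It is moreover skeletally small, being obtained from a set by iterated extensions and passage to direct summands, so it may be regarded as a set of objects (up to isomorphism). Conversely, if $\clS \subseteq \T^c$ is a full subcategory closed under extensions and summands, then in particular $\clS$ is a set of compact objects, so Proposition~\ref{prop:compactly-generated-pairs} shows that the Hom-orthogonal pair generated by $\clS$, namely $({}^{\perp}(\clS^{\perp}),\clS^{\perp})$, is complete; hence $\clS \mapsto ({}^{\perp}(\clS^{\perp}),\clS^{\perp})$ indeed lands among compactly generated Hom-orthogonal pairs.

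Next I would verify that the two composites are identities. Starting from $\clS$ as in~(ii): the pair $({}^{\perp}(\clS^{\perp}),\clS^{\perp})$ is generated by $\clS$, so Theorem~\ref{thm:description_of_compacts}(ii) gives ${}^{\perp}(\clS^{\perp}) \cap \T^c = \ext\clS$, and $\ext\clS = \clS$ because $\clS$ is already closed under extensions and summands; thus $\clS$ is recovered. Starting from a compactly generated Hom-orthogonal pair $(\A,\B)$ with generating set $\clS_0 \subseteq \A$ of compact objects, put $\clS = \A \cap \T^c$; then $\clS_0 \subseteq \clS \subseteq \A$, and applying $(-)^{\perp}$, which reverses inclusions, yields
\[ \B = \A^{\perp} \subseteq \clS^{\perp} \subseteq \clS_0^{\perp} = \B, \]
so $\clS^{\perp} = \B$ and hence ${}^{\perp}(\clS^{\perp}) = {}^{\perp}\B = \A$, the last equality because $(\A,\B)$ is a Hom-orthogonal pair. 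Therefore $\clS = \A \cap \T^c$ is carried back to $({}^{\perp}\B,\B) = (\A,\B)$, which finishes the proof.

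The substance of the argument is entirely carried by Theorem~\ref{thm:description_of_compacts}(ii), so there is no genuine obstacle here. The only point I expect to deserve a word of care is the set-theoretic one: to apply Proposition~\ref{prop:compactly-generated-pairs} one needs $\A \cap \T^c = \ext\clS_0$ to be (equivalent to) a set, and this is immediate since a set of objects in a triangulated category generates only a set of isomorphism classes under extensions and passage to summands.
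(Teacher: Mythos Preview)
Your argument is correct and is exactly the kind of unpacking the paper has in mind; the paper itself gives no proof for this corollary beyond calling it immediate from Theorem~\ref{thm:description_of_compacts}. Two minor remarks: the appeal to Proposition~\ref{prop:compactly-generated-pairs} is superfluous here, since the corollary only asks that $({}^{\perp}(\clS^{\perp}),\clS^{\perp})$ be a compactly generated Hom-orthogonal pair, which it is tautologically once $\clS$ consists of compact objects (completeness is not part of the statement, though of course it follows); and your claim that an arbitrary $\clS$ as in~(ii) ``is a set'' is not literally justified by the hypotheses, but since the image of the first assignment consists only of essentially small subcategories (as you note via $\ext\clS_0$), the bijection forces this anyway, and the paper is equally informal on this point.
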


Before proving the theorem, we need a preparatory lemma. It can be viewed as a triangulated version of the Eklof Lemma (see~\cite[Lemma 3.1.2]{GT}).

\begin{lemma} \label{lem:eklof}
Let $\T$ be a triangulated category and $Z \in \T$. Suppose that we are given a countable direct system
\[ 0 = Y_0 \overset{f_0}\la Y_1 \overset{f_1}\la Y_2 \la \cdots, \]
in $\T$ where each $f_i$ belongs to a triangle $Y_i \overset{f_i}\to Y_{i+1} \to S_i \to \Sigma Y_i$ with $S_i \in {^\perp Z}$. Then $\hocolim Y_i \in {^\perp Z}$.
\end{lemma}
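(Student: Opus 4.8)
The statement to prove is Lemma~\ref{lem:eklof}: if $0 = Y_0 \overset{f_0}\la Y_1 \overset{f_1}\la Y_2 \la \cdots$ is a sequence in which each cofibre $S_i$ lies in ${^\perp Z}$, then $\hocolim Y_i \in {^\perp Z}$. The plan is to apply the functor $\T(-,Z)$ to the defining triangle of the homotopy colimit and show that it forces $\T(\hocolim Y_i, Z) = 0$. Concretely, recall that $\hocolim Y_i$ sits in a triangle
\[ \coprod_{i\ge 0} Y_i \overset{1 - \coprod f_i}\la \coprod_{i\ge 0} Y_i \la \hocolim Y_i \la \Sigma\coprod_{i\ge 0} Y_i. \]
Applying the cohomological functor $\T(-,Z)$ and using that $Z$ need not be compact but the map is into $Z$ (so no compactness issue arises — $\T(-,Z)$ takes coproducts to products), we obtain an exact sequence
\[ \T(\Sigma\coprod Y_i, Z) \la \T(\hocolim Y_i, Z) \la \prod_i \T(Y_i,Z) \overset{1 - \prod \T(f_i,Z)}\la \prod_i \T(Y_i, Z). \]
Thus it suffices to show two things: first, that the map $1 - \prod\T(f_i,Z)\dd \prod_i \T(Y_i,Z) \to \prod_i \T(Y_i,Z)$ is injective, which kills the image of $\T(\hocolim Y_i,Z)$ into $\prod_i \T(Y_i,Z)$; and second, that the connecting contribution from $\T(\Sigma\coprod Y_i,Z)$ is absorbed — more precisely, that the map $\T(\hocolim Y_i,Z)\to\prod_i\T(Y_i,Z)$ is injective, which follows once we know $1-\prod\T(f_i,Z)$ is surjective (equivalently, that the $\varprojlim^1$ term vanishes). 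Both reduce to a purely diagram-chase statement about the inverse system $\big(\T(Y_i,Z), \T(f_i,Z)\big)$ of abelian groups.

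The key input making this work is the hypothesis $S_i \in {^\perp Z}$. Applying $\T(-,Z)$ to the triangle $Y_i \overset{f_i}\to Y_{i+1}\to S_i\to\Sigma Y_i$ and using $\T(S_i,Z) = \T(\Sigma S_i, Z) = 0$ (the latter because $\Sigma S_i$ has $Z$ on the right and $\Sigma$ is an autoequivalence; note ${^\perp Z}$ in this lemma is \emph{not} assumed closed under suspension, but we only need $\T(\Sigma S_i,Z)=\T(S_i,\Sigma^{-1}Z)$... — actually here is the subtle point: we do \emph{not} have $\T(\Sigma S_i, Z) = 0$ for free). So I would instead argue as follows: from the triangle, $\T(f_i, Z)\dd \T(Y_{i+1},Z)\to\T(Y_i,Z)$ has kernel a quotient of $\T(\Sigma S_i, Z)$ — no. Let me restate it cleanly: the long exact sequence reads
\[ \cdots \la \T(S_i, Z) \la \T(Y_{i+1},Z) \overset{\T(f_i,Z)}\la \T(Y_i,Z) \la \T(\Sigma^{-1} S_i, Z) \la \cdots \]
and $\T(S_i,Z) = 0$ gives that each $\T(f_i,Z)$ is injective. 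This is the crucial consequence of the hypothesis: the inverse system $\big(\T(Y_i,Z),\T(f_i,Z)\big)$ has all transition maps injective. For such an inverse system (indexed by $\omega$) the map $1 - \text{shift}$ on the product is automatically injective and surjective: injectivity because an element in its kernel would be a compatible family, and starting from $\T(Y_0,Z) = \T(0,Z) = 0$ and pushing forward along the injective maps forces every coordinate to be $0$; surjectivity (the $\varprojlim^1 = 0$ statement) by the standard telescoping argument, which is elementary for arbitrary inverse systems of abelian groups indexed by $\omega$. Hence $\T(\hocolim Y_i, Z) = 0$, as desired.

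Let me reorganise into the order I would write it. First I would note $\T(-,Z)$ is cohomological and sends the coproduct $\coprod Y_i$ to the product $\prod\T(Y_i,Z)$ (no compactness needed; this holds for any object on the first variable when mapping into a fixed object). Second, I would apply $\T(-,Z)$ to the homotopy colimit triangle to get the four-term exact sequence displayed above, identifying the middle map with $1 - \prod\T(f_i,Z)$ up to sign conventions. Third — the heart of the argument — I would apply $\T(-,Z)$ to each triangle $Y_i\to Y_{i+1}\to S_i\to\Sigma Y_i$ and use $S_i\in{^\perp Z}$ to conclude each $\T(f_i,Z)$ is a monomorphism. Fourth, starting from $\T(Y_0,Z) = 0$, induct to show each $\T(Y_i,Z)$ injects into $\T(Y_{i+1},Z)$ compatibly; deduce that $1 - \prod\T(f_i,Z)$ is injective (kernel elements are compatible families starting at $0$) and surjective (standard $\varprojlim^1$-vanishing telescoping, valid for $\omega$-indexed systems). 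Fifth, conclude from the exact sequence that $\T(\hocolim Y_i, Z) = 0$, i.e.\ $\hocolim Y_i\in{^\perp Z}$.

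\textbf{Main obstacle.} There is no deep obstacle here — this is the triangulated analogue of the Eklof Lemma and the argument is essentially a $\varprojlim/\varprojlim^1$ bookkeeping exercise. The one point requiring mild care is the surjectivity of $1 - \prod\T(f_i,Z)$ (the vanishing of $\varprojlim^1$), which one must verify by hand rather than invoke Mittag-Leffler, since the transition maps are injective but the system need not be Mittag-Leffler in the strong sense needed — actually injectivity of all transition maps is \emph{more} than enough, so even this is routine. A second, purely cosmetic, point is matching the signs in identifying the middle map of the exact sequence with $1 - \prod\T(f_i,Z)$ and keeping the index conventions ($f_i\dd Y_i\to Y_{i+1}$ versus $\T(f_i,Z)\dd\T(Y_{i+1},Z)\to\T(Y_i,Z)$) straight; neither affects the conclusion.
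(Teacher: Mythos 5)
Your overall plan is the right one and matches the paper's approach: apply $\T(-,Z)$ to the homotopy colimit triangle, split the analysis into a $\varprojlim$ term and a $\varprojlim^1$ term, and use the hypothesis $S_i \in {^\perp Z}$ to kill both. But the execution of the $\varprojlim^1$ part has a genuine gap, in fact two intertwined errors.

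First, you misidentify which inverse system the problematic term involves. Writing out the long exact sequence obtained from applying $\T(-,Z)$ to the defining triangle gives
\[ \prod_i \T(\Sigma Y_i, Z) \overset{\phi}\la \prod_i \T(\Sigma Y_i, Z) \la \T(\hocolim Y_i, Z) \la \prod_i \T(Y_i,Z) \overset{\psi}\la \prod_i \T(Y_i, Z), \]
so the injectivity of $\T(\hocolim Y_i,Z)\to\prod_i\T(Y_i,Z)$ is controlled by the surjectivity of $\phi$, i.e.\ by $\varprojlim^1 \T(\Sigma Y_i,Z) = 0$ --- the \emph{suspended} system, not by the surjectivity of $\psi$ on $\prod_i\T(Y_i,Z)$ as you claim. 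Since ${^\perp Z}$ is not closed under suspension in this lemma, you cannot conclude $\T(\Sigma Y_i,Z)=0$ the way you can for $\T(Y_i,Z)$ (indeed the paper deduces $\T(Y_i,Z)=0$ for all $i$ by a straightforward induction, which is cleaner than your injective-transition-map argument, though both work for the $\varprojlim$ part).

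Second, your justification for the surjectivity of $1-\mathrm{shift}$ is wrong. It is not true that $\varprojlim^1=0$ for arbitrary $\omega$-indexed inverse systems of abelian groups, nor does injectivity of all transition maps suffice: the system $\Z \supset 2\Z \supset 4\Z \supset\cdots$ has injective (even isomorphic-onto-image) transition maps and nonvanishing $\varprojlim^1$. What is true, and what the paper uses, is that \emph{surjective} transition maps give the Mittag-Leffler condition and hence $\varprojlim^1 = 0$. Fortunately the same hypothesis you already exploit delivers exactly this: from the triangle $Y_i \to Y_{i+1}\to S_i\to\Sigma Y_i$ and $\T(S_i,Z)=0$ one reads off that $\T(\Sigma f_i,Z)\dd \T(\Sigma Y_{i+1},Z)\to\T(\Sigma Y_i,Z)$ is \emph{surjective}. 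Replacing your injectivity claim by this surjectivity, applied to the correct (suspended) system, repairs the argument and brings it into agreement with the paper's proof.
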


\begin{proof}
Lemma~5.8(2) in \cite{Bel00} used for $F = \T(-,Z)$ provides us with a short exact sequence of abelian groups
\[
0 \la {\varprojlim}^1 \T(\Sigma Y_i,Z) \la \T(\hocolim Y_i,Z) \la \varprojlim \T(Y_i,Z) \la 0,
\]
where $\varprojlim^1$ is the first derived functor of the inverse limit (see~\cite{Jensen72}). Invoking the assumption, one easily proves by induction on $i$ that $Y_i \in {^\perp Z}$ for all $i \ge 0$. In particular $\varprojlim \T(Y_i,Z) = 0$. Regarding the $\varprojlim^1$-term, the assumption on the mapping cones of the $f_i$ implies that
\[ \Hom{\T}{\Sigma f_i}{Z}\dd \Hom{\T}{\Sigma Y_{i+1}}{Z} \la \Hom{\T}{\Sigma Y_i}{Z} \]
is surjective for each $i \ge 0$. Hence the corresponding inverse system of abelian groups satisfies the Mittag-Leffler condition and ${\varprojlim}^1 \T(\Sigma Y_i,Z)$ vanishes by~\cite[Preliminaries, Proposition 13.2.2]{EGAIII-partie1}. The exact sequence above then implies the equality $\T(\hocolim Y_i, Z) = 0$, as required.
\end{proof}

\begin{proof}[Proof of Theorem~\ref{thm:description_of_compacts}]
(i) If $X$ is a summand in $\hocolim Y_i$ as in the statement of the theorem, then $X \in \A$ by Lemma~\ref{lem:eklof}. It remains to prove the converse, that is, every $X \in \A$ must be of this form.

To this end, we start as in the proof of~\cite[Theorem 4.3]{AI12}. We put $X_0 = X$ and inductively construct triangles
\[ S_i \overset{g_i}\la X_i \overset{h_i}\la X_{i+1} \la \Sigma S_i \]
so that the maps $g_i$ are right $\Add\clS$-approximations. If we denote by $u_i$ the composition $h_{i-1} \cdots h_1 h_0 \dd X \to X_i$, Proposition~\ref{prop:deriv_htp_colim} allows us to construct a diagram
\begin{equation} \label{eqn:approx}
\begin{CD}
0 @>>> \Sigma Y_1 @>{\Sigma f_1}>> \Sigma Y_2 @>{\Sigma f_2}>> \Sigma Y_3 @>{\Sigma f_3}>> \cdots \\
@AAA @AAA @AAA @AAA \\
X_0 @>{h_0}>> X_1 @>{h_1}>> X_2 @>{h_2}>> X_3 @>{h_3}>> \cdots \\
@A{1}AA @A{u_1}AA @A{u_2}AA @A{u_3}AA \\
X @= X @= X @= X @= \cdots \\
@A{0}AA @A{w_1}AA @A{w_2}AA @A{w_3}AA \\
0 @>>> Y_1 @>{f_1}>> Y_2 @>{f_2}>> Y_3 @>{f_3}>> \cdots \\
\end{CD}
\end{equation}
\st all columns are triangles, all upper squares are homotopy cartesian and there is a triangle of the form
\[
\begin{CD}
\hocolim Y_i @>{\hocolim w_i}>> X @>{\hocolim u_i}>> \hocolim X_i @>>> \Sigma \hocolim Y_i.
\end{CD}
\]

A version of the octahedral axiom~\cite[Lemma 1.4.4]{Nee01} yields triangles of the form
\[ \Sigma\inv S_i \la Y_i \overset{f_i}\la Y_{i+1} \la S_i, \]
so that the bottom row of diagram~(\ref{eqn:approx}) is a direct system as from the statement of our theorem. By the same argument as in the proof of~\cite[Proposition 4.5]{AI12} we get $\hocolim X_i \in \B$. Hence if $X \in \A$, then $\hocolim w_i$ is a split epimorphism and so $X$ has the required form.

(ii) The argument is a variation of the proof of~\cite[Lemma 2.3]{Nee92-2}, while the presentation is mostly taken from~\cite[\S5.3]{KellerDG} and~\cite[Proposition 2.2.4]{BvdB03}.

Clearly $\ext\clS \subseteq \A \cap \T^c$, so suppose conversely that $X \in \A \cap \T^c$. Consider again the morphism $X \to \hocolim X_i$ coming from diagram (\ref{eqn:approx}). Using the compactness of $X$ and the same argument as in~\cite[Lemma 2.2]{Nee92-2}, we infer that there exists $i < \omega$ so that
\[ u_i\dd X \la X_i \]
vanishes. From the construction of $u_i$ and the octahedral axiom, the object $C_i$ in the triangle
\[ C_i \overset{v}\la X \overset{u_i}\la X_i \la \Sigma C_i \] 
is easily seen to belong to $(\Add\clS)^{\star i}$. Since $u_i = 0$, the map $v$ is a split epimorphism and $X$ is a summand of $C_i \in (\Add\clS)^{\star i}$.

It suffices to prove that $X$ is also a summand of an object in $(\add\clS)^{\star i}$, as the latter is clearly a subclass of $\ext\clS$. To this end consider a split monomorphism $s\dd X \to C_i$. Since $C_i \in (\Add\clS)^{\star i}$, there exists a triangle
\[ C_{i-1} \la C_i \overset{q_i}\la S'_i \la \Sigma C_{i-1} \]
with $C_{i-1} \in (\Add\clS)^{\star(i-1)}$ and $S'_i \in \Add\clS$. As $X$ is compact, the composition $q_i s\dd X \to S'_i$ factors through an object $S''_i \in \add\clS$ and we obtain a morphism of triangles
\[
\begin{CD}
  Z_{i-1} @>>>  X  @>>>      S''_i @>>> \Sigma Z_{i-1}   \\
@V{s_{i-1}}VV @V{s}VV         @VVV             @VVV      \\
  C_{i-1} @>>> C_i @>{q_i}>> S'_i  @>>> \Sigma C_{i-1}   \\
\end{CD}
\]
Clearly both $S''_i$ and $Z_{i-1}$ are compact again. Repeating the process with the morphism $s_{i-1}\dd Z_{i-1} \to C_{i-1}$ and further inductively, we obtain a commutative diagram
\[
\begin{CD}
Z_0 @>>> Z_1 @>>> \cdots @>>> Z_{i-1} @>>> X      \\
@VVV  @V{s_1}VV     @.    @V{s_{n-1}}VV @V{s}VV   \\
0   @>>> C_1 @>>> \cdots @>>> C_{i-1} @>>> C_n    \\
\end{CD}
\]
with the cone of each of the upper maps belonging to $\add\clS$. If we consider the outer commutative square
\[
\begin{CD}
Z_0   @>{w}>>   X     \\
@VVV         @V{s}VV  \\
 0    @>>>     C_i,   \\
\end{CD}
\]
we get that $sw = 0$. But $s$ is a split monomorphism, so that $w = 0$. Hence the last map in the triangle
\[ \Sigma\inv C' \la Z_0 \overset{w}\la X \la C' \]
is a split monomorphism and by the construction and the octahedral axiom we have $C' \in (\add\clS)^{\star i}$.
\end{proof}

\subsection{Adjacent Hom-orthogonal pairs}
\label{subsec:adjacent}

An interesting phenomenon noticed by Bondarko~\cite{Bond10} and Pauksztello~\cite{Pauk08,Pauk11} is that $t$-structures and co-$t$-structures sometimes come in adjacent pairs, which can be viewed as an ``unstable'' version of recollements (or torsion torsion-free triples in the language of~\cite{BR07}). In fact, it is easy to define this notion even generally for Hom-orthogonal pairs:

\begin{definition} \label{def:adjacent}
Let $\T$ be a triangulated category and $(\X,\Y)$ and $(\A,\B)$ be two Hom-orthogonal pairs in $\T$. Then $(\X,\Y)$ is \emph{left adjacent} to $(\A,\B)$ and $(\A,\B)$ is \emph{right adjacent} to $(\X,\Y)$ if $\Y = \A$.
\end{definition}

We shall prove that for a compactly generated algebraic triangulated category a compactly generated Hom-orthogonal pair \emph{always} admits a complete right adjacent Hom-orthogonal pair. Thus our result addresses~\cite[Remark 4.5.3]{Bond10}; compare also to~\cite[Theorem 4.5.2]{Bond10}, \cite[Theorem 3.2]{Pauk08} and~\cite[Proposition 12]{Pauk11}. Another point of view at the result, in the algebraic case, can be as a generalization of the fact that a compactly generated triangulated localization induces a recollement; see~\cite[\S5.6]{Krause10}. The downside of the abstract approach here is that computing the (co)hearts as in~\cite{Bond10,Pauk08} if the Hom-orthogonal pairs correspond to general (co-)$t$-structures becomes rather difficult. We shall not address this problem here, however, but rather proceed to classification results in later sections.

\begin{theorem} \label{thm:compact_adjacent}
Let $\T$ be a compactly generated algebraic triangulated category and $(\X,\Y)$ be a Hom-orthogonal pair generated by a set of compact objects (hence complete by Proposition~\ref{prop:compactly-generated-pairs}). Then there exists a complete Hom-orthogonal pair $(\Y,\mathcal{Z})$.
\end{theorem}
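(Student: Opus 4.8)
The plan is to realise $\T$ via a concrete Frobenius model, where the problem turns into a statement about cotorsion pairs, and then to reduce everything to showing that $\Y$ is deconstructible.

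First I would fix a model: by Proposition~\ref{prop:alg_comp_gen} assume $\T = \DerA$ for a small dg category $\A$, and by Lemma~\ref{lem:CA_modules}, Proposition~\ref{prop:models_for_dg} and Remark~\ref{rem:cofibrant} identify $\T$ with the stable category $\underline\C$ of the Frobenius exact category $\C$ of cofibrant dg $\A$-modules; recall that $\C$ is deconstructible in the Grothendieck category $\G := \CpxA$, satisfies Hypothesis~\ref{hyp:exact}, and that $\C \hookrightarrow \G$ preserves $\lambda$-sequences of inflations and their compositions (Lemma~\ref{lem:deconstr_basic}). Let $\clS \subseteq \X$ be a set of compact objects with $\Y = \clS^\perp$. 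By Lemma~\ref{lem:cotorsion_vs_Hom}, producing a complete Hom-orthogonal pair $(\Y,\mathcal{Z})$ in $\T$ is the same as producing a complete cotorsion pair in $\C$ whose left-hand class is $\Y$. Now $\Y$ is automatically closed under direct summands, and it is a generating class in $\C$ since every projective object of $\C$ is a zero object of $\T=\underline\C$ and hence belongs to $\Y$; so by Proposition~\ref{prop:completeness} it would be enough to show that $\Y$ is deconstructible in $\C$, equivalently (Lemma~\ref{lem:deconstr_basic}(ii)) that $\Y$ is deconstructible in $\G$.

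To that end I would write $\Y$ as an intersection of deconstructible classes. Replace each object of $\clS$ by an isomorphic (in $\T$) cofibrant dg module that is a direct summand of a bounded complex of finitely generated projective graded $\A$-modules — possible because every compact object of $\DerA$ is such a summand — and call the resulting set $\clS'$. For $S \in \clS'$ and any $M \in \G$ one has $\T(S,M) = \mathrm{H}^0\DifA(S,M)$ because $S$ is cofibrant, and the Hom-complex functor $\DifA(S,-)\dd \G \to \Cpx{\Z}$ is exact and preserves coproducts and filtered colimits (it is a summand of a finite iterated cone of evaluation functors $M \mapsto M(A)$). Hence
\[
\Y \;=\; \C \,\cap\, \bigcap_{S \in \clS'}\, \DifA(S,-)^{-1}(\mathcal{N}),
\qquad
\mathcal{N} := \{\, N \in \Cpx{\Z} \mid \mathrm{H}^0(N) = 0 \,\}.
\]
Since $\C$ is deconstructible, by Proposition~\ref{prop:deconstr_intersect}(ii) it would remain to prove that $\mathcal{N}$ and each preimage $\DifA(S,-)^{-1}(\mathcal{N})$ is deconstructible.

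This last step is where I expect the real work to be. The class $\mathcal{N}$ is closed under extensions, coproducts and transfinite compositions of monomorphisms (by the long exact homology sequence and exactness of direct limits in $\Ab$), and a L\"owenheim--Skolem / Hill-lemma type cardinality argument — every object of $\mathcal{N}$ is filtered by subobjects in $\mathcal{N}$ of bounded size — shows that $\mathcal{N}$ is deconstructible; compare~\cite{Sto11,SaSt11}. For the preimages I would imitate the proof of Lemma~\ref{lem:deconstr_basic}(ii): writing $\mathcal{N} = \Filt{\mathcal{U}}$ for a set $\mathcal{U}$ and using that $\DifA(S,-)$ is exact and commutes with $\lambda$-sequences of inflations and their colimits, a small-object argument should produce a set $\mathcal{U}_S$ with $\DifA(S,-)^{-1}(\mathcal{N}) = \Filt{\mathcal{U}_S}$. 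The delicate point is precisely the bookkeeping needed to check that every category involved meets Hypothesis~\ref{hyp:exact} and that the inclusions and Hom-complex functors preserve filtrations, so that the machinery of Section~\ref{sec:approx} applies. Granting this, $\Y$ is deconstructible in $\G$, hence in $\C$; Proposition~\ref{prop:completeness} then yields a functorially complete cotorsion pair $\bigl(\Y,\ \{B \in \C \mid \Ext{1}{\C}{Y}{B} = 0 \text{ for all } Y \in \Y\}\bigr)$ in $\C$, and Lemma~\ref{lem:cotorsion_vs_Hom} converts it into a complete Hom-orthogonal pair $(\Y,\mathcal{Z})$ in $\T$ with $\mathcal{Z} = \Y^\perp$ — the desired right adjacent pair.
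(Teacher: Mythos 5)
Your overall route matches the paper's: realize $\T = \underline\C$ with $\C$ the cofibrant dg $\A$-modules inside $\G = \CpxA$, reduce via Lemma~\ref{lem:cotorsion_vs_Hom} and Proposition~\ref{prop:completeness} to showing $\Y$ is deconstructible in $\G$, note that $\Y$ is generating since it contains the projectives of $\C$, and convert back. Where you diverge from the paper is in the deconstructibility step, and there your argument has a genuine gap.

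You assert that every compact $S \in \DerA$ has a cofibrant representative that is a summand in $\CpxA$ of a bounded complex of finitely generated projective graded $\A$-modules, and from this you deduce that $\DifA(S,-)$ preserves coproducts and filtered colimits. That claim is not true for a general small dg category $\A$. By Keller's characterization quoted in the proof of the theorem, $S$ is only guaranteed to be a summand in $\CpxA$ of $S' \oplus P$ where $S'$ is finitely filtered by the free modules but $P$ is projective in $\CpxA$ of possibly infinite rank, and there is no reason the splitting idempotent should restrict to $S'$. Consequently $\DifA(S,-)$ is a summand of $\DifA(S',-) \oplus \DifA(P,-)$, and the second summand need not preserve coproducts or filtered colimits, so the preimage $\DifA(S,-)^{-1}(\mathcal{N})$ need not be deconstructible by any small-object argument. (For $\A$ an ordinary ring with trivial grading your claim is correct, since perfect complexes strictify to bounded complexes of finitely generated projectives, but the theorem must cover arbitrary compactly generated algebraic $\T$.)

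The paper's proof circumvents precisely this obstacle by not working with the dg Hom complex at all. It uses $\Ext{1}{\CpxA}{S}{-}$ instead, where the problematic projective summand $P$ dies outright since $\Ext{1}{\CpxA}{P}{-} = 0$. Via the short exact sequence $0 \to S'[-1] \to F_\lambda F(S') \to S' \to 0$, the object $S'$, hence $S$, has a projective resolution by compact projectives, so $\Ext{1}{\CpxA}{S}{-}$ preserves direct limits and pure subobjects; the class $\Y'$ of all $M \in \CpxA$ with $\Ext{1}{\CpxA}{\Sigma S}{M} = 0$ for every $S \in \clS$ is then deconstructible by the module-theoretic argument of~\cite[Lemma 3.2.7]{GT}, and $\Y = \Y' \cap \C$ is deconstructible by Proposition~\ref{prop:deconstr_intersect}(ii) together with Lemma~\ref{lem:deconstr_basic}(ii). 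None of the bookkeeping you flag as the ``real work'' (deconstructibility of $\mathcal{N}$, preimages under Hom-complex functors) is needed in this formulation.
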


\begin{proof}
Under our assumptions, $\T$ is triangle equivalent to $\DerA$ for a small dg category $\A$; see Proposition~\ref{prop:alg_comp_gen}. Now we will need the description of $\DerA$ from Proposition~\ref{prop:models_for_dg} and Remark~\ref{rem:cofibrant}. Recall that the category $\CpxA$ is essentially a module category by Lemma~\ref{lem:CA_modules}, and admits a cotorsion pair $(\C,\W)$, where $\C$ is the class of cofibrant objects in the sense of Definition~\ref{def:cofibrant} and $\W$ is the class of dg modules with zero cohomology. This cotorsion pair is generated by a set, so that it is functorially complete by Proposition~\ref{prop:completeness} and $\C$ is deconstructible by Proposition~\ref{prop:deconstr_intersect}(i). Moreover, $\C$ with the exact structure induced from $\CpxA$ is a Frobenius exact category satisfying Hypothesis~\ref{hyp:exact} and the inclusion functor $\C \to \CpxA$ preserves $\lambda$-sequences of inflations and their compositions; see Remark~\ref{rem:cofibrant} and Lemma~\ref{lem:deconstr_basic}(i). Finally, $\T$ is triangle equivalent to the stable category $\underline\C$.

Next we turn our attention to compact objects in $\T$. Put $\clS = \{A\hat~[n] \mid A \in \A \textrm{ and } n \in \Z \}$, where $A\hat~$ are the ``free'' dg modules in the sense of~\cite[\S1.2]{KellerDG}. By~\cite[Theorem 5.3]{KellerDG}, an object $S \in \underline\C$ is compact \iff $S$ is a summand in $\CpxA$ of an object of the form $S' \oplus P$, where $P$ is projective in $\CpxA$ and $S' \in \CpxA$ admits a finite $\clS$-filtration. Keller~\cite[\S2.2]{KellerDG} also showed that there is a short exact sequence in $\CpxA$ of the form
\[ 0 \la S'[-1] \la F_\lambda F(S') \la S' \la 0 \]
where $S'[-1]$ is the graded shift of $S'$ and $F_\lambda\dd \GraA \to \CpxA$ is a left adjoint to the forgetful functor $F\dd \CpxA \to \GraA$. Observe that $F_\lambda F(S')$ is necessarily compact and projective in $\CpxA$ (see the proof of Lemma~\ref{lem:CA_modules}). Hence, $S'$ has a projective resolution in $\CpxA$ of the form
\[ \cdots \la P_2 \la P_1 \la P_0 \la S' \la 0 \]
with all $P_i$ compact and projective. Consequently, both the functors $\Ext{1}{\CpxA}{S'}{-}$ and $\Ext{1}{\CpxA}{S}{-}\dd \CpxA \to \Ab$ preserve direct limits and  pure subobjects. Here we call a subobject $M$ of $N$ \emph{pure} if the inclusion $M \subseteq N$ is a direct limit of split monomorphisms.

Now suppose that $(\X,\Y)$ is a Hom-orthogonal pair in $\underline\C$ which is generated by a set $\clS$ of compact objects. Since $\C$ and $\underline\C$ have the same objects, we can view $\X$ and $\Y$ as full subcategories of $\C$, and Lemma~\ref{lem:cotorsion_vs_Hom} implies that $(\Sigma\X,\Y)$ is the cotorsion pair in $\C$ generated by $\Sigma\clS$.

Observe that Proposition~\ref{prop:completeness}(ii) implies that there is a cotorsion pair $(\Sigma\X,\Y')$ in $\CpxA$. Indeed, the description of the left hand classes of the cotorsion pairs generated by $\Sigma\clS$ in $\C$ and $\CpxA$ coincides. Then clearly $\Y = \Y' \cap \C$ and, by the discussion above, $\Y'$ is closed under direct limits and pure subobjects in $\CpxA$. Since $\CpxA$ is up to equivalence a module category over a small preadditive category, the same argument as for~\cite[Lemma 3.2.7]{GT} applies and allows us to deduce that $\Y'$ is deconstructible in $\CpxA$. Since we know that $\C$ is also deconstructible in $\CpxA$, so is $\Y = \Y' \cap \C$ by Proposition~\ref{prop:deconstr_intersect}(ii). Hence $\Y$ is also deconstructible in $\C$ by Lemma~\ref{lem:deconstr_basic}(ii).

Let us summarize: We know that $\Y = \Filt\U$ for a set of objects in $\C$, and $\Y$ certainly contains all injectives, hence projectives, and is closed under summands. Proposition~\ref{prop:completeness} then tells us that the cotorsion pair in $\C$ generated by $\U$ is of the form $(\Y,\mathcal{Z}')$ and it is complete. Thus, we obtain a complete Hom-orthogonal pair $(\Y,\mathcal{Z})$, where $\mathcal{Z} = \Sigma\mathcal{Z}'$, simply by invoking Lemma~\ref{lem:cotorsion_vs_Hom}.
\end{proof}

Knowing that compactly generated Hom-orthogonal pairs always admit right adjacent Hom-orthogonal pairs, one can ask when the converse is true.

\begin{question}[Unstable telescope conjecture] \label{ques:unstable_tc}
For which compactly generated triangulated categories $\T$ is it true that a Hom-orthogonal pair $(\X,\Y)$ admits a right adjacent pair \iff it is compactly generated?
\end{question}

If we restrict only to stable Hom-orthogonal pairs, that is those for which $\Sigma\X = \X$, this problem is known as the telescope conjecture.  The question arose in topology~\cite[Conjecture 3.4]{Bous79}, \cite[Conjecture 1.33]{Rav84} and was reformulated to a form very close to our question in~\cite[Definition 3.3.2]{HPS97}.

There certainly are algebraic triangulated categories which do not have the property asked for in Question~\ref{ques:unstable_tc}, see~\cite{KallerSmash}. However, at least the stable version is known to be true for $\T = \Der\ModR$ if $R$ is commutative noetherian~\cite{Nee92} or right hereditary~\cite{KS10}, but very little seems to be known without the restrictive assumption $\Sigma\X = \X$. A hint that the answer to Question~\ref{ques:unstable_tc} may be positive in the commutative noetherian case is given by the classification of cotilting classes in~\cite{APST12}, a relation of which to Hom-orthogonal pairs is discussed more in detail in~\cite{ASa12}.

\section{Classifying compactly generated $t$-structures and co-$t$-structures}
\label{sec:co-t-str}

After defining the necessary concepts, we will show in this section that there is often no difference between the problems of classifying compactly generated $t$-structures and classifying compactly generated co-$t$-structures. As $t$-structures are more classical, several results regarding the former problem are available in the literature. We will work out this approach for the derived category $\T = \Der\ModR$ of a commutative noetherian ring, obtaining a classification of all compactly generated co-$t$-structures in terms of so-called filtrations by supports~\cite{AJS}.

\subsection{Compactly generated $t$-structures and co-$t$-structures}
\label{subsec:com_gen}

Let us recall the concepts of a (compactly generated) $t$-structure~\cite{BBD81} and a co-$t$-structure~\cite{Bond10,MSSS11,Pauk08} on a triangulated category.

\begin{definition} \label{def:t-structure}
Let $\T$ be a triangulated category. A pair of subcategories of $\T$, $(\X, \Y)$ is called a \emph{$t$-structure} on $\T$ if it satisfies the following properties:
\begin{enumerate}
 \item[(T0)] $\X$ and $\Y$ are closed under direct summands;
 \item[(T1)] $\Sigma \X \subseteq \X$, and $\Sigma\inv\Y \subseteq \Y$;
 \item[(T2)] $\Hom{\T}{\X}{\Sigma\inv\Y} = 0$;
 \item[(T3)] For any object $Z$ of $\T$ there is a distinguished triangle
	      $$
	      X \la Z \la \Sigma\inv Y \la \Sigma X
	      $$
	      with $X \in \X$ and $Y \in \Y$.
\end{enumerate}
We call a $t$-structure \emph{compactly generated} if there is a set $\clS$ of compact objects of $\T$ such that $\Y = \clS^{\perp}$. 
\end{definition}

\begin{definition} \label{def:co-t-structure}
Let $\T$ be a triangulated category. A pair of subcategories of $\T$, $(\A, \B)$ is called a \emph{co-$t$-structure} on $\T$ if it satisfies the following properties:
\begin{enumerate}
 \item[(C0)] $\A$ and $\B$ are closed under direct summands;
 \item[(C1)] $\Sigma\inv\A \subseteq \A$, and $\Sigma \B \subseteq \B$;
 \item[(C2)] $\Hom{\T}{\Sigma\inv\A}{\B} = 0$;
 \item[(C3)] For any object $Z$ of $\T$ there is a distinguished triangle
	      $$
	      \Sigma\inv A \la Z \la B \la A
	      $$
	      with $A \in \A$ and $B \in \B$.
\end{enumerate}
We call a co-$t$-structure \emph{compactly generated} if there is a set $\clS$ of compact objects of $\T$ such that $\B = \clS^{\perp}$.
\end{definition}

\begin{remark} \label{rem:(co-)t-structure}
Note that $(\X,\Y)$ is a $t$-structure \iff $\Sigma\X \subseteq \X$ and $(\Sigma\X,\Y)$ is complete Hom-orthogonal pair. Similarly, $(\A,\B)$ is a co-$t$-structure \iff $\Sigma\inv\A \subseteq \A$ and $(\Sigma\inv\A,\B)$ is a complete Hom-orthogonal pair. The axioms as stated above are not in a minimal form. It is well known that the triangle in (T3) is unique up to a canonical isomorphism, which makes condition (T0) superfluous. It is also straightforward to check that only one inclusion in each of (T1) and (C1) is necessary. The relation between compactly generated Hom-orthogonal pairs (Definition~\ref{def:Hom-pair}) and compactly generated (co-)$t$-structures is obvious.
\end{remark}

It also turns useful for us to introduce terminology from~\cite{KV88} which captures obvious closure properties of $\A,\B,\X,\Y$ as above:

\begin{definition} \label{def:preaisles}
Let $\T$ be a triangulated category. We call a class $\U$ of objects of $\T$ a \emph{left preaisle} if $\U$ is closed under extensions, direct summands and $\Sigma\U \subseteq \U$. The class $\U$ is said to be a \emph{right preaisle} if $\U$ is closed under extensions, direct summands and $\Sigma\inv\U \subseteq \U$.
\end{definition}

Now we obtain the following as an easy consequence of our previous results.

\begin{theorem}\label{thm:preaisles}
Let $\T = \bbD(e)$, where $\bbD$ is a stable derivator \st $\bbD(I)$ has small coproducts for each small category $I$.
\begin{ii}
\item There is a bijective correspondence between compactly generated $t$-structures $(\X,\Y)$ on $\T$ and left preaisles $\U \subseteq \T^c$ of the full subcategory of compact objects, given by
\begin{eqnarray*}
(\X,\Y) &\mapsto& \X \cap \T^c, \\
\U      &\mapsto& \big({^{\perp}({\U^{\perp}})}, \Sigma\U^{\perp}\big).
\end{eqnarray*}

\item There is a bijective correspondence between compactly generated co-$t$-struc\-tures $(\A,\B)$ on $\T$ and right preaisles $\U \subseteq \T^c$, given by
\begin{eqnarray*}
(\A,\B) &\mapsto& \A \cap \T^c, \\
\U      &\mapsto& \big({^{\perp}({\U^{\perp}})}, \Sigma\inv\U^{\perp}\big).
\end{eqnarray*}
\end{ii}
\end{theorem}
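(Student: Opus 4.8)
The plan is to derive Theorem~\ref{thm:preaisles} directly from Corollary~\ref{cor:description_of_compacts} together with the reformulation of $t$-structures and co-$t$-structures as Hom-orthogonal pairs with one extra closure condition, which is recorded in Remark~\ref{rem:(co-)t-structure}. Recall that a pair $(\X,\Y)$ is a $t$-structure exactly when $\Sigma\X\subseteq\X$ and $(\Sigma\X,\Y)$ is a complete Hom-orthogonal pair; dually $(\A,\B)$ is a co-$t$-structure exactly when $\Sigma\inv\A\subseteq\A$ and $(\Sigma\inv\A,\B)$ is complete. So the whole theorem reduces to tracking which compactly generated Hom-orthogonal pairs, under the bijection of Corollary~\ref{cor:description_of_compacts}, carry the additional (de)suspension-stability, and to checking that this stability translates on the side of compact objects into the left/right preaisle condition.

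First I would handle (ii). Suppose $\clS\subseteq\T^c$ is a right preaisle, i.e.\ closed under extensions, summands, and $\Sigma\inv\clS\subseteq\clS$. Since in particular $\clS=\ext\clS$, Corollary~\ref{cor:description_of_compacts} produces a compactly generated Hom-orthogonal pair $(\A_0,\B_0)=({}^\perp(\clS^\perp),\clS^\perp)$ with $\A_0\cap\T^c=\clS$; here $\B_0=\Sigma\inv\clS^\perp$ after we shift, so set $\A=\A_0$ and $\B=\Sigma\inv\clS^\perp$. It remains to check $\Sigma\inv\A\subseteq\A$, equivalently $\Sigma\B\subseteq\B$, equivalently $\clS^\perp$ is closed under $\Sigma$, equivalently $\Sigma\inv\clS^\perp\subseteq\clS^\perp$, equivalently $\clS\subseteq{}^\perp(\Sigma\inv\clS^\perp)={}^\perp(\Sigma\clS)^\perp$. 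But $\Sigma\inv\clS\subseteq\clS$ gives $\Sigma\clS^\perp\subseteq\clS^\perp$ directly: if $\T(S,B)=0$ for all $S\in\clS$ then $\T(S,\Sigma B)=\T(\Sigma\inv S,B)=0$ since $\Sigma\inv S\in\clS$. Hence $(\A,\B)$ is a co-$t$-structure, compactly generated by construction. Conversely, if $(\A,\B)$ is a compactly generated co-$t$-structure with $\B=\clS^\perp$ for a set $\clS$ of compacts, then $(\Sigma\inv\A,\B)$ is a complete Hom-orthogonal pair generated by $\clS$, so by Corollary~\ref{cor:description_of_compacts} it corresponds to $\ext\clS=(\Sigma\inv\A)\cap\T^c$, and $\Sigma\inv\A\subseteq\A$ forces this intersection to be $\Sigma\inv$-stable, i.e.\ a right preaisle; unwinding, $\A\cap\T^c$ is a right preaisle as well (it equals $\Sigma$ applied to the previous, hence still extension- and summand-closed and $\Sigma\inv$-stable). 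The two assignments are mutually inverse: on the Hom-orthogonal-pair level this is Corollary~\ref{cor:description_of_compacts}, and the passage between $(\Sigma\inv\A,\B)$ and $(\A,\B)$ is a bijection because $\Sigma$ is an autoequivalence. One only needs to verify that the formula $\U\mapsto({}^\perp(\U^\perp),\Sigma\inv\U^\perp)$ in the statement matches this: indeed ${}^\perp(\U^\perp)$ is the $\A$-component of the Hom-orthogonal pair generated by $\U$ (which has $\Sigma\inv$ of it as aisle), so $\Sigma$ of that aisle is ${}^\perp(\U^\perp)$, consistent with $\A$, while the $\B$-component is $\Sigma\inv\U^\perp$.

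For (i) the argument is the mirror image, now attaching to a left preaisle $\U$ (closed under extensions, summands, and $\Sigma\U\subseteq\U$) the pair $({}^\perp(\U^\perp),\Sigma\U^\perp)$: one checks $\Sigma\U\subseteq\U$ forces $\Sigma\inv\U^\perp\subseteq\U^\perp$, whence $\Sigma\X\subseteq\X$ where $\X={}^\perp(\U^\perp)$, giving a $t$-structure; conversely a compactly generated $t$-structure $(\X,\Y)$ yields via Remark~\ref{rem:(co-)t-structure} a complete Hom-orthogonal pair $(\Sigma\X,\Y)$ generated by a set of compacts, whose compact part $(\Sigma\X)\cap\T^c$ is $\Sigma$-stable by $\Sigma\X\subseteq\X$, hence $\X\cap\T^c$ is a left preaisle, and Corollary~\ref{cor:description_of_compacts} gives the inverse bijection. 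I do not anticipate a serious obstacle here; the only mildly delicate point is bookkeeping the shift by $\Sigma$ that appears in Lemma~\ref{lem:cotorsion_vs_Hom} and in the passage from a ($t$- or co-$t$-) structure to the associated Hom-orthogonal pair, so that the explicit formulas displayed in the theorem statement line up correctly with the assignments of Corollary~\ref{cor:description_of_compacts}. A clean way to organize this is to first prove the bijection at the level of Hom-orthogonal pairs (which is exactly Corollary~\ref{cor:description_of_compacts}) and then observe that both the ``$\Sigma$-stable aisle'' condition on $\T$ and the ``preaisle'' condition on $\T^c$ are preserved under the correspondence, using compactness (so that $\T(\Sigma\inv S,-)\cong\T(S,\Sigma-)$ and the orthogonality computations go through termwise).
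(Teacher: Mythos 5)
Your proposal is correct and takes exactly the route the paper intends: the paper's proof is the one-liner ``easy consequence of Corollary~\ref{cor:description_of_compacts},'' and you supply precisely the bookkeeping — passing to the Hom-orthogonal pair $(\Sigma\inv\A,\B)$ resp.\ $(\Sigma\X,\Y)$ via Remark~\ref{rem:(co-)t-structure}, invoking the corollary, and checking that the extra (de)suspension stability translates into the preaisle condition. (One small slip: in your chain of equivalences for (ii) you write ``$\Sigma\inv\clS^\perp\subseteq\clS^\perp$'' where the condition you actually need, and in fact later prove, is $\Sigma\clS^\perp\subseteq\clS^\perp$; this is a typo and does not affect the argument.)
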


\begin{proof}
This is an easy consequence of Corollary~\ref{cor:description_of_compacts}.
\end{proof}

Suppose now we would like to classify compactly generated co-$t$-structures on given $\T$. If we are lucky, we can find a nice enough triangulated category $\T'$ with small coproducts \st $(\T')^c$ is triangle equivalent to $(\T^c)^\op$. If it happens that we can classify compactly generated \emph{$t$-structures} on $\T'$, we are done. At least the first step, picking a candidate for $\T'$, is usually very easy. In the sequel, we are going to focus on one particular setting, where the situation is so favorable in that $\T^c$ is self-dual and so we can even choose $\T = \T'$.

\subsection{The tensor triangulated case}
\label{subsec:tensor_triangulated}

Often, the triangulated structure comes along with a closed symmetric monoidal product---e.g.\ some version of tensor product in algebra or the smash product in topology. The following formalism taken from~\cite{BaFa11,HPS97} will suit our needs:

\begin{definition} \label{def:tensor_triang}
Let $\T$ be a triangulated category. Then $(\T,\otimes,\unit)$ is \emph{tensor triangulated} if $\otimes$ is a closed symmetric monoidal product on $\T$ compatible with the triangulated structure in the sense of~\cite[Appendix A.2]{HPS97}. This in particular means that we have a functor
\[ \otimes\dd \T \times \T \la \T \]
together with natural isomorphisms $X \otimes (Y \otimes Z) \cong (X \otimes Y) \otimes Z$, $\unit \otimes X \cong X \cong X \otimes \unit$ and $X \otimes Y \cong Y \otimes X$ satisfying certain coherence conditions. Moreover, $- \otimes Y$ must admit a right adjoint $\RHom{}Y-$ for each $Y \in \T$, so that naturally
\[ \T(X \otimes Y, Z) \cong \T\big(X, \RHom{}YZ\big) \qquad \textrm{for all } X,Y,Z \in \T, \]
and $-\otimes-$ and $\RHom{}--$ must be triangulated functors in both variables.

Given a tensor triangulated category $\T$, an object $X \in \T$ is called \emph{rigid} (or \emph{strongly dualizable}) if, for every $Y \in \T$ the natural map $\nu\dd \RHom{}X\unit \otimes Y \to \RHom{}XY$, constructed for instance in~\cite[p. 120]{LMS86}, is an isomorphism.

Finally, $\T$ is \emph{rigidly compactly generated} if
\begin{ii}
\item $\T$ is compactly generated in the sense of Definition~\ref{def:comp_gen_triang},
\item $\unit$ is compact, and
\item every compact object is rigid.
\end{ii}
\end{definition}

\begin{remark}
One requires in~\cite[Definition A.2.1]{HPS97} that $\RHom{}-Z$ sends triangles to triangles only up to sign. This can be, however, easily fixed by changing the sing of the natural isomorphism $\RHom{}{\Sigma(-)}{Z} \to \Sigma\inv\RHom{}-Z$, which is a part of the data necessary to give a triangulated functor (see~\cite[Definition 2.1.1]{Nee01}).
\end{remark}

Despite some technicality, there are many naturally occurring examples: the derived category of a commutative ring, the stable category of representations of a finite group $G$ over a field, or the homotopy category of spectra. We refer to~\cite[Example 1.2.3]{HPS97} or~\cite[Example 1.2]{BaFa11}. We are mostly interested in the first item of the above list:

\begin{proposition} \label{prop:derived_comm}
Let $R$ be a commutative ring and denote $\T = \Der\ModR$. Then $(\T,\LOtimes R{}{},R)$, where $\LOtimes R--$ is the usual derived tensor product, is tensor triangulated and rigidly compactly generated. The right adjoint to $\LOtimes R-Y$ is the usual right derived functor $\RHom RY-$ of $\Hom RY-$.
\end{proposition}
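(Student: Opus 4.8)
The plan is to split the statement into two essentially independent tasks: first, the closed symmetric monoidal structure and its compatibility with the triangulation; second, rigid compact generation. For the first I would lean on the projective model structure on $\Cpx\ModR$ set up in \S\ref{subsec:model}; for the second I would use that $R$ is a compact generator together with a thick-subcategory argument for rigidity.

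For the monoidal structure, recall from Example~\ref{expl:efficient-cpx} and Proposition~\ref{prop:models_for_dg} that $\Cpx\ModR$ carries the projective abelian model structure whose weak equivalences are the quasi-isomorphisms and whose homotopy category is $\Der\ModR$, and that the point-set category $(\Cpx\ModR,\otimes_R,R)$ is closed symmetric monoidal with internal hom the usual $\Hom_R$-complex. The key point is that every cofibrant complex in the sense of Definition~\ref{def:cofibrant} is K-flat: by Remark~\ref{rem:cofibrant} a cofibrant complex is a summand of an $\clS$-filtered complex with $\clS=\{R[n]\mid n\in\Z\}$, such a complex is a transfinite extension of shifts of $R$ (hence K-flat), and summands of K-flat complexes are K-flat. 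Consequently $-\otimes_R C$ preserves quasi-isomorphisms for $C$ cofibrant, which is exactly what is needed for the pushout-product axiom and the unit axiom. Thus $\Cpx\ModR$ is a symmetric monoidal model category and Hovey's machinery~\cite[Chapter~4]{Hov99} endows $\Ho(\Cpx\ModR)=\Der\ModR$ with a closed symmetric monoidal structure: the derived product is $\LOtimes{R}{-}{-}$ computed via cofibrant (equivalently K-flat) resolutions, and the right adjoint to $\LOtimes{R}{-}{Y}$ is the total right derived functor $\RHom{R}{Y}{-}$ of $\Hom_R(Y,-)$, the derived adjunction being inherited from the point-set one. Compatibility with the triangulation in the sense of~\cite[Appendix~A.2]{HPS97} --- that both functors are triangulated in each variable and satisfy the sign coherences --- is standard for the homotopy category of a stable symmetric monoidal model category; alternatively one checks directly that deriving a bifunctor exact in each variable yields a triangulated bifunctor.

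For rigid compact generation, note first that $R=\unit$ is compact, since $\T(R,-)\cong H^0(-)$ commutes with coproducts; moreover $\clS=\{R[n]\mid n\in\Z\}$ generates, because $H^{-n}(Y)=\T(R[n],Y)=0$ for all $n$ forces $Y\cong0$. Hence $\T$ is compactly generated; since it is also algebraic (it is the stable category of the Frobenius category of cofibrant complexes, cf.~Remark~\ref{rem:cofibrant}), Proposition~\ref{prop:alg_comp_gen_deriv} puts $\T$ at the base of a stable derivator with coproducts, so Theorem~\ref{thm:description_of_compacts}(ii) applied to the set $\clS$ above --- which generates the Hom-orthogonal pair $(\T,0)$ since $\clS^\perp=0$ --- gives $\T^c=\ext\clS$, i.e.\ the thick subcategory generated by $R$, namely the perfect complexes. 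It remains to see every perfect complex is rigid. The natural map $\nu\dd\RHom{}{X}{\unit}\otimes Y\to\RHom{}{X}{Y}$ of~\cite[p.~120]{LMS86} is a natural transformation of triangulated functors in the variable $X$ (it respects shifts and cones by construction) and is an isomorphism for $X=\unit$. Therefore the full subcategory of $X$ for which $\nu$ is an isomorphism for all $Y$ is thick --- closed under shifts, cones (the triangulated five lemma) and summands --- and contains $R$, hence contains all of $\T^c$. This is condition~(iii) of rigid compact generation.

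The only genuinely delicate point is the verification that cofibrant complexes are K-flat and, through it, that the projective model structure on $\Cpx\ModR$ is monoidal; once this is in place the rest is routine bookkeeping with standard model-categorical and triangulated arguments. An alternative that avoids model categories is to construct $\LOtimes{R}{-}{-}$ and $\RHom{R}{-}{-}$ by hand via K-flat and K-injective resolutions and to verify the coherence isomorphisms of~\cite[Appendix~A.2]{HPS97} directly; this is more elementary but considerably longer, so I would prefer the model-categorical route.
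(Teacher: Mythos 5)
Your proof is correct. The paper does not actually prove this proposition; it simply declares it ``well known'' and points to \cite[Example 5.9]{Krause10}, so any complete argument is necessarily more detailed than the paper's one line. What you write is the standard argument the paper has in mind: build the closed symmetric monoidal model structure on $\Cpx\ModR$ by checking that cofibrant complexes are K-flat, pass to the homotopy category to get a tensor triangulated $\Der\ModR$, identify compacts with perfect complexes, and then deduce rigidity of compacts by noting that the full subcategory on which $\nu\dd \RHom{}{X}{\unit}\otimes Y \to \RHom{}{X}{Y}$ is an isomorphism for all $Y$ is thick and contains $R$. One small remark: invoking Proposition~\ref{prop:alg_comp_gen_deriv} and Theorem~\ref{thm:description_of_compacts}(ii) to identify $\T^c$ with the thick closure of $R$ is correct but considerably heavier than necessary; this identification is a classical fact (going back to Neeman and Rickard) and does not need the derivator machinery, which the paper develops for other purposes. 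Using it creates no circularity here since those results are proved independently, but a direct citation would be cleaner and more in the spirit of the paper's own terse handling of this proposition.
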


\begin{proof}
This is well known, see also \cite[Example 5.9]{Krause10}.
\end{proof}

As far as we are concerned, the following properties of rigidly compactly generated tensor triangulated categories are important.

\begin{lemma} \label{lem:duality_compacts}
Let $(\T,\otimes,\unit)$ be tensor triangulated and rigidly compactly generated. Then an object $X \in \T$ is rigid \iff it is compact. Moreover, the functor $\RHom{}-\unit\dd \T^\op \to \T$ is induces a triangle equivalence $(\T^c)^\op \to \T^c$.
\end{lemma}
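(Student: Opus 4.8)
The plan is to establish the two assertions separately, both resting on the standard yoga of rigid (dualizable) objects in a closed symmetric monoidal triangulated category, which I would import from~\cite[Appendix A.2]{HPS97} and~\cite{LMS86}. Throughout write $D = \RHom{}{-}{\unit}\dd \T^\op \to \T$ for the dualizing functor and $DX = \RHom{}{X}{\unit}$.

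\emph{Rigid if and only if compact.} One implication is built into the hypotheses: condition~(iii) in the definition of a rigidly compactly generated category says every compact object is rigid. For the converse I would argue that rigidity forces compactness using that $\unit$ is compact. If $X$ is rigid, then by definition $\nu\dd DX \otimes Y \to \RHom{}{X}{Y}$ is an isomorphism for every $Y$, so the closed monoidal adjunction gives natural isomorphisms
\[
\T(X,Y) \cong \T(\unit \otimes X, Y) \cong \T\big(\unit, \RHom{}{X}{Y}\big) \cong \T\big(\unit, DX \otimes Y\big).
\]
Since $-\otimes Y'$ is a left adjoint for every $Y'$, the functor $DX \otimes -$ preserves all coproducts, and $\T(\unit,-)$ preserves coproducts because $\unit$ is compact (hypothesis~(ii)); hence the right-hand functor in $Y$ preserves coproducts, and therefore so does $\T(X,-)$. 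Thus $X$ is compact.

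\emph{The duality equivalence.} First, $D\dd \T^\op \to \T$ is a triangulated functor because $\RHom{}{-}{\unit}$ is triangulated in the first variable (after the sign adjustment described in the Remark following Definition~\ref{def:tensor_triang}). Next I would check that $D$ restricts to a functor $(\T^c)^\op \to \T^c$: a compact object $X$ is rigid, the dual of a rigid object is again rigid (a formal consequence of dualizability, cf.~\cite[Appendix A.2]{HPS97}), and so $DX$ is rigid, hence compact by the first part. Finally, for any rigid object the canonical double-duality morphism $X \to DDX$ is an isomorphism (again~\cite[Appendix A.2]{HPS97}); restricting this natural isomorphism to $\T^c$ exhibits $D\dd (\T^c)^\op \to \T^c$ as an equivalence, with (the opposite of) $D$ itself as a quasi-inverse. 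Since $D$ is triangulated, this is a triangle equivalence.

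\emph{Where the work is.} I do not expect a genuine obstacle here: essentially all the content sits in the imported facts about rigid objects, so the real care is bookkeeping — keeping track of the contravariance of $D$, of the sign normalization in the triangulated structure of $\RHom{}{-}{-}$, and of the precise natural map used as $X \to DDX$. If one wanted a self-contained account, the only non-formal points to verify by hand would be that $DX$ is rigid and that $X \to DDX$ is invertible for rigid $X$; these come out of manipulating the evaluation and coevaluation maps of the dualizable structure, but are cleanest to quote.
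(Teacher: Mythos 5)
Your proof is correct and follows essentially the same route as the paper, which reduces both assertions to the standard dualizability facts quoted from Hovey--Palmieri--Strickland and Lewis--May--Steinberger: compact $\Leftrightarrow$ rigid, $\RHom{}{-}{\unit}$ carries compacts to compacts, and the double-duality map is an isomorphism on rigid objects. The only difference is that where the paper simply cites \cite[Theorem~2.1.3]{HPS97} for the equivalence of rigidity and compactness, you supply a clean self-contained argument for the direction ``rigid $\Rightarrow$ compact'' via the compactness of $\unit$ and the fact that $DX \otimes -$ preserves coproducts, which is a nice way to see why that hypothesis is needed.
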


\begin{proof}
The equivalence between rigidity and compactness has been proved in \cite[Theorem 2.1.3]{HPS97}. If $X \in \T^c$, then $\RHom{}X\unit$ is compact by~\cite[III.Proposition 1.2]{LMS86} and the natural morphism $\rho\dd X \to \RHom{}{\RHom{}X\unit}\unit$ in $\T$, adjoint to the adjunction counit $\varepsilon\dd X \otimes \RHom{}X\unit \to \unit$, is an isomorphism by~\cite[Proposition III.1.3(i)]{LMS86}. Hence the restriction of $\RHom{}-\unit$ to compact objects induces a category equivalence $(\T^c)^\op \to \T^c$.
\end{proof}

Now we are able to adapt Theorem~\ref{thm:preaisles} to the tensor triangulated situation.

\begin{theorem}\label{thm:preaisles_tensor}
Let $(\T,\otimes,\unit)$ be tensor triangulated rigidly compactly generated and suppose that $\T = \bbD(e)$ for a stable derivator $\bbD$ \st $\bbD(I)$ has small coproducts for each $I$. Then there are bijective correspondences between
\begin{ii}
\item left preaisles $\U \subseteq \T^c$,
\item compactly generated $t$-structures $(\X,\Y)$ on $\T$,
\item compactly generated co-$t$-structures $(\A,\B)$ on $\T$.
\end{ii}
\end{theorem}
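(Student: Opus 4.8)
The plan is to combine Theorem~\ref{thm:preaisles} with the self-duality of $\T^c$ coming from Lemma~\ref{lem:duality_compacts}. By Theorem~\ref{thm:preaisles}, since $\T = \bbD(e)$ with $\bbD(I)$ having small coproducts, compactly generated $t$-structures on $\T$ correspond bijectively to left preaisles of $\T^c$, and compactly generated co-$t$-structures on $\T$ correspond bijectively to right preaisles of $\T^c$. So the content of the theorem reduces to producing a bijection between left preaisles and right preaisles in $\T^c$, and this is exactly what the duality should provide.

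The key step is the following observation about the triangle equivalence $\bbD = \RHom{}-\unit \dd (\T^c)^\op \to \T^c$ from Lemma~\ref{lem:duality_compacts}. A full subcategory $\U \subseteq \T^c$ is a left preaisle (closed under extensions, summands, and $\Sigma$) if and only if its image $\bbD(\U) \subseteq \T^c$ is a right preaisle. Indeed, $\bbD$ is a contravariant triangle equivalence, so it sends extensions to extensions (reversing the roles of sub and quotient), sends summands to summands, and since it is triangulated it intertwines the suspension on the source with the suspension on the target; because it is contravariant, $\Sigma\U \subseteq \U$ translates into $\Sigma\inv\bbD(\U) \subseteq \bbD(\U)$. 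Hence $\U \mapsto \bbD(\U)$ is a bijection between the poset of left preaisles of $\T^c$ and the poset of right preaisles of $\T^c$, with inverse $\V \mapsto \bbD\inv(\V)$ (which one can take to be $\RHom{}-\unit$ again, up to natural isomorphism, using that $\rho\dd X \to \RHom{}{\RHom{}X\unit}\unit$ is an isomorphism on compacts).

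Composing this with the two bijections from Theorem~\ref{thm:preaisles}, I obtain the desired triple of bijective correspondences: (i) left preaisles $\U \subseteq \T^c$, (ii) compactly generated $t$-structures on $\T$, via $\U \mapsto ({^\perp(\U^\perp)}, \Sigma\U^\perp)$, and (iii) compactly generated co-$t$-structures on $\T$, via $\U \mapsto ({^\perp(\bbD(\U)^\perp)}, \Sigma\inv\bbD(\U)^\perp)$, where $\bbD = \RHom{}-\unit$. One should spell out the maps explicitly so the statement is self-contained, but no further work is needed.

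I do not expect a genuine obstacle here: everything rests on Theorem~\ref{thm:preaisles} and Lemma~\ref{lem:duality_compacts}, both already proved. The one point that deserves a sentence of care is checking that a contravariant triangle equivalence really does exchange left and right preaisles, in particular the behaviour of the extension-closure condition under reversing arrows and of $\Sigma$ versus $\Sigma\inv$; this is routine but worth stating cleanly so that the reader sees why $t$-structures on one side match co-$t$-structures on the other rather than $t$-structures matching $t$-structures.
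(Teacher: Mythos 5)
Your proof is correct and takes essentially the same route as the paper: combine the two bijections from Theorem~\ref{thm:preaisles} with the involutive anti-equivalence $\RHom{}-\unit$ on $\T^c$ from Lemma~\ref{lem:duality_compacts}, which exchanges left and right preaisles. The paper denotes the image $\bbD(\U)$ by $\U^*$ and phrases the verification that this swaps left/right preaisles as ``clear,'' while you have spelled it out; otherwise the arguments coincide.
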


\begin{proof}
The bijection between (i) and (ii) is the same as in Theorem~\ref{thm:preaisles}(i). Given a full subcategory $\U \subseteq \T^c$, we denote by $\U^*$ the full subcategory given by
\[
\U^* = \{ X \mid X \cong \RHom{}Y\unit \textrm{ for some } Y \in \U \}.
\]
Clearly, the assignment $\U \mapsto \U^*$ induces an equivalence between left and right preaisles in $\T^c$. Composing this with the bijection from Theorem~\ref{thm:preaisles}(ii), we get the bijection between left preaisles in $\T^c$ and compactly generates co-$t$-structures on $\T$ given by
\[ \U \ \mapsto\  \big( {^\perp((\U^*)^\perp)} , \Sigma\inv(\U^*)^\perp \big). \qedhere \]
\end{proof}

\subsection{Classification for commutative noetherian rings}
\label{subsec:comm_noeth}

Now we aim to one of the main results which motivated this paper---the classification of compactly generated co-$t$-structures on $\Der\ModR$ for $R$ commutative noetherian. Our strategy is simple---we use the classification of compactly generated $t$-structures from~\cite{AJS} and use Theorem~\ref{thm:preaisles_tensor}. We start with a definition.

\begin{definition} \label{def:supports}
Let $R$ be a commutative ring. Then a \emph{filtration of $\Spec{R}$ by supports} is a decreasing map 
\[ \Phi \dd \Z \to 2^{\Spec{R}} \]
such that $\Phi(i)$ is a specialization closed subset of $\Spec{R}$ for each $i \in \Z$. Here we call a subset $X \subseteq \Spec{R}$ \emph{specialization closed} if $\p \in X$ implies $\q \in X$ for any $\q \in \Spec{R}$ such that $\p \subseteq \q$.
\end{definition}

Now we can formulate the classification of compactly generated $t$-structures.

\begin{proposition} \label{prop:t-structures} \cite[Theorem 3.10]{AJS}.
Let $R$ be a commutative noetherian ring. Then there is a bijective correspondence between filtrations $\Phi$ of $\Spec{R}$ by supports and compactly generated $t$-structures $(\X,\Y)$ on $\Der\ModR$. Given a $t$-structure $(\X,\Y)$, the corresponding filtration is given by
\[ \Phi(i) = \{ \p \in \Spec{R} \mid \Sigma^{-i}(R/\p) \in \X \}. \]
\end{proposition}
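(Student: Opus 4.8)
This is \cite[Theorem 3.10]{AJS}; here is the plan one would follow to prove it. By Theorem~\ref{thm:preaisles}(i) it is enough to match left preaisles $\U\subseteq\T^c$ of perfect complexes with filtrations of $\Spec R$ by supports, the aisle of the associated compactly generated $t$-structure being recovered from $\U$ as $\X={}^\perp(\U^\perp)$. The first step is to check that, for a compactly generated $t$-structure $(\X,\Y)$, the formula $\Phi(i)=\{\p\mid\Sigma^{-i}(R/\p)\in\X\}$ really defines a filtration by supports. That $\Phi$ is decreasing is immediate, since $\Sigma\X\subseteq\X$ forces $\Sigma^{-i}(R/\p)=\Sigma\bigl(\Sigma^{-(i+1)}(R/\p)\bigr)\in\X$ whenever $\p\in\Phi(i+1)$. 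For specialization-closedness one shows that $\mathcal M_i:=\{M\in\rfmod R\mid\Sigma^{-i}M\in\X\}$ is a Serre subcategory of $\rfmod R$: it is extension closed because aisles are, and it is closed under sub- and quotient objects because, $R$ being noetherian, passing between a module and its subs/quotients can be realized by cones of maps between objects of $\X$ together with cohomological truncations, operations an aisle is stable under (using $\Sigma\X\subseteq\X$). By the Gabriel correspondence a Serre subcategory of $\rfmod R$ is cut out by a specialization-closed set, namely $\Phi(i)=\bigcup_{M\in\mathcal M_i}\Supp{M}$.

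Next I would build the inverse map. Given a filtration $\Phi$ by supports, for each $i$ and each $\p\in\Phi(i)$ fix a finite generating set of $\p$ (possible since $R$ is noetherian) and let $K_i(\p)$ be the $i$-th suspension of the associated Koszul complex, a perfect complex with homology supported on $V(\p)$; since $\Phi$ is decreasing, the set $\clS_\Phi=\{K_i(\p)\mid i\in\Z,\ \p\in\Phi(i)\}$ of compact objects is closed under $\Sigma$ up to isomorphism, so the Hom-orthogonal pair it generates is complete by Proposition~\ref{prop:compactly-generated-pairs} and is a genuine compactly generated $t$-structure $(\X_\Phi,\Y_\Phi)$ by Remark~\ref{rem:(co-)t-structure}. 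The key computation is the identification
\[
\X_\Phi=\{X\in\T\mid\Supp{H^j(X)}\subseteq\Phi(j)\ \text{for all }j\in\Z\},
\]
from which $\Phi_{(\X_\Phi,\Y_\Phi)}=\Phi$ drops out, since $\Supp{(R/\p)}=V(\p)$ and each $\Phi(j)$ is specialization closed. To prove the identification I would first treat a single prime: the cocomplete aisle generated by $K_i(\p)$ equals $\{X\mid\Supp{H^j(X)}\subseteq V(\p)\text{ for }j\ge i\text{ and }H^j(X)=0\text{ for }j<i\}$, independently of the chosen generators and coinciding with the aisle generated by $\Sigma^{-i}(R/\p)$ — this uses Neeman's classification~\cite{Nee92} of (co)localizing subcategories of $\Der\ModR$ by subsets of $\Spec R$. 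The general filtration is then assembled from these building blocks, invoking again that $R$ is noetherian, so that every finitely generated module has a finite filtration with factors $R/\q$ and every module is the directed union of its finitely generated submodules, and the homotopy-colimit description of aisle membership from Theorem~\ref{thm:description_of_compacts}(i) applies.

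Finally, to obtain a genuine bijection I would verify $\X_{\Phi_{(\X,\Y)}}=\X$ for every compactly generated $t$-structure $(\X,\Y)$: both sides are compactly generated aisles with the same associated Serre subcategories $\mathcal M_i$ for all $i$, and the displayed description of $\X_\Phi$ — now applied to the arbitrary compactly generated aisle $\X$ — shows such an aisle is determined by that data. The main obstacle is precisely this displayed identification, i.e.\ the statement that over a commutative noetherian ring a compactly generated aisle is \emph{homologically detected by supports}. Its proof is where compactness of the generators is essential (reducing aisle membership to maps out of perfect complexes and to the thick/localizing subcategory classification) and where noetherianness enters through finite prime filtrations; this is the technical heart of \cite[\S3]{AJS}, and everything else above is routine bookkeeping around it.
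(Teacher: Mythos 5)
The paper gives no proof of this proposition; it is cited verbatim from [AJS, Theorem 3.10], so there is no internal argument to compare against. Your outline does track the actual argument of [AJS], and the Koszul-complex generators you introduce are exactly the ones the paper itself uses in Proposition~\ref{prop:koszul}. The one step I would flag as more than bookkeeping is your justification that $\mathcal{M}_i$ is closed under subobjects and quotients: stability of an aisle under the standard cohomological truncations $\tau^{\le n},\tau^{\ge n}$ of $\Der\ModR$ is \emph{not} a formal consequence of the aisle axioms ($\Sigma\X\subseteq\X$ and closure under extensions, coproducts, summands do not by themselves give it), and for compactly generated aisles over a commutative noetherian ring it is one of the substantive intermediate results of [AJS, \S 3], proved via compactness, filtrations of finitely generated modules by primes, and localization --- precisely the same ingredients behind the ``homologically detected by supports'' identification $\X_\Phi=\{X\mid\Supp{H^j(X)}\subseteq\Phi(j)\ \forall j\}$ that you correctly single out as the technical heart. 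So the Serre-subcategory step belongs \emph{inside} that heart rather than preceding it: as written, the sketch is slightly circular at that point, and a careful version would establish truncation stability and the support characterization together (as [AJS] does), not invoke the former to set up the latter.
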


We also wish to describe the inverse bijection and in particular to obtain a set of compact generators for a given $t$-structure. In order to do that, it will be useful to introduce notation for certain Koszul complexes.

\begin{notation} \label{not:Koszul}
For every $\p \in \Spec R$, we fix a minimal generating set $\p = (x^\p_1, \dots, x^\p_{n_{\p}})$ and denote by $K_\p$ the Koszul complex
\[ K_\p = \bigotimes_{i=1}^{n_{\p}} (\cdots \la 0 \la R \overset{x^\p_i}\la R \la 0 \la \cdots), \]
where the cochain complexes in the tensor product are concentrated in degrees $-1$ and $0$.

We also denote by $(\Dle0,\Dge0)$ the canonical $t$-structure on $\Der\ModR$ and write as customary $\Dle n$ and $\Dge n$ for $\Sigma^{-n}\Dle0$ and $\Sigma^{-n}\Dge0$, respectively.
\end{notation}

Having adopted this notation, note that $K_\p \in \Dle0$ and $H^0(K_\p) \cong R/\p$. Now we have the following description of compactly generated $t$-structures on $\Der\ModR$.

\begin{proposition} \label{prop:koszul}
Let $R$ be a commutative noetherian ring and $\Phi \dd \mathbb Z \to 2^{\Spec{R}}$ be a filtration by supports. Then the compactly generated $t$-structure $(\X,\Y)$ corresponding to $\Phi$ as in Proposition~\ref{prop:t-structures} is generated by the set of compact objects
\[ \clS_\Phi = \{ \Sigma^{-i} K_\p \mid i \in \Z \textrm{ and } \p \in \Phi(i) \}. \]
in the sense that $\Y = \Sigma\clS_\Phi^\perp$. In particular we have the equality
\[
\Y = \{ Y \in \Der\ModR \mid \RHom{R}{\Sigma^{-i} K_\p}{Y} \in \Dge0
\textrm{ for all } i \in \Z \textrm{ and } \p \in \Phi(i) \}.
\]
\end{proposition}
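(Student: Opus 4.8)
The plan is to deduce this from the classification in Proposition~\ref{prop:t-structures} by showing that the $t$-structure generated by the set $\clS_\Phi$ is exactly the one attached to $\Phi$. Concretely, let $(\X',\Y')$ be the compactly generated $t$-structure with $\Y' = \Sigma\clS_\Phi^\perp$ (it exists by Theorem~\ref{thm:preaisles}(i), since $\clS_\Phi$ is a set of compact objects and the left preaisle it generates in $\T^c$ produces a compactly generated $t$-structure); by Proposition~\ref{prop:t-structures} it corresponds to a filtration $\Phi'$ via $\Phi'(i) = \{\p \mid \Sigma^{-i}(R/\p) \in \X'\}$. It suffices to prove $\Phi' = \Phi$, and then the final ``In particular'' formula is just the unwinding of $\Y = \Sigma\clS_\Phi^\perp$ together with the adjunction $\T(\Sigma^{-i}K_\p \otimes^{\mathbf L}_R \unit, Y) \cong \T(\unit, \RHom{R}{\Sigma^{-i}K_\p}{Y})$ and the standard fact that an object lies in $\Sigma Z = \Sigma\clS^\perp$ iff $\T(\Sigma S, Z)=0$ for all $S$, i.e.\ iff $\RHom R{\Sigma^{-i}K_\p}Y$ has no cohomology in degrees $<0$, which is precisely $\RHom R{\Sigma^{-i}K_\p}Y \in \Dge0$ (recall $\RHom R-Y$ sends $\Sigma^{-i}K_\p$ into the right place because $K_\p \in \Dle0$).

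For the containment $\Phi(i) \subseteq \Phi'(i)$: fix $\p \in \Phi(i)$. Since $K_\p \in \Dle0$ with $H^0(K_\p) \cong R/\p$, there is a triangle $\Dle{-1}$-part $\to K_\p \to R/\p \to$ in $\Der\ModR$; more usefully, $R/\p$ is obtained from $\Sigma^{j}R$'s with $j \ge 0$ appearing in $K_\p$ — but the clean statement is that $\Sigma^{-i}(R/\p)$ lies in the left preaisle generated by $\Sigma^{-i}K_\p$. Indeed $K_\p$ is a finite complex of free modules in non-positive degrees, so $K_\p$ belongs to $\ext(\{\Sigma^{j}R : j\ge 0\})$, hence $R/\p$, being a quotient of $K_\p$ only through cohomology, is reached from $\Sigma^{-i}K_\p$ by extensions and suspensions inside $\T^c$ once we invert nothing — the precise mechanism is the soft truncation: $\tau^{\ge 0}K_\p \simeq R/\p$ and the canonical triangle $\tau^{\le -1}K_\p \to K_\p \to R/\p \to \Sigma\tau^{\le -1}K_\p$ exhibits $R/\p$ as a cone, with $\tau^{\le -1}K_\p \in \Dle{-1}\cap\T^c \subseteq$ (the left preaisle generated by $\Sigma K_\p$, since that preaisle contains $\Sigma^{\ge 1}$ of everything it contains)... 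I would instead argue directly at the level of $\Y$: for $Y \in \Y'$ we have $\RHom R{\Sigma^{-i}K_\p}Y \in \Dge 0$, and applying $H^0$ of $\RHom R{\Sigma^{-i}R/\p}{-}$ via the triangle above shows $\RHom R{\Sigma^{-i}R/\p}Y \in \Dge 0$ too, i.e.\ $\Sigma^{-i}(R/\p)\in\X'$, giving $\p \in \Phi'(i)$. This uses only that $K_\p$ sits in degrees $[-n_\p,0]$.

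For the reverse containment $\Phi'(i) \subseteq \Phi(i)$: suppose $\p \notin \Phi(i)$; we must produce $Y \in \Y' $ with $\RHom R{\Sigma^{-i}R/\p}Y$ having nonzero cohomology in some negative degree, i.e.\ show $\Sigma^{-i}(R/\p)\notin\X'$. The natural candidate is the injective cogenerator-type witness used in~\cite{AJS}: take $Y$ to be (a suspension of) the injective envelope $E(R/\p)$, or more precisely a complex detecting the prime $\p$ at the right homological degree. The key computation is $\RHom R{K_\q}{E(R/\p)}$, which vanishes unless $\p \in V(\q)$ and in that case is concentrated in a single degree determined by $\dim$ or $\htt$ — this is the standard Koszul/injective duality $\Hom R{K_\q}{E(R/\p)} \cong$ (a shift of $E(R/\p)$) exactly when $\q \subseteq \p$. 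Choosing $Y = \Sigma^{i'}E(R/\p)$ for suitable $i'$, one checks $Y \in \clS_\Phi^\perp$ after shift because every $\Sigma^{-j}K_\q$ with $\q \in \Phi(j)$ satisfies either $\p \notin V(\q)$ (so the $\RHom$ vanishes) or $\q \subseteq \p$ and then, since $\Phi$ is decreasing and $\q\in\Phi(j)$ forces $\p \in \Phi(j)$ — contradicting $\p\notin\Phi(i)$ when one tracks the degrees carefully — the relevant cohomology still lands in $\Dge 0$. Then this same $Y$ pairs nontrivially with $\Sigma^{-i}(R/\p)$, so $\Sigma^{-i}(R/\p)\notin\X'$.

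\textbf{Main obstacle.} The genuinely delicate point is the reverse containment: constructing the witness object $Y$ and doing the bookkeeping of cohomological degrees so that $Y$ is simultaneously orthogonal to the whole generating set $\clS_\Phi$ (using that $\Phi$ is decreasing and specialization-closed) yet detects $\Sigma^{-i}(R/\p)$. This is precisely where the input of~\cite{AJS} and the local cohomology / Koszul duality computations $\RHom R{K_\q}{E(R/\p)}$ do the real work; the first containment and the final displayed reformulation are routine unwinding of adjunctions and truncation triangles.
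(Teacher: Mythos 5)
The paper's own proof is essentially a citation: it invokes \cite[Corollary 3.9]{AJS} for the fact that $\clS_\Phi$ generates the $t$-structure $(\X,\Y)$ associated to $\Phi$, notes that the notion of ``generates'' used in \cite{AJS} agrees with the present one thanks to Theorem~\ref{thm:description_of_compacts}, and then does the routine $\RHom$-unwinding to arrive at the displayed $\Dge0$-formula. Your proposal instead tries to \emph{re-derive} the content of \cite[Corollary 3.9]{AJS} by directly comparing the filtration $\Phi'$ attached to $\Sigma\clS_\Phi^\perp$ with $\Phi$. Your final unwinding of $\Y = \Sigma\clS_\Phi^\perp$ is correct and matches the paper; the re-derivation, however, has a genuine gap and also leaves the hard half unproved.

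Concretely, for $\Phi(i)\subseteq\Phi'(i)$ you want to pass from $\RHom R{\Sigma^{-i}K_\p}Y\in\Dge0$ to $\RHom R{\Sigma^{-i}R/\p}Y\in\Dge0$ via the truncation triangle $\tau^{\le-1}K_\p\to K_\p\to R/\p\to\Sigma\tau^{\le-1}K_\p$. But after applying $\RHom R{-}Y$, the relevant long exact sequence sandwiches $H^{j}\RHom R{\Sigma^{-i}R/\p}Y$ between $H^{j}\RHom R{\Sigma^{-i}K_\p}Y$ (which vanishes for $j<0$) and $H^{j}\RHom R{\Sigma^{1-i}\tau^{\le-1}K_\p}Y$ — and the latter is completely uncontrolled: $\tau^{\le-1}K_\p$ is not in the preaisle generated by $\clS_\Phi$, and orthogonality of $Y$ to $\clS_\Phi$ says nothing about it. The actual mechanism (used in \cite{AJS}) is support-theoretic: the cohomology modules $H^{-j}(K_\p)$, $j\ge 0$, are supported on $V(\p)$, hence admit finite filtrations by modules $R/\q$ with $\q\supseteq\p$, and because $\Phi(i)$ is specialization closed such $\q$ again lie in $\Phi(i)$ — this is what lets one exchange $K_\p$-orthogonality for $R/\p$-orthogonality (and conversely). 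It is telling that your forward argument never invokes specialization-closedness, which it must. You also candidly flag the reverse containment $\Phi'(i)\subseteq\Phi(i)$ as the hard part and leave the degree bookkeeping for the witness $Y\simeq\Sigma^{i'}E(R/\p)$ unresolved; that is precisely where the Koszul/local duality computations do the real work (compare Lemma~\ref{lem:injective_inside}, which carries out the analogous step for co-$t$-structures). As written, the proposal therefore does not constitute a proof: it outsources the nontrivial half while presenting the other half via an argument that does not close.
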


\begin{proof}
The set $\clS$ generates $(\X,\Y)$ by~\cite[Corollary 3.9]{AJS}. In fact, the term ``generates'' is defined in differently in~\cite[\S1.1]{AJS}, but the two ways of generating a $t$-structure coincide by Theorem~\ref{thm:description_of_compacts}.

Using the standard isomorphisms
\[ \Der\ModR(\Sigma^{-i+j}K_\p,Y) \cong H^{-j}\big(\RHom{R}{\Sigma^{-i}K_\p}{Y}\big), \]
one easily computes that
\begin{align*}
\Y &= \{ Y \mid \Der\ModR(\Sigma^{-i+1} K_\p,Y) = 0 \textrm{ for all } i \in \Z \textrm{ and } \p \in \Phi(i) \} \\
   &= \{ Y \mid \Der\ModR(\Sigma^{-i+j} K_\p,Y) = 0 \textrm{ for all } i \in \Z, j \ge 1 \textrm{ and } \p \in \Phi(i) \} \\
   &= \{ Y \mid \RHom{R}{\Sigma^{-i} K_\p}{Y} \in \Dge0 \textrm{ for all } i \in \Z \textrm{ and } \p \in \Phi(i) \}.
   \qedhere
\end{align*}
\end{proof}

We are in a position to prove our main classification result here. At this point, it is a rather easy corollary of previous results.

\begin{theorem} \label{thm:co-t-structures}
Let $R$ be a commutative noetherian ring. Then there is a bijection between
\begin{ii}
\item filtrations $\Phi\dd \Z \to 2^\Spec{R}$ by supports, and
\item compactly generated co-$t$-structures $(\A,\B)$ on $\Der\ModR$,
\end{ii}
which is given by the assignment $\Phi \mapsto (\A_\Phi,\B_\Phi)$, where
\[
\B_\Phi =
 \{ B \in \Der\ModR \mid \LOtimes{R}{\Sigma^{-i} K_\p}{B} \in \Dle{0}
\textrm{ for all } i \in \Z \textrm{ and } \p \in \Phi(i) \}.
\]
\end{theorem}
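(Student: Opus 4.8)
The plan is to deduce the theorem by chaining three bijections that are already available---the classification of compactly generated $t$-structures over $R$, the passage from compactly generated $t$-structures to left preaisles of compacts, and the duality correspondence of Theorem~\ref{thm:preaisles_tensor}---and then to make the resulting description of $\B_\Phi$ explicit using the Koszul generators of Proposition~\ref{prop:koszul} together with rigidity of compact objects.

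First, $\T = \Der\ModR$ is tensor triangulated and rigidly compactly generated by Proposition~\ref{prop:derived_comm}, and it lies at the base of a stable derivator with all the required coproducts by Proposition~\ref{prop:alg_comp_gen_deriv}, so Theorem~\ref{thm:preaisles_tensor} applies. Combining it with Proposition~\ref{prop:t-structures} gives a chain of bijections: a filtration $\Phi$ determines a compactly generated $t$-structure $(\X,\Y)$, hence the left preaisle $\U = \X \cap \T^c$ via Theorem~\ref{thm:preaisles}(i), hence a compactly generated co-$t$-structure $(\A_\Phi,\B_\Phi)$ with $\B_\Phi = \Sigma\inv(\U^*)^\perp$ by the proof of Theorem~\ref{thm:preaisles_tensor}, the right Hom-orthogonal being taken in all of $\T$ and $\U^* = \{\RHom R Y R \mid Y \in \U\}$. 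Thus $\Phi \mapsto (\A_\Phi,\B_\Phi)$ is a bijection, and only the identification of $\B_\Phi$ remains.

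Next I would pin down $\U$ and dualize it. By Proposition~\ref{prop:koszul} the $t$-structure attached to $\Phi$ has $\Y = \Sigma\clS_\Phi^\perp$ for $\clS_\Phi = \{\Sigma^{-i}K_\p \mid i \in \Z,\ \p \in \Phi(i)\}$, and since $\Phi$ is decreasing, $\Sigma\clS_\Phi \subseteq \clS_\Phi$; consequently $\clS_\Phi \subseteq \X$ and the Hom-orthogonal pair $(\Sigma\X,\Y)$ is generated by $\Sigma\clS_\Phi$, because $(\Sigma\clS_\Phi)^\perp = \Sigma(\clS_\Phi^\perp) = \Y$. Theorem~\ref{thm:description_of_compacts}(ii) then gives $\Sigma\X \cap \T^c = \ext(\Sigma\clS_\Phi)$, whence $\U = \X \cap \T^c = \ext\clS_\Phi$ (automatically a left preaisle). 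Since $\RHom R{-}R$ is a triangle equivalence $(\T^c)^{\op} \to \T^c$ by Lemma~\ref{lem:duality_compacts}, it commutes with the formation of $\ext$, so $\U^* = \ext\{\Sigma^i \RHom R{K_\p}R \mid i \in \Z,\ \p \in \Phi(i)\}$, and therefore $(\U^*)^\perp = \{B \in \T \mid \T(\Sigma^i \RHom R{K_\p}R, B) = 0 \text{ for all } i \in \Z,\ \p \in \Phi(i)\}$ (extensions and summands do not affect a right orthogonal).

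The decisive computation is that $\T(\RHom R X R, B) \cong H^0(\LOtimes R X B)$ for every compact $X$. This follows from the tensor-hom adjunction, from rigidity of $\RHom R X R$ (it is compact, hence rigid, so $\RHom R{\RHom R X R}{B} \cong \LOtimes R{\RHom R{\RHom R X R}{R}}{B}$), from the biduality isomorphism $\RHom R{\RHom R X R}{R} \cong X$ established in the proof of Lemma~\ref{lem:duality_compacts}, and from $\T(R,-) = H^0(-)$. Applying it with $X = \Sigma^{-i}K_\p$ gives $\T(\Sigma^i \RHom R{K_\p}R, B) \cong H^0(\LOtimes R{\Sigma^{-i}K_\p}{B})$, so $(\U^*)^\perp = \{B \mid H^0(\LOtimes R{\Sigma^{-i}K_\p}{B}) = 0 \text{ for all } i \in \Z,\ \p \in \Phi(i)\}$ and hence $B \in \B_\Phi = \Sigma\inv(\U^*)^\perp$ if and only if $H^1(\LOtimes R{\Sigma^{-i}K_\p}{B}) = 0$ for all $i \in \Z$ and $\p \in \Phi(i)$. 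Finally, invoking once more that $\Phi$ is decreasing---so that for a fixed $\p$ the complexes $\Sigma^{-i}K_\p$ with $\p \in \Phi(i)$ differ from one another by suspensions---one checks that the vanishing of $H^1(\LOtimes R{\Sigma^{-i}K_\p}{B})$ for all such $i$ is equivalent to $\LOtimes R{\Sigma^{-i}K_\p}{B} \in \Dle0$ for all such $i$, which is exactly the description of $\B_\Phi$ in the statement. I expect this last bookkeeping step---threading the suspensions introduced by the duality $\U \mapsto \U^*$ and by the shift $\Sigma\inv$ of Theorem~\ref{thm:preaisles_tensor} through the passage from Hom-vanishing against the \emph{dual} Koszul complexes to a $\Dle0$-condition on $\LOtimes R{K_\p}{B}$---to be the only genuinely delicate point; the bijectivity itself is formal once the cited correspondences are composed.
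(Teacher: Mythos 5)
Your proof is correct and follows essentially the same route as the paper's: compose the bijection of Proposition~\ref{prop:t-structures} with that of Theorem~\ref{thm:preaisles_tensor}, express the co-$t$-structure in terms of the dualized Koszul generators $\clS_\Phi^* = \{\Sigma^i\RHom R{K_\p}R\}$, and use rigidity of compacts to replace $\RHom R{K_\p^*}{-}$ by $\LOtimes R{K_\p}{-}$. The only cosmetic difference is that the paper works directly with $\clS_\Phi$ rather than $\U = \ext\clS_\Phi$ (since the right orthogonals coincide) and absorbs the ``bookkeeping'' step into the first equality of its display, where the passage from the single vanishing $\T(\Sigma^{i-1}K_\p^*,B)=0$ to the family $\T(\Sigma^{i-j}K_\p^*,B)=0$, $j\ge 1$, uses the decreasing property of $\Phi$ exactly as you describe.
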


\begin{proof}
The existence of a bijection follows from Theorem~\ref{thm:preaisles_tensor} and Proposition~\ref{prop:t-structures}. In order to understand the bijection, fix a filtration $\Phi$ and consider the set $\clS_\Phi \subseteq \Der\ModR^c$ as in Proposition~\ref{prop:koszul}. Inspecting the proof of Theorem~\ref{thm:preaisles_tensor}, the bijective correspondence is given by assigning to $\Phi$ the co-$t$-structure
\[ (\A_\Phi,\B_\Phi) =  \big( {^\perp((\clS_\Phi^*)^\perp)} , \Sigma\inv(\clS_\Phi^*)^\perp \big). \]
If we denote $\RHom RXR$ by $X^*$ for brevity, a very similar computation as in the proof of Proposition~\ref{prop:koszul} yields
\begin{align*}
\B_\Phi &= \{ B \mid \Der\ModR(\Sigma^{i-j}K_\p^*,B) = 0 \textrm{ for all } i \in \Z, j \ge 1 \textrm{ and } \p \in \Phi(i) \} \\
        &= \{ B \mid \RHom{R}{\Sigma^{i} K_\p^*}{B} \in \Dle0 \textrm{ for all } i \in \Z \textrm{ and } \p \in \Phi(i) \} \\
        &= \{ B \mid \LOtimes{R}{\Sigma^{-i} K_\p}{B} \in \Dle0 \textrm{ for all } i \in \Z \textrm{ and } \p \in \Phi(i) \}.
\end{align*}
The last isomorphism follows from the fact that $\Sigma^{i} K_\p^*$ is compact, hence rigid by Lemma~\ref{lem:duality_compacts}.
\end{proof}

\section{Perfect co-$t$-structures in the commutative noetherian case}
\label{sec:compact}

In various situations where co-$t$-structures have been studied, the emphasis has been put on co-$t$-structures on small triangulated categories, see for instance~\cite{Bond10,JoPa11}. As we have a classification of compactly generated co-$t$-structures on $\Der\ModR$, it is not very difficult at this point to classify co-$t$-structures on the skeletally small triangulated subcategory $\Der\ModR^c$ of compact objects. We recall that compact objects of $\Der\ModR$ are called \emph{perfect complexes} and they are characterized by the property that they are quasi-isomorphic to a bounded complex of finitely generated projective modules~\cite[\S6.5]{KrauseChicago}.

In fact, at least in principle this is an instance of a general approach. A well-known fact in the theory of algebraic triangulated categories says that a small algebraic triangulated category with splitting idempotents is always triangle equivalent to $\T^c$, where $\T$ is a compactly generated algebraic triangulated category (consult~\cite[Theorem 7.5(2)]{KrauseChicago}). If we know what compactly generated co-$t$-structures on $\T$ look like, we can identify the co-$t$-structures on $\T^c$ as follows.

\begin{lemma} \label{lem:restriction}
Let $\T = \bbD(e)$, where $\bbD$ is a stable derivator \st $\bbD(I)$ has small coproducts for each $I$. There is a bijection between
\begin{ii}
\item Co-$t$-structures $(\A_0,\B_0)$ in $\T^c$;
\item Compactly generated co-$t$-structures $(\A,\B)$ in $\T$ \st each $X \in \T^c$ admits an approximation triangle (as in Definition~\ref{def:co-t-structure}(C3))
\[ \Sigma^{-1}A \to X \to B \to A \]
\st $A,B \in \T^c.$
\end{ii}

The bijection is given by the assignments
\begin{eqnarray*}
  (\A_0,\B_0) &\mapsto& \big({}^\perp(\A_0^\perp), \Sigma\inv\A_0^\perp\big)  \\
  (\A, \B) &\mapsto& (\A \cap \T^c,\B \cap \T^c).
\end{eqnarray*}
\end{lemma}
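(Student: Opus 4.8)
The plan is to deduce everything from Theorem~\ref{thm:preaisles}(ii), which already furnishes a bijection between right preaisles $\U\subseteq\T^c$ and compactly generated co-$t$-structures $(\A,\B)$ on $\T$, realized by $\U=\A\cap\T^c$ and $(\A,\B)=\big({}^\perp(\U^\perp),\Sigma\inv\U^\perp\big)$ with the orthogonals taken in $\T$. So it suffices to identify, under this bijection, the right preaisles $\U$ which extend to a co-$t$-structure $(\A_0,\B_0)$ on $\T^c$ as exactly those whose associated co-$t$-structure on $\T$ admits approximation triangles with compact terms, and to check that $\B_0$ is recovered as $\B\cap\T^c$. First I would record the elementary facts about a co-$t$-structure $(\A_0,\B_0)$ on $\T^c$ that are needed: $\A_0$ is a right preaisle (closure under extensions being standard, via Remark~\ref{rem:(co-)t-structure} and the fact that ${}^\perp(-)$ is always closed under extensions and summands), and axioms (C0), (C2), (C3) force $\B_0=(\Sigma\inv\A_0)^\perp=\Sigma\inv(\A_0^\perp)$ with the orthogonal taken in $\T^c$; equivalently $\B_0=\Sigma\inv\big(\A_0^\perp\cap\T^c\big)$ with $\A_0^\perp$ now computed in $\T$.

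\textbf{The composition $(\A_0,\B_0)\mapsto(\A,\B)\mapsto(\A\cap\T^c,\B\cap\T^c)$.} Starting from a co-$t$-structure $(\A_0,\B_0)$ on $\T^c$, apply Theorem~\ref{thm:preaisles}(ii) with $\U=\A_0$ to get a compactly generated co-$t$-structure $(\A,\B)$ on $\T$ with $\A\cap\T^c=\A_0$ (this is also $\ext\A_0$ by Corollary~\ref{cor:description_of_compacts}). Since orthogonality in $\T^c$ is the restriction of orthogonality in $\T$ and $\Sigma\inv\T^c=\T^c$, the formula above gives $\B\cap\T^c=\Sigma\inv\A_0^\perp\cap\T^c=\Sigma\inv(\A_0^\perp\cap\T^c)=\B_0$. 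The compact approximation property then comes for free: for $X\in\T^c$, an approximation triangle $\Sigma\inv A\to X\to B\to A$ of $X$ for the co-$t$-structure on $\T^c$ has $A\in\A_0\subseteq\A$, hence $\Sigma\inv A\in\A$, and $B\in\B_0\subseteq\B$, so it is an approximation triangle of $X$ for $(\A,\B)$ with all terms in $\T^c$.

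\textbf{The composition $(\A,\B)\mapsto(\A_0,\B_0)\mapsto\big({}^\perp(\A_0^\perp),\Sigma\inv\A_0^\perp\big)$.} Given a compactly generated co-$t$-structure $(\A,\B)$ on $\T$ with the compact approximation property, put $\A_0=\A\cap\T^c$ and $\B_0=\B\cap\T^c$. Axioms (C0)--(C2) for $(\A_0,\B_0)$ in $\T^c$ are inherited from $(\A,\B)$ using $\Sigma^{\pm1}\T^c=\T^c$, and (C3) is precisely the compact approximation hypothesis, since an approximation triangle $\Sigma\inv A\to X\to B\to A$ for $(\A,\B)$ with $A,B\in\T^c$ automatically has $A\in\A\cap\T^c=\A_0$ and $B\in\B\cap\T^c=\B_0$. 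Thus $(\A_0,\B_0)$ is a co-$t$-structure on $\T^c$, and since $\A_0=\A\cap\T^c$ is exactly the right preaisle corresponding to $(\A,\B)$ under Theorem~\ref{thm:preaisles}(ii), applying the inverse of that bijection returns $\big({}^\perp(\A_0^\perp),\Sigma\inv\A_0^\perp\big)=(\A,\B)$. Hence the two assignments are mutually inverse.

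\textbf{Main obstacle.} Everything is routine once Theorem~\ref{thm:preaisles} is in hand; the only points that need care are the bookkeeping of in which category each $(-)^\perp$ and each $\Sigma^{\pm1}$ is taken, and the one substantive observation --- the heart of the statement --- that an approximation triangle produced inside $\T^c$ for $(\A_0,\B_0)$ is simultaneously an approximation triangle inside $\T$ for $(\A,\B)$. It is this that makes ``$(\A,\B)$ admits approximation triangles with compact terms'' equivalent to ``$\A\cap\T^c$ is the co-aisle of a co-$t$-structure on $\T^c$'', and thereby cuts the bijection of Theorem~\ref{thm:preaisles}(ii) down to the one claimed.
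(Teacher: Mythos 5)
Your proof is correct and takes essentially the same approach as the paper: both rely on the description of $\A\cap\T^c$ from Theorem~\ref{thm:description_of_compacts} (or equivalently Theorem~\ref{thm:preaisles}(ii) / Corollary~\ref{cor:description_of_compacts}), and both hinge on the same key observation that an approximation triangle produced inside $\T^c$ for $(\A_0,\B_0)$ is automatically one for $(\A,\B)$ inside $\T$. Your version is somewhat more careful than the paper's --- the paper only notes $\B_0\subseteq\B$ and leaves the bookkeeping implicit, whereas you pin down $\B_0=\B\cap\T^c$ explicitly and verify both round trips --- but the underlying argument is the same.
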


\begin{proof}
Clearly, if $(\A,\B)$ is a co-$t$-structure in $\T$ as in (ii), then the restriction to $\T^c$ yields a co-$t$-structure.

Suppose on the other hand that $(\A_0,\B_0)$ is a co-$t$-structure in $\T^c$. Then $(\A,\B)$ constructed from $(\A_0,\B_0)$ as above is a compactly generated co-$t$-structure on $\T$ and $\A_0 = \A \cap \T^c$ by Theorem~\ref{thm:description_of_compacts}. Clearly also $\B_0 \subseteq \B$. It follows that the assumption from (ii) is satisfied since an approximation triangle of $X \in \T^c$ \wrt $(\A_0,\B_0)$ is automatically an approximation triangle \wrt $(\A,\B)$.
\end{proof}

Thus, our task reduces to determining which compactly generated co-$t$-structures satisfy Lemma~\ref{lem:restriction}(ii). To this end, we first establish two lemmas. In the first one, $E(M)$ stands for the injective hull of $M \in \ModR$.

\begin{lemma} \label{lem:injective_inside}
Let $R$ be a commutative noetherian ring and $\Phi\dd \Z \to 2^\Spec{R}$ be a filtration by supports. If $(\A_\Phi,\B_\Phi)$ is the corresponding compactly generated co-$t$-structure as in Theorem~\ref{thm:co-t-structures}, then for any $\p \in \Spec{R}$:
\[ \p \not\in \Phi(i) \quad \implies \quad \Sigma^{i-1} E(R/\p) \in \B_\Phi. \]
\end{lemma}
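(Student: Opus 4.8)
The plan is to verify membership in $\B_\Phi$ directly from the description in Theorem~\ref{thm:co-t-structures}: it suffices to show that for every $j \in \Z$ and every $\q \in \Phi(j)$ one has $\LOtimes{R}{\Sigma^{-j}K_\q}{\Sigma^{i-1}E(R/\p)} \in \Dle 0$. Since $K_\q$ is a bounded complex of finitely generated free $R$-modules, this derived tensor product is computed by the honest Koszul complex $K_\q \otimes_R E(R/\p)$, shifted by $\Sigma^{i-1-j}$. Moreover $E(R/\p)$ is naturally a module over the localization $R_\p$, so all the complexes involved are complexes of $R_\p$-modules, and $K_\q \otimes_R E(R/\p)$ is (up to natural isomorphism) the Koszul complex over $R_\p$ on the images of the chosen generators $x^\q_1,\dots,x^\q_{n_\q}$, which generate the ideal $\q R_\p$. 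I would then distinguish two cases according to whether $\q \subseteq \p$ or not.

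If $\q \not\subseteq \p$, choose $x \in \q \setminus \p$; its image in $R_\p$ is a unit, so $\q R_\p = R_\p$. A Koszul complex over a ring $S$ on a system of generators of the unit ideal is contractible: writing $1 = \sum_k a_k y_k$, wedging with $\sum_k a_k e_k$ in the Koszul DG algebra is a contracting homotopy, and this homotopy survives tensoring with any module. Hence $K_\q \otimes_R E(R/\p)$ is null-homotopic, in particular acyclic, so $\LOtimes{R}{\Sigma^{-j}K_\q}{\Sigma^{i-1}E(R/\p)} = 0$ lies in $\Dle 0$ trivially.

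If instead $\q \subseteq \p$, then $\Phi(j)$ is specialization closed (Definition~\ref{def:supports}) and $\q \in \Phi(j)$ force $\p \in \Phi(j)$; since $\p \notin \Phi(i)$ and $\Phi$ is decreasing, this forces $j < i$. Now a degree count finishes it: $K_\q \in \Dle 0$ (as recorded just before Proposition~\ref{prop:koszul}) gives $\Sigma^{-j}K_\q \in \Dle j$, while $\Sigma^{i-1}E(R/\p) \in \Dle{1-i}$ because $E(R/\p)$ is a module. As $\LOtimes{R}{}{}$ is right $t$-exact in each variable for the standard $t$-structure, i.e. $\Dle a \otimes^{\mathbf L}_R \Dle b \subseteq \Dle{a+b}$, we get $\LOtimes{R}{\Sigma^{-j}K_\q}{\Sigma^{i-1}E(R/\p)} \in \Dle{j+1-i} \subseteq \Dle 0$ since $j < i$. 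Combining the two cases yields $\Sigma^{i-1}E(R/\p) \in \B_\Phi$. The only mildly technical ingredient is the null-homotopy of a Koszul complex on generators of the unit ideal (equivalently, the total vanishing of Koszul homology of $E(R/\p)$ on the generators of $\q$ when $\q \not\subseteq \p$); the genuine point to be careful about is that the indices $j \ge i$, where a naive degree count would fail, are precisely the ones excluded by specialization-closedness in the case $\q \subseteq \p$, while the case $\q \not\subseteq \p$ is annihilated outright by localizing at $\p$.
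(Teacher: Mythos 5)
Your proof is correct and follows essentially the same route as the paper: both arguments reduce to two ingredients, namely that the Koszul complex $K_\q \otimes_R E(R/\p)$ is contractible once a generator of $\q$ lies outside $\p$ (you via the standard contracting homotopy for a Koszul complex on the unit ideal of $R_\p$, the paper via a single generator $x^\q_\ell$ acting invertibly on the $\p$-local module $E(R/\p)$), and a degree count giving membership in $\Dle 0$ when $j<i$. The only difference is organizational: you partition by whether $\q \subseteq \p$ and deduce $j<i$ in the nested case, whereas the paper partitions by whether $j<i$ and deduces $\q \not\subseteq \p$ in the other case; the two case splits are logically interchangeable and lead to the same computations.
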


\begin{proof}
Suppose that $\p \not\in \Phi(i)$ and consider any $j \in \Z$ and $\q \in \Phi(j)$. In view of Theorem~\ref{thm:co-t-structures} we must prove that $\LOtimes{R}{\Sigma^{-j} K_\q}{\Sigma^{i-1} E(R/\p)} \in \Dle0$. This is trivial for $j<i$ since both $K_\q$ and $E(R/\p)$ belong to $\Dle0$ and
\[
\LOtimes{R}{\Sigma^{-j} K_\q}{\Sigma^{i-1} E(R/\p)} \cong
\Sigma^{i-j-1} \LOtimes{R}{K_\q}{E(R/\p)} \in \Dle0.
\]

Let us focus on the case $j \ge i$. Then $\p \not\in \Phi(j)$ and, since $\Phi(j)$ is specialization closed, one of the generators $x^\q_1, \dots, x^\q_{n_{\q}}$ of $\q$ does not belong to $\p$. This generator, say $x^\q_\ell$, acts as an isomorphism on $E(R/\p)$, so that $K_\q \otimes_R E(R/\p)$ is a contractible complex. In other words, 
\[ \LOtimes{R}{\Sigma^{-j} K_\q}{\Sigma^{i-1} E(R/\p)} = 0 \in \Dle0 \]
in this case.
\end{proof}

The other lemma is related to connected components of $\Spec{R}$. Recall that a commutative noetherian ring is called \emph{connected} if the Zariski spectrum $\Spec{R}$ is a connected topological space. This is equivalent to saying that $R$ has no idempotent elements except for $0$ and $1$; see~\cite[Exercise 2.25]{E}.

\begin{lemma} \label{lem:sp-connected}
Suppose that $R$ is a commutative noetherian ring and $\Spec{R}$ can be written as a finite disjoint union $\Spec{R} = V_1 \cup \dots \cup V_n$, where $V_n$ are specialization closed. Then all the $V_i$ are Zariski closed and they are unions of connected components of $\Spec{R}$.
\end{lemma}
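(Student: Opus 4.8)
The plan is to show that each $V_i$ is in fact \emph{clopen} in $\Spec{R}$. The conclusion then follows immediately, because a clopen subset of any space is a union of connected components: a connected component $C$ meets a clopen set $V_i$ in a relatively clopen subset of $C$, hence in $\emptyset$ or all of $C$; and any clopen set is in particular Zariski closed.

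First I would observe that each $V_i$ is stable under generization as well. Its complement $\Spec{R}\setminus V_i=\bigcup_{j\neq i}V_j$ is a \emph{finite} union of specialization closed sets, hence specialization closed; therefore $V_i$, being the complement of a specialization closed set, is generization closed (if $\q\notin V_i$ and $\p\subseteq\q$, then $\p\notin V_i$, since otherwise $\q\in V_i$). Thus every $V_i$ is stable under both specialization and generization.

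Next I would invoke the noetherian hypothesis. Since $R$ is noetherian, $\Spec{R}$ is a noetherian topological space and so has only finitely many irreducible components, say $Z_1,\dots,Z_m$, with generic points $\eta_1,\dots,\eta_m$. I claim each $V_i$ is a union of some of the $Z_k$: fix $i$ and $k$; if $\eta_k\in V_i$ then $Z_k=\overline{\{\eta_k\}}\subseteq V_i$ since $V_i$ is specialization closed, while if $\eta_k\notin V_i$ then $V_i\cap Z_k=\emptyset$, because every $z\in Z_k=\overline{\{\eta_k\}}$ has $\eta_k$ as a generization, so $z\in V_i$ would force $\eta_k\in V_i$. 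Hence $V_i=\bigcup_{k\in S_i}Z_k$ for some $S_i\subseteq\{1,\dots,m\}$, which is a finite union of closed sets, hence Zariski closed; and then $\Spec{R}\setminus V_i=\bigcup_{j\neq i}V_j$ is likewise a finite union of closed sets, so $V_i$ is open as well, i.e. clopen.

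I do not expect a serious obstacle. The one point deserving care is that the noetherian hypothesis (finitely many irreducible components) and the finiteness of the partition $\Spec{R}=V_1\cup\dots\cup V_n$ are exactly what turn ``specialization closed $+$ generization closed'' into ``clopen''; without them this implication would fail, so the argument must be phrased to make that dependence explicit.
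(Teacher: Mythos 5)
Your argument is correct, and at bottom it runs on the same engine as the paper's proof: the noetherian hypothesis supplies finitely many minimal primes (equivalently, irreducible components), and specialization-closedness together with the disjoint finite partition forces each $V_i$ to pick up whole components. The bookkeeping differs a bit. The paper lets $P_0$ be the set of minimal primes, notes $V(P_0\cap V_i)\subseteq V_i$ since $V_i$ is specialization closed, invokes $\bigcup_i V(P_0\cap V_i)=V(P_0)=\Spec R$, and then uses disjointness of the $V_i$ to squeeze out the equality $V_i=V(P_0\cap V_i)$, which is closed. You instead make the dual observation explicit — each $V_i$ is also \emph{generization} closed, because its complement $\bigcup_{j\neq i}V_j$ is specialization closed — and then argue component by component that each $Z_k$ lies entirely inside or entirely outside $V_i$. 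These are two phrasings of the same dichotomy; yours has the virtue of making very transparent where each hypothesis (noetherian, finite partition, specialization-closed) enters, while the paper's version is terser. One trivial remark: the union $\bigcup_{j\neq i}V_j$ being specialization closed does not actually need finiteness (arbitrary unions of specialization-closed sets are specialization closed), though finiteness is of course essential for the clopen conclusion at the end, as you correctly stress.
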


\begin{proof}
Denote by $P_0$ the set of minimal prime ideals of $R$---this is a finite set by~\cite[Theorem 3.1(a)]{E}. Denoting by $V(X)$ the Zariski closure of $X \subseteq \Spec{R}$, we clearly have the inclusions
\[ V(P_0 \cap V_i) \subseteq V_i. \]
On the other hand, we have $\bigcup_{i=1}^n V(P_0 \cap V_i) = V(P_0) = \Spec{R}$ by~\cite[Corollary 10.3]{E}. It follows that $V_i = V(P_0 \cap V_i)$ is Zariski closed for each $i$.
\end{proof}

Before describing the classification of co-$t$-structures in the category of perfect complexes for $R$ commutative noetherian, we introduce some notation for the co-$t$-structures which are certainly known to exist.

\begin{notation} \label{not:known_co-t-str}
Let $R$ be a ring, $n$ be an integer, and put $\T = \Der\ModR$. Then there is a co-$t$-structure $(\K^{\ge n},\K^{\le n})$ on $\T^c$, where $\K^{\ge n}$ and $\K^{\le n}$ are the classes of perfect complexes which are isomorphic in $\T$ to bounded complexes of finitely generated projective modules concentrated in degrees $\ge n$ and $\le n$, respectively. The approximation triangles for these co-$t$-structures come from brutal truncations of complexes, see~\cite[\S1.1]{Bond10}.
\end{notation}

Now we can settle the classification of co-$t$-structures in the homotopy category of perfect complexes over a commutative noetherian ring. It turns out that these categories are very rigid it that there are no other co-$t$-structures except for those from Notation~\ref{not:known_co-t-str} and their obvious modifications if $\Spec{R}$ is disconnected.

\begin{theorem} \label{thm:perfect_co-t-str}
Let $R$ be a commutative noetherian ring and $\T = \Der\ModR$. Then there is a bijection between
\begin{ii}
\item filtrations $\Phi\dd \Z \to 2^\Spec{R}$ by supports \st each $\Phi(i)$ is a union of Zariski connected components of $\Spec{R}$, and
\item co-$t$-structures $(\A_0,\B_0)$ on $\T^c$.
\end{ii}
\end{theorem}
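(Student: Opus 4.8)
Write $\T^c=\Der\ModR^c$ for the category of perfect complexes. The bijection is obtained by composing two earlier ones. By Theorem~\ref{thm:co-t-structures}, filtrations $\Phi$ by supports correspond bijectively to compactly generated co-$t$-structures $(\A,\B)$ on $\T=\Der\ModR$; by Lemma~\ref{lem:restriction}, co-$t$-structures $(\A_0,\B_0)$ on $\T^c$ correspond bijectively to those compactly generated co-$t$-structures on $\T$ for which every perfect complex admits an approximation triangle with perfect terms. Composing, the assignments $\Phi\mapsto(\A_\Phi\cap\T^c,\B_\Phi\cap\T^c)$ and $(\A_0,\B_0)\mapsto\big({}^\perp(\A_0^\perp),\Sigma\inv\A_0^\perp\big)$ (followed by Theorem~\ref{thm:co-t-structures}) are mutually inverse bijections between co-$t$-structures on $\T^c$ and those $\Phi$ for which $(\A_\Phi,\B_\Phi)$ restricts to $\T^c$. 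Hence everything reduces to showing: $(\A_\Phi,\B_\Phi)$ restricts to $\T^c$ if and only if each $\Phi(i)$ is a union of Zariski connected components of $\Spec R$. Since $R$ is noetherian, $\Spec R$ has only finitely many connected components, so $R\cong R_1\times\dots\times R_m$ with each $\Spec R_k$ connected; correspondingly $\T\cong\prod_k\Der{\rmod{R_k}}$ and $\T^c\cong\prod_k\Der{\rmod{R_k}}^c$. Because the classes occurring in co-$t$-structures are closed under summands, a co-$t$-structure on $\T$ (resp.\ on $\T^c$) is precisely a product of co-$t$-structures on the factors, and both Theorem~\ref{thm:co-t-structures} and Lemma~\ref{lem:restriction} respect this decomposition.

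\emph{Sufficiency.} Suppose each $\Phi(i)$ is a union of connected components. Under the decomposition $\Phi$ becomes a tuple of filtrations $\Phi_k$ of $\Spec R_k$ with each $\Phi_k(i)\in\{\emptyset,\Spec R_k\}$, so by monotonicity there is a threshold $n_k\in\Z\cup\{\pm\infty\}$ with $\Phi_k(i)=\Spec R_k$ exactly for $i\le n_k$. A direct computation with the tensor description of $\B_{\Phi_k}$ from Theorem~\ref{thm:co-t-structures}, testing perfect complexes by their minimal representatives, identifies $(\A_{\Phi_k}\cap\Der{\rmod{R_k}}^c,\,\B_{\Phi_k}\cap\Der{\rmod{R_k}}^c)$ with the trivial co-$t$-structure $(\Der{\rmod{R_k}}^c,0)$ when $n_k=+\infty$, with $(0,\Der{\rmod{R_k}}^c)$ when $n_k=-\infty$, and with one of the co-$t$-structures $(\K^{\ge n},\K^{\le n})$ of Notation~\ref{not:known_co-t-str} when $n_k\in\Z$. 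In every one of these cases perfect approximation triangles exist --- trivially, or by brutal truncation of a bounded complex of finitely generated projectives --- and hence so do they for every object of $\T^c$; thus $(\A_\Phi,\B_\Phi)$ restricts to $\T^c$. As distinct filtrations give distinct co-$t$-structures, this already yields a well-defined injection from the filtrations with component-union values into the co-$t$-structures on $\T^c$.

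\emph{Necessity.} Suppose $(\A_\Phi,\B_\Phi)$ restricts to $\T^c$ and fix $i_0\in\Z$. As $\Phi(i_0)$ is specialization closed, Lemma~\ref{lem:sp-connected} applied to the disjoint union $\Spec R=\Phi(i_0)\sqcup(\Spec R\setminus\Phi(i_0))$ reduces the task to showing $\Spec R\setminus\Phi(i_0)$ is specialization closed as well. If it were not, there would be primes $\p\subsetneq\q$ with $\q\in\Phi(i_0)$ and $\p\notin\Phi(i_0)$; replacing $\p$ by a minimal prime contained in it --- still outside $\Phi(i_0)$ since $\Phi(i_0)$ is specialization closed --- we may assume $\p$ is a minimal prime of $R$, so $R_\p$ is Artinian local. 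Because $(\A_\Phi,\B_\Phi)$ restricts, it is generated by the set of compact objects $\A_0:=\A_\Phi\cap\T^c$, whence $\B_\Phi=\A_0^\perp$. By Lemma~\ref{lem:injective_inside}, $\Sigma^j E(R/\p)\in\B_\Phi$ for every $j\ge i_0-1$, and --- since $E(R/\p)$ is injective --- $\Hom{\T}{A}{\Sigma^j E(R/\p)}\cong\Hom{R}{H^{-j}(A)}{E(R/\p)}$, which vanishes if and only if $H^{-j}(A)_\p=0$; combined with $\B_\Phi=\A_0^\perp$ this forces $H^m(A)_\p=0$ for every $A\in\A_0$ and every $m\le 1-i_0$. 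It remains to contradict this by producing an object of $\A_0$ with nontrivial cohomology at $\p$ in some degree $\le 1-i_0$. Now $\A_0=\ext\clS_\Phi^*$ is generated by (shifted) $R$-duals $K_{\q'}^*$ of Koszul complexes with $\q'$ running over $\bigcup_l\Phi(l)$; those with $\q'\not\subseteq\p$ localize to zero at $\p$, so the relevant ones are those with $\q'\subseteq\p$. If $\p$ itself lies in some $\Phi(l)$, then $\Sigma^l K_\p^*\in\A_0$ localizes at $\p$ to a shift of the Koszul complex over the Artinian local ring $R_\p$, whose bottom cohomology is the socle $(0:_{R_\p}\p R_\p)$ and is nonzero; a degree count then provides the desired contradiction.

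The case where $\p$ lies in no $\Phi(l)$ is the main obstacle: then Lemma~\ref{lem:injective_inside} yields $\Sigma^j E(R/\p)\in\B_\Phi$ for all $j\in\Z$, so $A_\p=0$ for every $A\in\A_0$ and the previous object no longer exists. Here one argues instead from the completeness of the co-$t$-structure on $\T^c$ directly: one takes a perfect approximation triangle $\Sigma\inv A\to X\to B\to A$ of a suitably negatively shifted copy of $R$, localizes at $\p$ (killing $A$), and uses the resulting constraint on $B$ together with the tensor conditions defining $\B_\Phi$ at the primes of $\Phi$ that specialize into $\q$ --- again via minimality of $\p$ and a local Koszul computation --- to reach a contradiction. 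Granting this last point, $\Spec R\setminus\Phi(i_0)$ is specialization closed for all $i_0$, hence each $\Phi(i_0)$ is a union of Zariski connected components, which finishes the proof.
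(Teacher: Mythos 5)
Your ``if'' direction coincides with the paper's: decompose $R$ as a finite product of connected rings and observe that the induced co-$t$-structures on each factor are $(0,\cdot)$, $(\cdot,0)$, or a shift of the canonical one, all of which have perfect approximation triangles.

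The ``only if'' direction, however, has a genuine gap, and the route you take is quite different from the paper's. You try to localize at a minimal prime $\p\notin\Phi(i_0)$ and derive a contradiction from Koszul duals lying in $\A_0$. There are two problems. First, the degree count as stated does not close: from Lemma~\ref{lem:injective_inside} you get $H^m(A)_\p=0$ for $m\le 1-i_0$, but $(\Sigma^l K_\p^*)_\p$ has cohomology only in degrees $\ge -l$, and since $\Phi$ is decreasing and $\p\notin\Phi(i_0)$, any $l$ with $\p\in\Phi(l)$ satisfies $l\le i_0-1$, hence $-l\ge 1-i_0$. The two ranges barely touch (only if $l=i_0-1$), so this does not contradict your vanishing statement unless you sharpen the vanishing to $m\le -l^\ast$ with $l^\ast=\max\{l:\p\in\Phi(l)\}$ \emph{and} explicitly take $l=l^\ast$ --- neither of which you do. Second, and more seriously, you flag yourself that the case $\p\notin\bigcup_l\Phi(l)$ is ``the main obstacle'' and then merely outline what one might do, closing with ``Granting this last point.'' So the proof is incomplete precisely where the real work happens.

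For comparison, the paper avoids both issues by attacking the problem head-on via the perfect approximation triangle $\Sigma^{-1}A\to\Sigma^{i-1}R\overset{f}\to B\to A$ with $A,B\in\T^c$, whose existence is the content of the restriction hypothesis. One then shows $\Supp{H^{1-i}(B)}=\Spec{R}\setminus\Phi(i)$: the inclusion $\supseteq$ uses Lemma~\ref{lem:injective_inside} and the factorization of $\Sigma^{i-1}R\to\Sigma^{i-1}E(R/\p)$ through $f$ (since $f$ is a left $\B$-approximation); the inclusion $\subseteq$ uses that $B\otimes_R R_\p$ is perfect over $R_\p$ and still lies in $\B_\Phi$, then Nakayama on the highest nonvanishing cohomology degree, exploiting the tensor description of $\B_\Phi$. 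Since $H^{1-i}(B)$ is finitely generated, its support is Zariski closed, so $\Spec{R}$ decomposes as a disjoint union of two specialization closed sets and Lemma~\ref{lem:sp-connected} applies directly. This argument makes no case distinction on whether $\p$ ever enters the filtration, which is exactly what blocked you.
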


Before proving the theorem, we will give a more transparent interpretation of the result in the case when $R$ is connected.

\begin{corollary} \label{cor:perfect_co-t-str}
Let $R$ be a connected commutative noetherian ring and $\T = \Der\ModR$. Then the co-$t$-structures on $\T^c$ are precisely
\begin{ii}
\item the trivial ones: $(\T^c,0)$, $(0,\T^c)$;
\item (de)suspensions of the canonical one: $(\K^{\ge n},\K^{\le n})$, $n \in \Z$.
\end{ii}
\end{corollary}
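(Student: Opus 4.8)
The plan is to read the corollary off Theorem~\ref{thm:perfect_co-t-str}. Since $R$ is connected, $\Spec R$ is a connected space, so the only subsets of $\Spec R$ that are unions of Zariski connected components are $\emptyset$ and $\Spec R$ itself. Hence a filtration $\Phi\dd\Z\to 2^{\Spec R}$ by supports all of whose terms are unions of connected components is precisely a non-increasing map from $\Z$ to the two-element chain $\{\emptyset\subsetneq\Spec R\}$; such a map is either constantly $\emptyset$, constantly $\Spec R$, or, for a unique integer $n$, the map $\Phi_n$ given by $\Phi_n(i)=\Spec R$ exactly when $i\le n$. By Theorem~\ref{thm:perfect_co-t-str} these three families of filtrations are in bijection with all co-$t$-structures on $\T^c$, so it only remains to identify the co-$t$-structure $(\A_\Phi\cap\T^c,\,\B_\Phi\cap\T^c)$ attached to each, for which I would use the explicit formula for $\B_\Phi$ from Theorem~\ref{thm:co-t-structures}.

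For $\Phi\equiv\emptyset$ the defining conditions on $\B_\Phi$ are vacuous, so $\B_\Phi=\Der\ModR$, hence $\B_\Phi\cap\T^c=\T^c$ and the attached co-$t$-structure is the trivial one $(0,\T^c)$. For $\Phi\equiv\Spec R$, the condition ``$\LOtimes R{\Sigma^{-i}K_\p}{B}\in\Dle0$ for all $i\in\Z$ and all $\p$'' forces $\LOtimes R{K_\p}{B}$ into $\bigcap_{i\in\Z}\Dle{-i}$, which consists of the zero object alone; so $\LOtimes R{K_\p}{B}=0$ for every $\p$. For a perfect complex $B$ this gives $B=0$: if $B_\p\ne0$ for some $\p$, then the Koszul complex on a generating set of the maximal ideal of $R_\p$ tensored with $B_\p$ has nonzero top cohomology by Nakayama's lemma, contradicting the vanishing. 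So $\B_\Phi\cap\T^c=0$ and the attached co-$t$-structure is the other trivial one $(\T^c,0)$.

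The remaining case $\Phi_n$ is the only one where I expect any real work. Since $\Phi_n$ is non-increasing, the strongest of the conditions defining $\B_{\Phi_n}$ is the one at $i=n$, so, using that $\Sigma^{-i}Y\in\Dle0$ holds if and only if $Y\in\Dle{-i}$, one gets $\B_{\Phi_n}=\{\,B\mid\LOtimes R{K_\p}{B}\in\Dle{-n}\ \text{for all}\ \p\,\}$. I would then prove that $\B_{\Phi_n}\cap\T^c=\Dle{-n}\cap\T^c=\K^{\le -n}$. The inclusion ``$\supseteq$'' is immediate because $K_\p\in\Dle0$, so tensoring with $K_\p$ keeps $\Dle{-n}$ inside $\Dle{-n}$, while $\Dle{-n}\cap\T^c=\K^{\le -n}$ is the standard observation that a perfect complex with cohomology concentrated in degrees $\le -n$ can be represented by a bounded complex of finitely generated projectives in those degrees. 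For ``$\subseteq$'', if a perfect $B$ is not in $\Dle{-n}$, let $k$ be the top degree with $H^k(B)\ne0$ (so $k>-n$) and choose $\p$ with $H^k(B)_\p\ne0$; then the top cohomology $H^k(\LOtimes R{K_\p}{B})\cong H^0(K_\p)\otimes_R H^k(B)\cong(R/\p)\otimes_R H^k(B)$ is nonzero after localization at $\p$ by Nakayama, so $\LOtimes R{K_\p}{B}\notin\Dle{-n}$. Hence $\B_{\Phi_n}\cap\T^c=\K^{\le -n}$; since a co-$t$-structure $(\A,\B)$ is determined by $\B$ (one has $\A=\Sigma({}^\perp\B)$), the co-$t$-structure attached to $\Phi_n$ is $(\K^{\ge -n},\K^{\le -n})$. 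As $n$ runs over $\Z$ these are exactly the (de)suspensions of the canonical co-$t$-structure, so together with the two trivial ones we have listed every co-$t$-structure on $\T^c$, which is the statement of the corollary.
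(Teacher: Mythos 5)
Your proposal is correct and takes essentially the same route as the paper: the corollary is read directly off Theorem~\ref{thm:perfect_co-t-str} by observing that a connected $\Spec R$ has only $\emptyset$ and $\Spec R$ as unions of connected components, so a decreasing filtration by such subsets is constant or a step function. The paper dispatches the remaining identification of $(\A_\Phi\cap\T^c,\B_\Phi\cap\T^c)$ inside the proof of Theorem~\ref{thm:perfect_co-t-str} as ``an easy direct computation''; you carry out that computation explicitly (via $\B_{\Phi_n}\cap\T^c=\K^{\le -n}$ and the Nakayama argument, which mirrors the one already used in the paper's proof of the theorem), and it checks out.
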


\begin{proof}[Proof of Theorem~\ref{thm:perfect_co-t-str}]
Let $\Phi\dd \Z \to 2^\Spec{R}$ be a filtration by supports and $(\A_\Phi,\B_\Phi)$ the corresponding compactly generated co-$t$-structure from Theorem~\ref{thm:co-t-structures}. Our task in view of Lemma~\ref{lem:restriction} is to prove that compact objects have compact approximations \iff each $\Phi(i)$ is a union of connected components of $\Spec{R}$.

For the 'if' part, we can write $R$ uniquely as a ring product $R_1 \times \dots \times R_n$ so that each $R_i$ is connected. Denoting $\T_i = \Der{\rmod{R_i}}$, we get a canonical triangle equivalence
\[ \T^c \overset\cong\la (\T_1)^c \times \cdots \times (\T_n)^c. \]
Identifying the $(\T_i)^c$ with full subcategories of $\T^c$, consider the classes $\A_i = \A_\Phi \cap \T_i^c$ and $\B_i = \B_\Phi \cap \T_i^c$. Given our assumption on $\Phi$, it is a matter of an easy direct computation that $\A_i$ is one of $0$, $(\T_i)^c$ or $\K_i^{\ge n}$ for $n \in \Z$ (here we use the obvious modification of Notation~\ref{not:known_co-t-str}). In any case, $(\A_i,\B_i)$ is a co-$t$-structure on $(\T_i)^c$ and consequently $(\A_\Phi \cap \T^c, \B_\Phi \cap \T^c)$ is a co-$t$-structure in $\T^c$. This also explains Corollary~\ref{cor:perfect_co-t-str}.

Suppose on the other hand that $(\A_\Phi,\B_\Phi)$ satisfies the condition of Lemma~\ref{lem:restriction}(ii). We must prove that $\Phi(i)$ is a union of connected components of $\Spec{R}$ for any fixed $i \in \Z$. To this end, consider an approximation triangle
\[ \Sigma^{-1}A \la \Sigma^{i-1}R \overset{f}\la B \la A \]
with $A \in \A_\Phi \cap \T^c$ and $B \in \B_\Phi \cap \T^c$.

We claim that $\Supp{H^{1-i}(B)} = \Spec{R} \setminus \Phi(i)$, where $\Supp$ is the usual support of an $R$-module, \cite[p. 67]{E}. To see that, suppose first that $\p \not\in \Phi(i)$. Then the obvious morphism
\[ \Sigma^{i-1} R \la \Sigma^{i-1} R/\p \la \Sigma^{i-1} E(R/\p) \]
factors through $f$ by Lemma~\ref{lem:injective_inside}. Passing to the $(1-i)$-th cohomologies, we have a non-zero composition $R \to H^{1-i}(B) \to E(R/\p)$, showing that $\p \in \Supp{H^{1-i}(B)}$.

Conversely, we must show that $H^{1-i}(B)_\p = 0$ for each $\p \in \Phi(i)$. We shall prove even more: $H^{j}(B)_\p = 0$ for each $j \ge 1-i$.  By way of contradiction suppose that there is $\p \in \Phi(i)$ and $j \ge 1-i$ \st $H^j(B)_\p \cong H^j(B') \ne 0$, where $B' = B \otimes_R R_\p$. Note that $B'$ is a perfect complex over $R_\p$ and also, using the description of $\B_\Phi$ from Theorem~\ref{thm:co-t-structures}, that $B' \in \B_\Phi$. Hence we can assume that $j$ is maximal possible for given $\p$ and that $B'$ is a bounded complex of finitely generated projective $R_\p$-modules concentrated in degrees $\le j$. Since $\p \in \Phi(i)$, we have
\[ \LOtimes{R}{\Sigma^{-i} K_\p}{B'} \in \Dle0. \]

An easy computation using the fact that $K_\p$ is concentrated in degrees $\le 0$ and $B'$ in degrees $\le j$ gives
\[
0 = H^{j}(\LOtimes{R}{K_\p}{B'}) \cong H^0(K_\p) \otimes_R H^j(B')
\cong R/{\p} \otimes_R H^j(B') \cong H^j(B')/\p H^j(B').
\]
As $H^j(B')$ is a finitely generated $R_{\p}$-module, the Nakayama lemma implies $H^j(B') \cong H^j(B)_{\p} = 0$, in contradiction to our choice of $\p$ and $j$. This finishes the proof of the claim.

What we obtained is an expression of $\Spec{R}$ as the disjoint union
\[ \Spec{R} = \Phi(i) \cup \Supp{H^{1-i}(B)}, \]
where both $\Phi(i)$ and $\Supp{H^{1-i}(B)}$ are specialization closed (see~\cite[Corollary 2.7]{E} for $\Supp{H^{1-i}(B)}$). Thus, $\Phi(i)$ is a union of components of $\Spec{R}$ by Lemma~\ref{lem:sp-connected}.
\end{proof}

We conclude with two corollaries. For simplicity, we state both of them only for connected commutative noetherian rings. Their generalizations to non-connected rings are obvious.

The first one allows us to classify silting objects in $\Der\ModR$. The concept has been introduced in~\cite{KV88} and studied in detail in~\cite{AI12} recently. Recall from~\cite[Definition 4.1]{AI12} that an object $S \in \T$, where $\T$ is a triangulated category with small coproducts, is called \emph{silting} if $\T(S,\Sigma^i S) = 0$ for all $i>0$ and $\clS = \{S[i] \mid i \in \Z \}$ is a set of compact generators of $\T$ in the sense of Definition~\ref{def:comp_gen_triang}. As an easy consequence of results in~\cite{Bond10,MSSS11} we obtain:

\begin{corollary}
Let $R$ be connected commutative noetherian ring. Then $S \in \Der\ModR$ is silting \iff $S \cong \Sigma^n P$ for some $n \in \Z$ and a projective generator $P \in \rfmod{R}$.
\end{corollary}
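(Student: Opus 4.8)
One implication is immediate. If $n\in\Z$ and $P\in\rfmod R$ is a projective generator, then $\Der\ModR(\Sigma^nP,\Sigma^{n+i}P)=\Ext{i}{R}{P}{P}=0$ for all $i>0$, and the shifts $\{\Sigma^{n+i}P\mid i\in\Z\}$ form a set of compact generators of $\Der\ModR$, since each $\Sigma^{n+i}P$ is a perfect complex and $R\in\add P$ while the shifts of $R$ generate $\Der\ModR$. Hence $\Sigma^nP$ is silting. The content is the converse, and the plan is to reduce it to Corollary~\ref{cor:perfect_co-t-str} through the bijection between silting subcategories and bounded co-$t$-structures of~\cite{Bond10,MSSS11}.

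So suppose $S\in\Der\ModR$ is silting. By the definition from~\cite{AI12}, $\{S[i]\mid i\in\Z\}$ is a set of compact objects, so $S\in\T^c$ is a perfect complex; since these objects moreover generate $\T=\Der\ModR$ and are compact, the thick subcategory of $\T^c$ they span is all of $\T^c$, that is, $\T^c=\mathrm{thick}(S)$. Together with $\T^c(S,\Sigma^iS)=\T(S,\Sigma^iS)=0$ for $i>0$, this says precisely that $S$ is a silting object of the small triangulated category $\T^c$ in the classical sense of~\cite{KV88}. Therefore $\add S$ is a silting subcategory of $\T^c$, and by~\cite{Bond10,MSSS11} it coincides with the co-heart of a bounded co-$t$-structure $(\A_0,\B_0)$ on $\T^c$.

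Next I would apply Corollary~\ref{cor:perfect_co-t-str}: the co-$t$-structures on $\T^c$ are the two trivial ones and the (de)suspensions $(\K^{\ge m},\K^{\le m})$, $m\in\Z$, of the canonical one (Notation~\ref{not:known_co-t-str}). Since $R\neq0$ we have $\T^c\neq0$, so the trivial co-$t$-structures $(\T^c,0)$ and $(0,\T^c)$ are not bounded and cannot arise from a silting subcategory; hence $(\A_0,\B_0)=(\K^{\ge m},\K^{\le m})$ for some $m$. An inspection of the brutal truncations defining these co-$t$-structures shows that the co-heart $\K^{\ge m}\cap\K^{\le m}$ is exactly the class of perfect complexes isomorphic in $\T$ to a finitely generated projective module placed in degree $m$, i.e.\ $\Sigma^{-m}(\add R)$. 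Thus $\add S=\Sigma^{-m}(\add R)$, so $\add(\Sigma^mS)=\add R$; since $R$ is connected this forces $\Sigma^mS$ to be a finitely generated projective module $P$ with $R\in\add P$, that is, a projective generator, and $S\cong\Sigma^{-m}P$, as claimed.

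The argument is essentially formal, and the points that need care are the bookkeeping in the two reductions: checking that the notion of a silting object of $\T$ used here does restrict to the classical notion of a silting object of $\T^c$ (so that the bijection of~\cite{Bond10,MSSS11} applies to $\add S$), that boundedness of the co-$t$-structures attached to silting subcategories rules out the two trivial co-$t$-structures, and that the co-heart of $(\K^{\ge m},\K^{\le m})$ is the shifted category of finitely generated projectives, with the shift matched correctly to the statement.
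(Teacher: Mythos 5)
Your proposal is correct and follows the same route as the paper: pass to the co-$t$-structure on $\T^c$ whose co-heart is $\add S$ via the bijection of \cite{Bond10,MSSS11}, and then invoke Corollary~\ref{cor:perfect_co-t-str}. The paper's proof is terser and leaves the final bookkeeping (ruling out the trivial co-$t$-structures, identifying the co-heart of $(\K^{\ge m},\K^{\le m})$ with $\Sigma^{-m}(\add R)$) to the reader, whereas you spell it out; one small slip is that you credit connectedness of $R$ for the last step $\add(\Sigma^m S)=\add R \Rightarrow \Sigma^m S\cong P$ projective generator, but connectedness is in fact only used to invoke Corollary~\ref{cor:perfect_co-t-str}, and the last step is automatic once $\add(\Sigma^m S)=\add R$.
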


\begin{proof}
Clearly all objects of the form $\Sigma^n P$ are silting (even tilting in the sense of~\cite{Rick89}). Conversely suppose that $S$ is silting. Then~\cite[Theorem 4.3.2]{Bond10} or~\cite[Theorem 5.5]{MSSS11} implies that $\add S = \A_0 \cap \B_0$ for some co-$t$-structure on $(\A_0,\B_0)$ on $\Der\ModR^c$. The conclusion follows from the classification in Corollary~\ref{cor:perfect_co-t-str}
\end{proof}

The second one says that non-trivial adjacent pairs of $t$-structures and co-$t$-structures in $\Der\ModR^c$ exist only under rather restrictive conditions. This is in fact closely related to the connections between silting objects and $t$-structures studied in~\cite{AI12,HKM02,KV88}. We refer to~\cite{E,MATSUMURA} for missing definitions from commutative algebra.

\begin{corollary} \label{cor:perfect_adjacent}
Let $R$ be a connected commutative noetherian ring of finite Krull dimension, let $\T = \Der\ModR$ and let $(\A_0,\B_0)$ be a non-trivial co-$t$-structure on $\T^c$ (i.e.\ $\A_0 \ne 0 \ne \B_0$). Then $(\A_0,\B_0)$ admits a left adjacent $t$-structure \iff it admits a right adjacent $t$-structure \iff $R$ is regular.
\end{corollary}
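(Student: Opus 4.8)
My plan is to reduce to the single ``brutal truncation'' co-$t$-structure and then run a homological-dimension argument. First I would invoke Corollary~\ref{cor:perfect_co-t-str}: a non-trivial co-$t$-structure on $\T^c$ is exactly $(\K^{\ge n},\K^{\le n})$ for a unique $n\in\Z$, and applying $\Sigma^{\mp n}$ I may assume $n=0$. Next I would use the self-duality $(-)^{\vee}=\RHom_R(-,R)$, which gives a triangle equivalence $(\T^c)^{\mathrm{op}}\xrightarrow{\ \sim\ }\T^c$ by Lemma~\ref{lem:duality_compacts}; since dualizing a bounded complex of finitely generated projectives concentrated in degrees $\ge m$ produces one concentrated in degrees $\le -m$, it interchanges $\K^{\ge m}\leftrightarrow\K^{\le -m}$, hence carries co-$t$-structures to co-$t$-structures and $t$-structures to $t$-structures, and swaps left- with right-adjacency. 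As ``$R$ is regular'' is a self-dual condition, it suffices to prove that $(\K^{\ge 0},\K^{\le 0})$ admits a left-adjacent $t$-structure on $\T^c$ \iff $R$ is regular.

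A left-adjacent $t$-structure to $(\K^{\ge 0},\K^{\le 0})$ is a $t$-structure $(\X_0,\Y_0)$ on $\T^c$ with $\Y_0=\K^{\ge 0}$, and the aisle is recovered from the coaisle, so $\X_0={}^{\perp}(\Sigma^{-1}\Y_0)={}^{\perp}\K^{\ge 1}$. I would first check the easy closure facts: ${}^{\perp}\K^{\ge 1}$ is a left preaisle and $\Hom_{\T^c}({}^{\perp}\K^{\ge 1},\K^{\ge 1})=0$, so that the only axiom that can fail is (T3). Thus everything comes down to deciding for which $R$ every perfect complex $Z$ fits in a triangle $X\to Z\to W\to\Sigma X$ with $X\in{}^{\perp}\K^{\ge 1}$ and $W\in\K^{\ge 1}$.

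For the ``if'' direction, regularity together with $\Kdim R<\infty$ gives $\mathrm{gl.dim}\,R=\Kdim R<\infty$, so every bounded complex with finitely generated cohomology is perfect and every perfect complex admits a finite resolution by finitely generated projectives of controlled length; working with a minimal such resolution locally and glueing over $\Spec R$, the part living in degrees $\ge 1$ (together with the cokernel feeding into degree $1$) assembles into an object $W\in\K^{\ge 1}$ receiving the required map from $Z$, with fibre in ${}^{\perp}\K^{\ge 1}$ — in effect, finite global dimension is exactly what lets the relevant truncation data of $Z$ live inside $\T^c$. For the ``only if'' direction, I would assume $(\,{}^{\perp}\K^{\ge 1},\K^{\ge 0})$ is a $t$-structure, fix a maximal ideal $\m$, and aim for $R/\m$ being perfect (equivalently $R_{\m}$ regular). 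Localising is harmless, since the putative $t$-structure restricts along $\T^c\to\Der{\rmod{R_\m}}^c$ by the support arguments used in the proof of Theorem~\ref{thm:perfect_co-t-str}; so assume $R$ local. Applying the truncation triangle to a well-chosen test complex — a brutal truncation of the Koszul complex $K_\m$, or the two-term complex $R\xrightarrow{x}R$ for suitable $x\in\m$ — and reading the long exact cohomology sequence, $R/\m$ is forced to occur as (a summand of a cohomology module of) an object of $\K^{\ge 1}$ or of ${}^{\perp}\K^{\ge 1}$, both of which lie in $\T^c$; hence $R/\m$ is perfect. Running this for every maximal ideal gives that $R$ is regular.

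The hard part will be the precise homological bookkeeping in the two quantitative steps: over a non-regular ring the good (cohomological) truncation functors fail to preserve perfect complexes, and one must identify exactly which perfect complex witnesses this and translate that failure into the non-existence of an approximation triangle for the pair $({}^{\perp}\K^{\ge 1},\K^{\ge 0})$ — choosing the right test object, and verifying that ${}^{\perp}\K^{\ge 1}$ really is the aisle with all its closure properties, is where the real work lies. The finite Krull dimension hypothesis enters only at the very end, to upgrade ``every residue field has finite projective dimension'' to genuine regularity of $R$.
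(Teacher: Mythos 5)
Your overall strategy -- reducing to $(\K^{\ge n},\K^{\le n})$ via Corollary~\ref{cor:perfect_co-t-str}, exchanging left and right adjacency through the self-duality $\RHom R-R$ of $\T^c$, and relating the existence of approximation triangles to finite projective dimension of finitely generated modules -- is the same as the paper's. However, there is a real gap in your ``only if'' step. You argue that $R/\m$ is forced to appear as a summand of a cohomology module of an object of $\T^c$ and conclude that $R/\m$ is perfect. That inference is false: cohomology modules of perfect complexes need not be perfect. Over any local noetherian ring the residue field $R/\m$ is a cohomology module of the Koszul complex on a generating set of $\m$, which is always perfect, yet $R/\m$ has finite projective dimension only when $R$ is regular -- exactly the conclusion you are trying to reach, so the argument is circular as stated. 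The localization step is also invoked without justification (it is not clear that the putative $t$-structure on $\T^c$ restricts to one on $\Der{\rmod{R_\m}}^c$) and is in fact unnecessary.

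The mechanism that actually closes the gap, and which the paper uses implicitly in its ``straightforward'' remark, is the \emph{uniqueness} of $t$-structure truncation triangles. The aisle and coaisle of the candidate adjacent $t$-structure on $\T^c$ are contained in the aisle and coaisle of the canonical $t$-structure $(\Dle n,\Dge n)$ on $\Der\ModR$; hence, if the $t$-structure on $\T^c$ exists, its approximation triangle of any perfect $Z$ is the canonical truncation triangle of $Z$ and both truncations must lie in $\T^c$. Taking $Z$ to be the two-term complex $F_1\to F_0$ of a finite presentation of an arbitrary $M\in\rfmod R$, suitably shifted, the relevant truncation is a shift of $M$ itself, which is therefore forced to be perfect, i.e.\ $\pd RM<\infty$. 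This replaces the Koszul-complex and localization bookkeeping. Finally, your remark about where finite Krull dimension enters is reversed: the implication ``$\pd R{R/\p}<\infty$ for all $\p$ implies $R$ regular'' needs no dimension hypothesis; $\Kd R<\infty$ is needed in the converse direction, to get a uniform bound $\mathrm{gl.dim}\,R\le\Kd R$ from regularity so that cohomological truncations of perfect complexes remain perfect.
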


\begin{proof}
We have $(\A_0,\B_0) = (\K^{\ge n},\K^{\le n})$ for some $n \in \Z$ by Corollary~\ref{cor:perfect_co-t-str}. Hence the right adjacent $t$-structure, if it exists, must be $(\K^{\le n},\Dge n \cap \T^c)$. It is straightforward to show that this indeed is a $t$-structure \iff every finitely generated module $M \in \rfmod R$ has finite projective dimension.

It is a standard fact that the latter happens \iff $R$ is regular. Namely, if $\pd R{R/\p} < \infty$ for each $\p \in \Spec{R}$, then also $\pd{R_\p}k(\p) < \infty$, where $k(\p)$ is the residue field of $R_\p$. Thus $R_\p$ is regular for each $\p$ by the proof~\cite[Theorem 19.12]{E}, and $R$ is regular by definition~\cite[p. 157]{MATSUMURA}.

If on the other hand $R$ is regular, denote by $d$ the Krull dimension of $R$. The global dimension of each localization $R_\p$ is then bounded by $d$; see~\cite[19.9 and 18.2]{E}. Since localization at $\p$ is exact and projectivity of a finitely generated module is a local property by~\cite[Theorem 19.2]{E}, it follows that the projective dimension of each $M \in \rfmod{R}$ is bounded by $d$.

The result for left adjacent $t$-structures follows since $\T^c$ is self-dual by Lemma~\ref{lem:duality_compacts}.
\end{proof}

\bibliographystyle{abbrv}
\bibliography{co-t-str}

\end{document}